\let\sssum\sum
\let\bigcuppp\bigcup
\let\bigsqcuppp\bigsqcup
\renewcommand{\epsilon}{\varepsilon}
\renewcommand{\bigcup}{\mathlarger{\bigcuppp}}
\renewcommand{\bigsqcup}{\mathlarger{\bigsqcuppp}}
\renewcommand{\sum}{\mathlarger{\sssum}} 
\let\originalleft\left 
\let\originalright\right
\renewcommand{\left}{\mathopen{}\mathclose\bgroup\originalleft}
\renewcommand{\right}{\aftergroup\egroup\originalright}
\newcommand{\sumprime}{\sideset{}{'}\sum}
\newcommand{\lv}{\left\Vert}
\newcommand{\dd}{\mathrm{d}}
\newcommand{\rv}{\right\Vert}
\newcommand{\lb}{\left[}
\newcommand{\rb}{\right]}
\newcommand{\lp}{\left(}
\newcommand{\rp}{\right)}
\newcommand{\lbr}{\left\lbrace}
\newcommand{\rbr}{\right\rbrace}
\newcommand{\lev}{\left\Vert}
\newcommand{\rev}{\right\Vert}
\newcommand{\hsp}{\hspace{0.1cm}}
\newcommand{\shsp}{\hspace{0.05cm}}
\newcommand{\R}{{\bf R}}
\newcommand{\Z}{{\bf Z}}
\newcommand{\SL}{\mathrm{SL}}
\newcommand{\SP}{\mathrm{Sp}}
\newcommand{\vol}{\operatorname{Vol}}
\newcommand{\diag}{\operatorname{diag}}
\newcommand{\cum}{\operatorname{cum}}
\newcommand{\0}{{\bf 0}}
\newcommand{\Y}{{Y_{2d}}}
\newcommand{\rvline}{\hspace*{-\arraycolsep}\vline\hspace*{-\arraycolsep}}
\newcommand\blfootnote[1]{%
  \begingroup
  \renewcommand\thefootnote{}\footnote{#1}%
  \addtocounter{footnote}{-1}%
  \endgroup
}
\numberwithin{equation}{section}
\newtheoremstyle{sltheorem}
{1.2em}                
{1.2em}                
{\slshape}        
{}                
{\scshape}       
{.}               
{.5em}               
{}                
\theoremstyle{sltheorem}
\newtheorem{theorem}{Theorem}
\newtheorem{definition}[theorem]{Definition}
\newtheorem{prop}[theorem]{Proposition}
\newtheorem{lemma}[theorem]{Lemma}
\numberwithin{theorem}{section}
\definecolor{shadethmcolor}{cmyk}{0,0,0,0}
\definecolor{shaderulecolor}{cmyk}{0,0,0,1} 
\begin{document}
\thispagestyle{empty}
\blfootnote{\textit{Keywords:} Space of lattices, symplectic group, exponential mixing} 
\blfootnote{\textit{MSC subject classification:} 11P21, 60F05 (Primary), 37A25 (Secondary) \\} 
\begin{center}
\noindent
\textbf{A CENTRAL LIMIT THEOREM FOR COUNTING FUNCTIONS \\ RELATED TO SYMPLECTIC LATTICES AND BOUNDED SETS}\\ \vspace{0.3cm}
\textsc{Kristian Holm}\footnote{\textsc{Mathematisches Seminar, Christian-Albrechts-Universität zu Kiel, 24118 Kiel, Germany}} \quad \quad \today \\
\end{center} 
\begin{adjustwidth}{2em}{2em}
\begin{small}
\textbf{Abstract.} We use a method developed by Björklund and Gorodnik to show a central limit theorem (as $T$ tends to $\infty$) for the counting functions $\# \lp \Lambda \cap \Omega_T \rp$ where $\Lambda$ ranges over the space $\Y$ of symplectic lattices in $\R^{2d}$ ($d \geqslant 4$). Here $\lbr \Omega_T \rbr_T$ is a certain family of bounded domains in $\R^{2d}$ that can be tessellated by means of the action of a diagonal semigroup contained in $\SP (2d, \R)$. In the process we obtain new $L^p$ bounds on a certain height function on $\Y$ originally introduced by Schmidt.
\end{small}
\end{adjustwidth}
\section{Introduction}
The interplay between lattices and subsets of their ambient Euclidean space has always been a central topic in the geometry of numbers. For example, Minkowski's classic lattice point theorem \cite[Theorem III.2.2]{cassels} gives conditions under which it is always possible to find non-zero lattice points in convex, centrally symmetric sets, and in the opposite direction the Minkowski-Hlawka theorem \cite[Corollary VI.3.2]{cassels} gives conditions on a symmetric set ensuring that at least some lattice will have to avoid it. \par In 1945, Siegel generalized the Minkowski-Hlawka theorem in his seminal paper \cite{siegel} and arguably created a kind of probabilistic geometry of numbers where questions about lattices and Euclidean sets can be studied from the perspective of a \textit{random lattice}. Namely, if $\Omega \subset \R^{n}$ is any measurable set, Siegels mean value theorem \cite{siegel} says that on average, a unimodular lattice $\Lambda$ of dimension $n$ has $\vol (\Omega)$ non-zero points in common with $\Omega$. (Such a statement of course presupposes the existence of a measure on the set $X_n$ of unimodular lattices in dimension $n$. See \textsc{Section 2} for more details.) In the same vein, Rogers \cite{rogers55} later generalized Siegel's theorem and proved formulas for the first $k$ moments ($k < n$) of the counting functions $\# (\Lambda \cap \Omega)$. \par In light of Siegel's theorem, a natural question is to what extent the function $\# (\Lambda \cap \Omega)$ can vary around its mean as $\Omega$ varies across some prescribed family of sets. In studying this problem, Schmidt employed Rogers' moment formula and observed \cite{schmidt60} that for almost all lattices (not necessarily unimodular), the counting functions $\# (\Lambda \cap \Omega_T)$ (where $\lbr \Omega_T \rbr$ is an increasing family of sets whose volumes constitute an unbounded sequence) enjoy rather sharp bounds. Essentially, Schmidt proved \cite[Theorem 1]{schmidt60} that for almost all lattices,
 \begin{align}\label{schmidtcalledit}
\# \lp \Lambda \cap \Omega_T \rp \cdot d \lp \Lambda \rp = \vol  \lp \Omega_T \rp + O \lp d \lp \Lambda \rp \vol \lp \Omega_T \rp^{1/2} \lp \log \frac{ \vol \lp \Omega_T \rp}{d(\Lambda)} \rp^2 \rp, \quad T \longrightarrow \infty,
\end{align} 
where $d (\Lambda) := \vol \lp \R^n / \Lambda \rp$ denotes the covolume of $\Lambda$. (Schmidt's result is slightly more general than this, but we simplified it for the sake of the present exposition.)\par 
Continuing with the probabilistic perspective, we note that (\ref{schmidtcalledit}) can be understood as something between a \textit{law of large numbers} (giving the main term) and a \textit{law of the iterated logarithm} (giving an error term). Indeed, suppose for simplicity that we specialize to unimodular lattices and partition the set $\Omega_T$ into $N=N(T)$ pieces of equal volume $V$: $\Omega_T \approx S_1 \sqcup \cdots \sqcup S_N$. By taking indicator functions of these sets, we then expect to have
 \begin{align*}
\# \lp \Lambda \cap \Omega_T \rp \approx \sum_{m = 1}^{N} \# \lp \Lambda \cap S_m \rp.
\end{align*} 
If we assume that the individual counting functions $\# \lp \Lambda \cap S_m \rp$ are "sufficiently (pairwise) independent" as random variables on the space of lattices, the classic law of large numbers and the law of the iterated logarithm imply that we can expect (almost surely)
 \begin{align*}
\frac{1}{N} \sum_{m = 1}^{N} \# \lp \Lambda \cap S_m \rp = V + O_\varepsilon \lp N^{-1/2 + \varepsilon} \rp, \quad N \longrightarrow \infty,
\end{align*} 
and hence
 \begin{align*}
\# \lp \Lambda \cap \Omega_T \rp = \sum_{m = 1}^{N} \# \lp \Lambda \cap S_m \rp = N V + O_\varepsilon \lp N^{1/2 + \varepsilon} \rp = \vol \lp \Omega_T \rp + O_\varepsilon \lp \vol \lp \Omega_T \rp^{1/2 + \varepsilon} \rp
\end{align*} 
as $T \longrightarrow \infty$, a statement quite similar to (\ref{schmidtcalledit}). 
\par The fact that such a probabilistic interpretation of Schmidt's theorem is possible motivates the question if any other classic probabilistic theorems have equivalents in terms of lattice point counting. This was answered in the affirmative by Björklund and Gorodnik who proved central limit theorems (CLT's) in the case of unimodular lattices for the sequence $\# \lp \Lambda \cap \Omega_T \rp$; in the first case with the family $\lbr \Omega_T \rbr$ being defined in terms of certain conditions coming from Diophantine approximation \cite{BG2}, and in the second case for a different family of sets defined in terms of products of linear forms \cite{BG3}. These results are in accordance with the fact that Schmidt's result remains valid if one specializes to the nullset of all unimodular lattices, a fact that follows from Schmidt's original proof. \par 
In a recent paper \cite{kelmer}, Kelmer and Yu consider the even smaller set $\Y$ of all symplectic lattices in $\R^{2d}$ (see \textsc{Section 2} for the definition) and prove a symplectic version of (a special case of) the moment formula of Rogers \cite{rogers55}, proving a mean square bound for the discrepancy and verifying that Schmidt's metrical bound continues to hold. Before the work of Kelmer--Yu, Athreya and Konstantoulas \cite{athreya} also studied the case of symplectic lattices (in fact, the larger class of general symplectic lattices) and proved a Rogers-type second moment formula to obtain both metrical and mean square bounds as well. However, the methods used in \cite{athreya} and \cite{kelmer} are very different. \par 
A notable difference between the symplectic mean square bounds of \cite{athreya, kelmer} and the analogous bound in the case of unimodular lattices is that, in the case of unimodular lattices, Rogers' formula gives an exact value for the second moment of the discrepancy. By contrast, an exact value for the second moment of the discrepancy is not known in the symplectic case. This is due to a number of technical obstacles that arise from the symplectic structure and render the moment formulas more complicated and less explicit. On the other hand, Siegel's mean value theorem still holds on the space $\Y$, so the mean of the random variables $\# \lp \Lambda \cap \Omega_T \rp$ over $\Y$ is known, and even equals the mean value taken over the larger space of unimodular lattices. In light of the results from \cite{BG2, BG3}, these considerations motivate the study of the distribution of the counting functions $\Lambda \longmapsto \# \lp \Lambda \cap \Omega_T \rp$ where $\Lambda$ is a symplectic lattice chosen uniformly at random from $\Y$. In this article, our goal is to show that, just as in the special linear case discussed above, even in the symplectic case one has a central limit theorem for the sequence $\# \lp \Lambda \cap \Omega_T \rp$ where $\lbr \Omega_T \rbr$ is a specific family of sets. We now describe this family in more detail. \par 
Our concrete family of sets is given as
 \begin{align*}
\Omega_T = \left\lbrace (\textbf{x}, \textbf{y}) \in \R^{d} \times \R^d : 1 \leqslant \Vert \textbf{x} \Vert \cdot \Vert \textbf{y} \Vert \leqslant 2 \text{ and } 1 \leqslant \Vert \textbf{y} \Vert < T \right\rbrace,
\end{align*} 
where we emphasize that the bounds on $\Vert \textbf{x} \Vert \cdot \Vert \textbf{y} \Vert$ are completely arbitrary and could be replaced with any positive real numbers. We note that "such sets" appear in a natural way when studying statistical properties of Diophantine approximation, cf. \cite{BG2}. However, we make no use of any special properties of this family other than the following facts, valid for every $T > 0$:
\begin{itemize}
\item[1)] $\Omega_T$ admits an approximate tessellation of the form $\Omega_T \approx a \Omega_2 \cup a^2 \Omega_2 \cup \cdots \cup a^{m(T)} \Omega_2$ where $a$ is a symplectic diagonal matrix, and $m(T)$ is a suitable integer depending on $T$. In particular, for $T = 2^N$ ($N \geqslant 1$ an integer), $\Omega_T$ coincides exactly with such a union;
\item[2)] $\Omega_T$ is symmetric in the sense that $-\Omega_T = \Omega_T$ and does not meet arbitrarily small neighbourhoods of $0$. (The symmetry condition facilitates the use of the symplectic version of Rogers' formula \cite[Theorem 1]{kelmer}.)
\end{itemize}
Given this family, we prove the following theorem.
\begin{theorem}\label{maintheorem} 
Suppose $d \geqslant 4$, and let $\Lambda \in \Y$ be distributed according to the probability measure $\mu$ coming from the restriction of Haar measure on $\SP( 2d, \R)$ (see \textsc{Section 2}). Then, there exists $\sigma \geqslant 0$ such that as $T \longrightarrow \infty$,
 \begin{align*}
\frac{\# (\Lambda \cap \Omega_T) - \vol\left(\Omega_T \right)}{\vol \left( \Omega_T \right)^{1/2}} \Longrightarrow N \lp 0, \sigma^2\rp.
\end{align*} 
\end{theorem}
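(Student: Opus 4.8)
The plan is to follow the Björklund–Gorodnik method, which reduces a central limit theorem for a sum of "almost independent" random variables to two ingredients: (i) a quantitative decay of correlations (mixing) for the dynamical system underlying the tessellation, and (ii) uniform moment bounds on the truncated counting functions. Concretely, for $T = 2^N$ write
\begin{align*}
\# (\Lambda \cap \Omega_T) = \sum_{k=1}^{N} f(a^k \Lambda), \qquad f(\Lambda) := \# (\Lambda \cap \Omega_2),
\end{align*}
where $a \in \SP(2d,\R)$ is the diagonal element implementing the tessellation in property 1). Thus the counting function is a Birkhoff-type sum along the $\Z$-action generated by $a$ on $(\Y, \mu)$. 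Since the map $\Lambda \mapsto a\Lambda$ is exponentially mixing on $\Y$ (as a homogeneous action of a one-parameter diagonal subgroup of the semisimple group $\SP(2d,\R)$, via the spectral gap / decay of matrix coefficients), after centering one expects the normalized sums to obey a CLT with limiting variance $\sigma^2 = \sum_{k \in \Z} \operatorname{Cov}(\tilde f, \tilde f \circ a^k)$, provided this series converges — which in turn requires controlling how badly $f$ fails to be in $L^2$.

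\emph{The main obstacle} is precisely that $f(\Lambda) = \#(\Lambda \cap \Omega_2)$ is unbounded and only barely integrable: a lattice with a very short vector contributes a large spike, so $f$ grows like Schmidt's height function $\alpha(\Lambda) = \sup_{v \in \Lambda \setminus \{0\}} \|v\|^{-?}$ on the cusp of $\Y$. To run the method one needs $f \in L^p(\Y, \mu)$ for $p$ strictly larger than what the naive Siegel/Rogers bound gives, and this is exactly the point of the "new $L^p$ bounds on a certain height function on $\Y$ originally introduced by Schmidt" advertised in the abstract. So the first real step is to establish $\int_{\Y} \alpha(\Lambda)^p \, d\mu(\Lambda) < \infty$ for a range of $p$ large enough (this is presumably where the hypothesis $d \geqslant 4$ enters — the integrability exponent improves with dimension), using reduction theory on $\SP(2d,\R)$ together with the symplectic Rogers formula of Kelmer–Yu. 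Given this, one truncates: replace $f$ by $f_{\leqslant R} := f \cdot \mathbbm{1}_{\{\alpha(\Lambda) \leqslant R_N\}}$ for a slowly growing cutoff $R_N$, show the tail $\sum_k (f - f_{\leqslant R_N})(a^k \Lambda)$ is negligible in $L^1$ after normalization by $\vol(\Omega_T)^{1/2} \asymp 2^{N/2}$, and work with the bounded, smoothable pieces.

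Next I would \textbf{verify the CLT for the truncated Birkhoff sums} via the method of moments (cumulants): one shows that all normalized cumulants of order $\geqslant 3$ of $\sum_{k=1}^N \tilde f_{\leqslant R_N}(a^k\Lambda)$ tend to $0$, while the second cumulant converges to $\sigma^2$. This is the technical heart of the Björklund–Gorodnik machinery: the higher cumulants are expressed as sums over $k$-tuples of the dynamical correlations $\cum(\tilde f \circ a^{k_1}, \dots)$, and exponential mixing — applied after suitable smoothing of the indicator $\mathbbm{1}_{\Omega_2}$, controlling the Sobolev norms in terms of the truncation level $R_N$ — makes these sums $o(N^{k/2})$. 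One must be careful that the mixing rate beats the polynomial-in-$R_N$ growth of the smoothed norms, which dictates how slowly $R_N \to \infty$ is allowed to grow; the $L^p$ bound from the first step is what guarantees the tail estimate survives this choice. Finally, undo the two approximations: first the discretization $T = 2^N \to$ general $T$ (handled by property 1), monotonicity of the counting function in $T$, and the fact that $\vol(\Omega_T)$ varies polynomially so that a single dyadic block is $o(\vol(\Omega_T)^{1/2})$), and second the passage from the smoothed counting function back to the genuine $\#(\Lambda \cap \Omega_T)$ (a standard sandwiching between inner and outer smoothings, with the error controlled again by the $L^p$ height bound). The degenerate case $\sigma = 0$ is allowed in the statement, so no nondegeneracy argument is needed; one simply notes that the limit is the (possibly trivial) Gaussian $N(0, \sigma^2)$.
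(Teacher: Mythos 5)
Your overall strategy — tessellate $\Omega_{2^N}$ into $N$ dyadic translates, rewrite the counting function as a Birkhoff sum of the Siegel transform $\widehat{\chi_2}$ along the diagonal flow, prove $L^p$ bounds on Schmidt's $\alpha$ by reduction theory, truncate and smooth to enter the Björklund–Gorodnik cumulant machinery, and finally pass from dyadic $T$ to general $T$ — is indeed the path the paper takes, and all the key ingredients are recognizably present.

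However, you have miscalculated $\vol(\Omega_T)$ in a way that, if taken at face value, breaks the argument. Integrating in polar coordinates gives $\vol(\Omega_T) = \frac{c_d^2(2^d-1)}{d}\log T$, so $\vol(\Omega_{2^N}) = N\cdot\vol(\Omega_2) \asymp N$: each of the $N$ tiles contributes a constant volume. The normalizing factor is therefore $\vol(\Omega_T)^{1/2}\asymp\sqrt{N}$, not $2^{N/2}$ as you write; and $\vol(\Omega_T)$ grows logarithmically in $T$, not polynomially. With an exponential normalization, the centered sum would trivially converge to $0$, not to a Gaussian, so the $\asymp 2^{N/2}$ claim contradicts the theorem you are trying to prove (as well as your own formula for $\sigma^2$, which presupposes $\sqrt{N}$ normalization). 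The correct $\sqrt{N}$ scale is also exactly what makes the truncation step delicate: you must choose the cutoff $L = L(N)$ and smoothing parameter $\varepsilon = \varepsilon(N)$ so that $\sqrt{N}\,(\,L^{-(d-2)/2} + \varepsilon\,)\to 0$ while simultaneously keeping the Sobolev norm $S_q(\phi) \ll \varepsilon^{-q} L$ of the smoothed truncation growing slowly enough that the exponential mixing rate still wins in the cumulant bounds. Balancing these constraints forces $N = o(L^{d-2})$ together with $1 - r/2 + j(r-d) < 0$ for the exponent $j$ in $L = N^j$, which yields exactly the inequality $\frac{1}{d-2} < \frac{r-2}{2(r-d)}$, equivalent to $r(d-4)+4 > 0$. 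This is where the hypothesis $d\geqslant 4$ enters; it is not merely that "the integrability exponent improves with dimension", but a concrete competition between the $\alpha \in L^d$ bound, the $\sqrt{N}$ normalization, and the polynomial-in-$L$ blowup of the Sobolev norms.
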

\noindent \textsc{Remarks.} \par  1) We suspect that this result also holds in the cases $d = 2, 3$. However, we are unable to prove it due to technical limitations of our methods: In the case $d = 2$, the $L^2$ bound in Lemma \ref{BGeq415} becomes trivial, and we lose control of the error term in approximating the Siegel transform $\widehat{\chi_2}$  (see \textsc{Section 2}) with a smooth, compactly supported function on $\Y$; in the case $d = 3$, the inequality (\ref{apr22-ineq}) is not satisfied for $r \geqslant 4$, and we are thus unable to show that the cumulants of order $4$ and higher vanish as $T \longrightarrow \infty$, which is required due to the CLT criterion of Fréchet and Shohat (see below). The case $d = 1$ is obscured by the fact that the counting function $\# \lp \Lambda \cap \Omega_T \rp$ is not square integrable in this case. \par 
2) We believe that the limiting variance is positive, but have not been able to prove any such result. This matter is intimately connected with subtle properties of an $L^2$-isometry $\iota$ that appears in Kelmer and Yu's symplectic version \cite[Thm. 1]{kelmer} of Rogers' theorem. See \textsc{Section} 5 for more details. \par 
3) If $\Lambda$ varies over the larger space of \textit{all unimodular lattices} of dimension $d \geqslant 4$, we are able to prove that the sequence $\# \lp \Lambda \cap \Omega_T \rp$ satisfies a CLT with a strictly positive variance. This contrast to the symplectic case has to do with the fact that the $L^2$ isometry that appears in Rogers' theorem is very simple and explicit compared to $\iota$.\\ \par  
In the proof of Theorem \ref{maintheorem} we follow the arguments in \cite{BG2} closely. Our strategy for proving the theorem for a general $T$ is to first prove it for $T = 2^N$ and then show that this special case implies the theorem in its full generality. To deal with the special case $T = 2^N$, we employ the \textit{method of cumulants} (Theorem \ref{cltcriterion}), which is a CLT criterion for sequences of bounded functions due to Fréchet and Shohat. However, we cannot apply Theorem \ref{cltcriterion} directly as the functions
 \begin{align*}
\Lambda \longmapsto \frac{\# (\Lambda \cap \Omega_{2^N}) - \vol\left(\Omega_{2^N} \right)}{\vol \left( \Omega_{2^N} \right)^{1/2}} \quad \quad \lp N \geqslant 1 \rp
\end{align*} 
are unbounded on $\Y$. To remedy this, we exploit the fact that thanks to the tessellation properties of the family $\lbr \Omega_T \rbr$, the statement of Theorem \ref{maintheorem} can be given, for $T = 2^N$, in terms of $\SP(2d, \R)$-translates of the Siegel transform $\widehat{\chi_2}$ of the indicator function $\chi_2$ of $\Omega_2$, and the fact that this function can be approximated by a family of smooth and bounded functions on the space $\Y$. Concretely, if $\phi \in C_c^\infty (\Y)$ and $a \in G$ are fixed, and 
 \begin{align}\label{smoothfcts}
\psi_m := \phi \circ a^m - \int_{\Y} \phi \hsp \dd \mu, \quad \quad F_N := \frac{1}{\sqrt{N}} \sum_{m = 0}^{N-1} \psi_m,
\end{align} 
we first prove that the sequence $\lbrace F_N \rbrace$ satisfies a central limit theorem. This result is given in Theorem \ref{cltforsmoothfcts}, and we prove it in \textsc{Section 4} by appealing to Theorem \ref{cltcriterion} as $F_N$ \textit{is} bounded for each $N$. Specifically, we use a quantitative correlation estimate \cite[Thm. 1.1]{BEG} due to Björklund, Einsiedler, and Gorodnik in combination with a combinatorial technique developed by Björklund and Gorodnik in \cite{BG1} in order to analyse the cumulants $\mathrm{cum}_r \lp F_N \rp$ and prove that they vanish when $N \longrightarrow \infty$ in accordance with the hypotheses of Theorem \ref{cltcriterion}. In terms of the probabilistic heuristic about Schmidt's result above, this correlation estimate expresses the "sufficient independence" required of the counting functions corresponding to different tiles in the tessellation of $\Omega_{2^N}$. \par
In \textsc{Section 5} we will then construct a smooth, compactly supported function $\phi$ which approximates $\widehat{\chi_2}$, which will allow us to extend the central limit theorem proved in \textsc{Section 4} to our case of interest. In the case of general unimodular lattices considered in \cite{BG2}, this approximation makes use of integrability properties of a certain height function introduced by Schmidt \cite{schmidt}, namely
 \begin{align*}
\alpha \lp \Lambda \rp := \sup_V \bigg\{ d \lp V \cap \Lambda \rp^{-1} : V \cap \Lambda \text{ } \mathrm{is} \text{ } \mathrm{a} \text{ } \mathrm{lattice} \text{ } \mathrm{in} \text{ } V \bigg\},
\end{align*} 
where $V$ runs over all non-zero subspaces of the ambient Euclidean space of $\Lambda$. However, at a first glance it is not obvious whether these properties continue to hold when one considers this height function on $\Y$. In order to adapt this method to the symplectic case, we therefore also prove the following theorem which we show is optimal.
\begin{theorem}\label{aaalphaint}
The function $\alpha$ belongs to $L^p \lp Y_{2d} \rp$ for $p \leqslant d$. Consequently, for any such $p$ and $L > 0$ one has the estimate 
 \begin{align*}
\mu \lp \lbr \Lambda \in \Y : \alpha \lp \Lambda \rp \geqslant L \rbr \rp \ll_p L^{-p}.
\end{align*} 
\end{theorem}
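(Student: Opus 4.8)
The plan is to reduce the statement, via Schmidt's combinatorial description of $\alpha$ and reduction theory for $\SP\lp 2d,\R\rp$, to the convergence of an explicit integral over a Siegel fundamental set.

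First I would recall that the supremum defining $\alpha$ is attained and that $\alpha\lp\Lambda\rp = \max_{0 \leqslant j \leqslant 2d}\alpha_j\lp\Lambda\rp$, where $\alpha_j\lp\Lambda\rp := \max\lbr d\lp\Delta\rp^{-1} : \Delta \leqslant \Lambda \text{ primitive of rank } j\rbr$ and $d\lp\Delta\rp = \lv v_1 \wedge \cdots \wedge v_j\rv$ for any $\Z$-basis $v_1,\ldots,v_j$ of $\Delta$; hence it is enough to bound each $\lv\alpha_j\rv_{L^p\lp\Y\rp}$ separately. It is convenient to note in passing that $\alpha$ is comparable, uniformly on $\Y$, to $1 + \widehat{\chi_B}$ for a fixed ball $B$ about the origin, since Minkowski's theorem on successive minima gives $\widehat{\chi_B}\lp\Lambda\rp = \#\lp\lp\Lambda \setminus \lbr\0\rbr\rp \cap B\rp \asymp \prod_{i : \lambda_i\lp\Lambda\rp < 1}\lambda_i\lp\Lambda\rp^{-1} \asymp \alpha\lp\Lambda\rp$ (here $\lambda_i$ is the $i$-th successive minimum); so the theorem is equivalent to the assertion that the Siegel transform of a bounded, compactly supported function lies in $L^p\lp\Y\rp$ in the stated range.

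To prove this I would work with a Siegel fundamental set $\mathfrak{S} = K\, A_t\, \Omega$ for $\Y$, with $K \cong U(d)$ maximal compact in $\SP\lp 2d,\R\rp$, $\Omega$ a bounded fundamental domain for $\lp\SP\lp 2d,\Z\rp \cap N\rp\backslash N$ in the maximal unipotent $N$, and $A_t$ the standard positive cone attached to the root system $C_d$, consisting of the diagonal matrices $\diag\lp a_1,\ldots,a_d,a_1^{-1},\ldots,a_d^{-1}\rp$ on which the simple roots are bounded below. Putting $K$ on the left is crucial: $\alpha\lp kan\Z^{2d}\rp = \alpha\lp an\Z^{2d}\rp$ then depends only on the torus and unipotent parts, and for $g = kan \in \mathfrak{S}$ one has, uniformly,
\begin{align*}
\alpha\lp g\Z^{2d}\rp \asymp \prod_{i = 1}^{d}\max\lp a_i, a_i^{-1}\rp .
\end{align*}
The upper bound is the geometric core: any primitive sublattice $\Delta$ of rank $j$ satisfies $d\lp\Delta\rp \gg \prod_{i \leqslant j}\lambda_i\lp\Lambda\rp$ by Minkowski's second theorem together with monotonicity of successive minima under passing to sublattices, and for $g \in \mathfrak{S}$ the successive minima of $an\Z^{2d}$ are comparable to the torus entries, the bounded unipotent part only perturbing things by a constant. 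One then estimates $\int_{\Y}\alpha^p\,\dd\mu \leqslant \int_{\mathfrak{S}}\alpha\lp g\Z^{2d}\rp^p\,\dd g$, using that in these coordinates Haar measure is a decaying weight in the $a_i$ — a monomial determined by the sum of the positive roots of $C_d$ — times $\dd k\,\dd a\,\dd n$. Passing to $s_i = a_{i+1}/a_i$ (and $s_0 = a_1$), so that $a_i = s_0 \cdots s_{i-1}$, the integral factors, up to the bounded $k$- and $n$-integrations, into a product of one-variable integrals $\int_c^{\infty} s_i^{\,e_i(p)}\,\frac{\dd s_i}{s_i}$ with $e_i(p)$ affine in $p$; these converge precisely in the asserted range, the binding constraint coming from the index attached to the deepest, Lagrangian, part of the cusp, and running the same computation over the near-diagonal lattices deep in that cusp — a positive-measure set of bounded multiplicity in $\Y$ — exhibits the divergence that gives the optimality statement. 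The tail bound is then Chebyshev's inequality, $\mu\lp\lbr\alpha \geqslant L\rbr\rp \leqslant L^{-p}\lv\alpha\rv_{L^p\lp\Y\rp}^p$.

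The step I expect to be the main obstacle is the uniform comparison above together with the bookkeeping of $C_d$ reduction theory: one must check that $\alpha$ is genuinely controlled by the torus part over all of $\mathfrak{S}$ — across the chambers determined by which $a_i$ exceed $1$, and with the bounded unipotent perturbation under control — and one must pass from the Siegel set to a strict fundamental domain without the finitely many $\SP\lp 2d,\Z\rp$-overlaps disturbing the integrability exponent; the symplectic structure enters only through the shape of $A_t$ and of the sum of positive roots. A subsidiary point is the endpoint: the soft alternative of bounding $\mu\lp\lbr\alpha_j \geqslant L\rbr\rp$ by $\int_{\Y}\#\lbr\Delta\ \mathrm{primitive},\ \mathrm{rank}\ j,\ d\lp\Delta\rp \leqslant 1/L\rbr\,\dd\mu$ and invoking a symplectic Rogers-type formula of Kelmer--Yu type would lose a logarithm there, so the $L^p$-integral must be estimated directly rather than through the tail bound.
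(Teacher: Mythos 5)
Your strategy is the paper's: pass to a Siegel set for $\SP(2d,\R)/\SP(2d,\Z)$, show that on that set $\alpha$ is controlled by the diagonal (torus) factor of the Iwasawa decomposition, and then estimate the $L^p$-integral in Iwasawa coordinates against the polynomial weight coming from the Haar density, finishing with Chebyshev. The weight you describe (the monomial attached to the sum of the positive roots of $C_d$) agrees with the paper's computation of $\dd(an) = \lp \prod_{i=1}^d a_i^{2(d-i)+1}\rp \dd n\,\dd a_1 \cdots \dd a_d$, and the critical exponent you locate at the Lagrangian direction of the cusp matches the paper's computation, where the binding constraint occurs at $i=d$ with resulting exponent $-1$ precisely when $p = d+1$. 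Where you diverge is in the geometric core: you obtain $\alpha(kan) \asymp \prod_i \max\lp a_i, a_i^{-1}\rp$ on the Siegel set from Minkowski's second theorem together with the monotonicity of successive minima under passing to sublattices, whereas the paper splits this into two explicit lemmas — a one-sided comparison $\alpha(kan) \ll \alpha(a)$ proved via a polynomial covolume-distortion function $\beta$ borrowed from Bekka--Mayer, and a direct wedge-product computation (Lemma \ref{shapeoflattice}) identifying the minimal-covolume rank-$r$ subgroup of $a\Z^{2d}$. Both are sound; yours is more conceptual, the paper's more elementary and self-contained. Both routes also rest on the same nontrivial input that your proposal takes as given: an explicit Siegel set of the form ${\bf K}{\bf A}_t{\bf N}_u$ for the symplectic quotient, which the paper establishes separately (Proposition \ref{siegeldroid}, proved in the appendix by an inductive reduction-theory argument) because a citable reference was not available. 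Finally, your closing observation — that the soft route of bounding $\mu\lp\lbr\alpha_j \geqslant L\rbr\rp$ by a Rogers/Kelmer--Yu-type first moment loses a logarithm at the endpoint, so one must bound the $L^p$-integral directly rather than deduce it from a tail estimate — is consistent with the paper's treatment, which computes the integral directly.
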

By taking $V$ to be a one-dimensional subspace in the definition of $\alpha$, we see that $\alpha ( \Lambda)$ is at least equal to the inverse length of the shortest non-zero vector in $\Lambda$. Mahler's compactness theorem \cite[Chap. V, Thm. IV]{cassels} therefore implies that the preimage of any compact subset of $\Y$ under the height function $\alpha$ is, itself, compact. The $L^p$ bounds provided by Theorem \ref{aaalphaint} therefore give us as much control over this function as one could hope for. Since the Siegel transform $\widehat{\chi_2}$ that we wish to truncate is bounded when $\alpha$ is small, the fact that $\alpha$ is only large on a small subset of $\Y$ is a key ingredient in constructing the approximation $\phi$ that allows us to reduce Theorem \ref{maintheorem} to Theorem \ref{cltforsmoothfcts}. We prove Theorem \ref{aaalphaint} in \textsc{Section 3}. 
\section{Preliminaries}
We let $J = J_{2d}$ be the standard skew-symmetric matrix
 \begin{align*}
J_{2d} = \lp 
\begin{matrix}
0 & I_d \\
-I_d & 0
\end{matrix} 
\rp
\end{align*} 
where $I_d$ denotes the $d \times d$ identity matrix. Any $2d \times 2d$ matrix $A$ that satisfies $A^\intercal J A = J$ is called a \textit{symplectic} matrix. We will denote by $G = \SP (2d, \R)$ the group of all real symplectic matrices of dimension $2d \times 2d$, or the \textit{real symplectic group}. (Some authors use a different skew-symmetric reference matrix than $J$ to define symplecticity. The group $G$ is independent of this choice.)\par 
In the usual way, $J$ gives rise to an anti-symmetric bilinear form on $\R^{2d}$ given by $\omega (\textbf{x}, \textbf{y} ) = \textbf{x}^\intercal J \textbf{y}$. If $\Lambda \subset \R^{2d}$ is a unimodular lattice, we say $\Lambda$ is \textit{symplectic} if the restriction of $\omega$ to $\Lambda \times \Lambda$ takes values in $\Z$. Denoting the space of all symplectic $2d$-dimensional lattices by $Y_{2d}$, one can show that the group $G$ acts transitively on $Y_{2d}$ with stabilizer $\Gamma = \SP(2d, \Z)$ (see Proposition \ref{GtransonSym}). Consequently, the space $Y_{2d}$ can be realized as the coset space $G/\Gamma$. As both $G$ and $\Gamma$ are unimodular groups, the quotient inherits a $G$-invariant measure $\mu$ from the Haar measure on $G$. Moreover, this measure is finite (see e.g. \cite{morris}), so by normalizing it to ensure $\mu (\Y) = 1$ we can realize the set of symplectic lattices as a probability space. \par
In the following we take $T = 2^N$ with $N \geqslant 1$ an integer. As mentioned in the introduction, and as will be proved in \textsc{Section 5}, this comes at no loss of generality. The advantage of specializing to the subsequence $2^N$ is that by doing so, we can conveniently tessellate the set $\Omega_T = \Omega_{2^N}$ and exploit this to describe the function $\# \lp \Lambda \cap \Omega_{T} \rp = \# \lp \Lambda \cap \Omega_{2^N} \rp$ as a sum of \textit{Siegel transforms}.
\subsection{Reformulation in Terms of the Siegel Transform}
We now define the Siegel transform of a compactly supported function on Euclidean space $\R^{n}$ and show that the function $\# \lp \Lambda \cap \Omega_{2^N} \rp$ may be expressed in terms of the Siegel transform $\widehat{\chi_2}$ of the characteristic function $\chi_2$ of $\Omega_2$. 
\begin{definition}
Let $f : \R^{n} \longrightarrow \R$ be a measurable function of compact support. Its Siegel transform is the function $\widehat{f}$ on the space $X_n$ of unimodular lattices in $\R^n$ given by
 \begin{align*}
\widehat{f} : X_{n} \longrightarrow \R , \quad \quad \widehat{f}(\Lambda) = \sum_{\textbf{v} \in \Lambda \atop \textbf{v} \neq 0} f(\textbf{v}).
\end{align*} 
\end{definition}
We note that the condition on the support of $f$ ensures that the function $\widehat{f}$ takes finite values and so is well-defined. However, even if $f$ is a bounded function, its Siegel transform $\widehat{f}$ is typically not bounded on $X_n$. We will return to this matter later. \par 
If $\chi_T$ denotes the indicator function of the set $\Omega_T$, we now have
 \begin{align*}
\# \lp \Lambda \cap \Omega_{2^N} \rp = \widehat{\chi_{2^N}}\lp \Lambda \rp.
\end{align*} 
Furthermore, $\Omega_{2^N}$ can be tessellated by means of $G$-translations of the set $\Omega_2$. More specifically, for $t > 0$ we let
 \begin{align*}
a_t = \lp \begin{matrix} e^{t}I_d & 0 \\
0 & e^{-t} I_d
\end{matrix} \rp \in G,
\end{align*} 
so that with $b = a_{\log 2}$,
 \begin{align}\label{omegadecomp}
\Omega_{2^N} = \bigsqcup_{m = 0}^{N-1} b^{-m}\lp \Omega_2 \rp.
\end{align} 
Taking the Siegel transform of the indicator functions of both sides, we get 
 \begin{align*}
\widehat{\chi_{2^N}} (\Lambda)= \sum_{\textbf{v} \in \Lambda \atop \textbf{v} \neq 0} \chi_{2^N} (\textbf{v}) = \sum_{\textbf{v} \in \Lambda \atop \textbf{v} \neq 0} \sum_{m = 0}^{N-1} \chi_{2} \lp b^m (\textbf{v} ) \rp = \sum_{m = 0}^{N-1} \sum_{\textbf{v} \in \Lambda \atop \textbf{v} \neq 0} \chi_{2} \lp b^m (\textbf{v} ) \rp = \sum_{m=0}^{N-1} \widehat{\chi_2} \circ b^m.
\end{align*} 
By the symplectic version of Siegel's mean value theorem \cite[Theorem 2]{mosko} and the $G$-invariance of $\mu$, we now obtain
 \begin{align*}
\vol \lp \Omega_{2^N} \rp = \int_{\Y} \sum_{m=0}^{N-1} \widehat{\chi_2} \lp b^m (\Lambda ) \rp \hsp \dd \mu(\Lambda) = \sum_{m = 0}^{N-1} \vol \lp \Omega_2 \rp = N \cdot \vol \lp \Omega_2 \rp.
\end{align*} 
It follows that, for $T = 2^N$, the convergence in distribution claimed in Theorem \ref{maintheorem} is equivalent to 
 \begin{align}\label{thm1equiv}
\frac{1}{\sqrt{N}} \lp \sum_{m = 0}^{N-1} \widehat{\chi_{2}} \circ b^m - N \cdot \vol \lp \Omega_2 \rp \rp \Longrightarrow N(0, c^2\sigma^2)
\end{align} 
for $c = \vol \lp \Omega_2 \rp^{1/2}$. Hence, our main objective now is to prove (\ref{thm1equiv}).
\subsection{Cumulants and the Criterion of Fréchet and Shohat}
In order to prove (\ref{thm1equiv}), we use a CLT criterion due to Fréchet and Shohat. First, we need to introduce the notion of a \textit{cumulant.} 
\begin{definition}\label{cumdef} Let $(X, \nu)$ be a probability space. For bounded and measurable functions $f_1, \ldots, f_r$ on $X$, their joint cumulant of order $r$ is
 \begin{align*}
\cum_r \lp f_1, \ldots, f_r \rp = \sum_{\mathcal{P}} (-1)^{\# \mathcal{P}-1} (\# \mathcal{P} - 1)! \prod_{I \in \mathcal{P}} \hspace{0.05cm} \int_X \prod_{i \in I} f_i \hsp \dd \nu,
\end{align*} 
where the first sum is taken over the set of all partitions $\mathcal{P}$ of the set $\lbrace 1, \ldots, r \rbrace$. \par If $\mathcal{Q}$ is a partition of $\lbr 1, \ldots, r \rbr$, the conditional joint cumulant of order $r$ of the functions $f_1, \ldots, f_r$ is defined as
 \begin{align*}
\cum_r \lp f_1, \ldots, f_r \mid \mathcal{Q} \rp = \sum_{\mathcal{P}} (-1)^{\# \mathcal{P} - 1} (\# \mathcal{P} - 1 )! \prod_{I \in \mathcal{P}} \prod_{J \in \mathcal{Q}}  \hspace{0.05cm} \int_{X} \prod_{i \in I \cap J} f_i \hsp \dd \nu 
\end{align*} 
where $\mathcal{P}$ ranges over all partitions of $\lbr 1, \ldots, r \rbr$. \par 
Finally, if $f$ is a bounded and measurable function on $X$, we let
 \begin{align*}
\cum_r \lp f \rp = \cum_r (f, \ldots, f).
\end{align*} 
\end{definition}
\noindent \textsc{Remark.} It is not hard to see that $\cum_r$ is an $r$-linear functional on the space of bounded, measurable functions on $X$. (See e.g. \cite[Prop. 4.2]{speed}.) \\\\
With this definition in place, we can state the criterion of Fréchet and Shohat.
\begin{theorem}[{\cite{frechetshohat}}]\label{cltcriterion} Let $(X, \nu)$ be a probability space. Let $\lbrace F_T \rbrace$ be a sequence of real-valued bounded measurable functions on $X$ such that 
 \begin{align*}
\int_{X} F_T \hsp \dd \nu = 0, \quad \quad \sigma^2 := \lim_{T \rightarrow \infty} \int_{X} F_T^2 \hsp \dd \nu < \infty.
\end{align*} 
Suppose that for all $r \geqslant 3$, $\cum_r \lp F_T \rp \longrightarrow 0$ as $T \longrightarrow \infty$. Then, for every $\xi \in \R$,
 \begin{align*}
\nu \lp \lbrace F_T < \xi \rbrace \rp \longrightarrow \frac{1}{\sqrt{2 \pi \sigma^2}} \int_{-\infty}^\xi \exp \lp -\frac{t^2}{2 \sigma^2} \rp \hsp \dd t
\end{align*} 
as $T \longrightarrow \infty$.
\end{theorem}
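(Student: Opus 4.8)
The plan is to prove this by the classical \emph{method of cumulants}, reducing it to the moment convergence theorem. Write $\nu_T := (F_T)_\ast \nu$ for the law of $F_T$ on $\R$ and $m_n (F_T) := \int_X F_T^n \, \dd \nu$ for its $n$-th moment, all finite since $F_T$ is bounded. The first step is to record the Möbius duality on the partition lattice between moments and cumulants: the identity defining $\cum_n$ in Definition \ref{cumdef}, specialised to $f_1 = \cdots = f_n = F_T$, is equivalent to
\begin{align*}
m_n (F_T) = \sum_{\mathcal{P}} \prod_{I \in \mathcal{P}} \cum_{\# I} (F_T),
\end{align*}
the sum over all partitions $\mathcal{P}$ of $\lbr 1, \ldots, n \rbr$. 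In particular, for each $n$ there is a universal polynomial $Q_n$ with non-negative integer coefficients, independent of $T$, such that $m_n (F_T) = Q_n \lp \cum_1 (F_T), \ldots, \cum_n (F_T) \rp$.

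Next I would pass to the limit inside $Q_n$. The hypotheses give $\cum_1 (F_T) = \int_X F_T \, \dd \nu = 0$, $\cum_2 (F_T) = \int_X F_T^2 \, \dd \nu \to \sigma^2$, and $\cum_r (F_T) \to 0$ for every $r \geqslant 3$; hence by continuity of $Q_n$,
\begin{align*}
m_n (F_T) \longrightarrow Q_n \lp 0, \sigma^2, 0, \ldots, 0 \rp =: m_n^\ast, \quad T \longrightarrow \infty,
\end{align*}
for every $n \geqslant 1$. In the sum defining $Q_n \lp 0, \sigma^2, 0, \ldots, 0 \rp$ only perfect matchings of $\lbr 1, \ldots, n \rbr$ survive, since a partition with any block of size other than $2$ contributes a monomial divisible by $\cum_1$ or by some $\cum_r$ with $r \geqslant 3$; thus $m_n^\ast = 0$ for $n$ odd and $m_n^\ast = (n - 1)!! \, \sigma^n$ for $n$ even. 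These are precisely the moments of the Gaussian $N \lp 0, \sigma^2 \rp$ (with $N(0,0)$ read as $\delta_0$), by the Wick/Isserlis formula.

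The third step is the moment convergence theorem. Since $m_2 (F_T) \to \sigma^2$, we have $\sup_T m_2 (F_T) < \infty$, so $\lbr \nu_T \rbr$ is tight by Chebyshev's inequality; let $\nu_\infty$ be a weak limit along some subsequence. As $\sup_T m_{2k} (F_T) < \infty$ for every $k$, the family $\lbr |x|^n \rbr$ is uniformly $\nu_T$-integrable for each $n$ (dominate $|x|^n$ by $1 + |x|^{2k}$ for any $2k > n$ and use Chebyshev), so moments pass to the weak limit and $\nu_\infty$ has moments $m_n^\ast$. Since $N \lp 0, \sigma^2 \rp$ has an everywhere finite moment generating function, it is determined by its moments, whence $\nu_\infty = N \lp 0, \sigma^2 \rp$. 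Every subsequential weak limit of the tight sequence $\lbr \nu_T \rbr$ therefore equals $N \lp 0, \sigma^2 \rp$, so $\nu_T$ converges weakly to $N \lp 0, \sigma^2 \rp$; rewriting this via distribution functions (using that the limiting law is continuous) gives the stated conclusion.

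The one point requiring genuine care is this last transfer from convergence of moments to convergence in distribution: extracting subsequential weak limits is free from tightness, but identifying them needs both the uniform-integrability estimate (to carry the limiting moments onto $\nu_\infty$) and the moment-determinacy of the Gaussian. One could instead try to show $\int_X e^{i t F_T} \, \dd \nu \to e^{-\sigma^2 t^2 / 2}$ directly from the cumulant expansion of the logarithm of the characteristic function, but controlling that expansion uniformly in $T$ is awkward because the sup-norm bound on $F_T$ need not be uniform in $T$; the moment route sidesteps this, and the remainder of the argument is just the combinatorial bookkeeping of the moment/cumulant duality together with the evaluation $Q_n \lp 0, \sigma^2, 0, \ldots, 0 \rp = (n-1)!! \, \sigma^n$.
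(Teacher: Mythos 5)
Your argument is correct, but note that the paper does not prove this statement at all: it is quoted verbatim from the literature with a citation to Fréchet and Shohat, so there is no in-paper proof to compare against. Your proposal is the standard self-contained derivation of that classical criterion, and it buys the reader a complete argument: Möbius inversion on the partition lattice turns the paper's Definition \ref{cumdef} into the moment--cumulant identity $m_n \lp F_T \rp = \sum_{\mathcal{P}} \prod_{I \in \mathcal{P}} \cum_{\# I} \lp F_T \rp$ (this inversion deserves a one-line justification or a pointer, e.g.\ to the Speed reference already in the bibliography), the hypotheses then force all moments to converge to the Gaussian moments $(n-1)!! \, \sigma^n$ (even $n$) and $0$ (odd $n$), and the tightness/uniform-integrability/moment-determinacy argument correctly upgrades moment convergence to weak convergence, since $N \lp 0, \sigma^2 \rp$ is moment-determinate. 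Two small points of care: your appeal to ``$\sup_T m_{2k} \lp F_T \rp < \infty$'' is legitimate precisely because each sequence $m_{2k} \lp F_T \rp$ converges (a fact you derive from the cumulant hypotheses, not from any uniform bound on $\lev F_T \rev_\infty$, which indeed need not exist); and in the degenerate case $\sigma = 0$ the limiting law is $\delta_0$, so the convergence of distribution functions holds only at $\xi \neq 0$ and the displayed Gaussian integral formula must be read in the Dirac sense, exactly as the paper itself stipulates in the remark following Theorem \ref{cltforsmoothfcts}.
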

\subsection{The Symplectic Group and Symplectic Lattices}
In this subsection we study certain Lie theoretic aspects of the group $G$ and its relationship to the space of all symplectic lattices. Specifically, in order to be able to introduce an important family of Sobolev norms on the space $C_c^\infty \lp \Y \rp$ of compactly supported, smooth functions on $\Y$ in the next subsection, we will examine the Riemannian structure on $G$ and the differential action of its Lie algebra on $C^\infty (G)$. We begin by proving that the space of symplectic lattices $Y$ coincides with the quotient $G / \Gamma$, justifying a claim that was made earlier. 
\begin{prop}\label{GtransonSym}
The natural action of $G$ on $\Y$ is transitive with stabilizer $\Gamma$. Consequently, $\Y$ may be identified with $G / \Gamma$. 
\end{prop}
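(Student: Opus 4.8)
The plan is to realize $\Y$ as a single $G$-orbit and to compute the corresponding stabilizer. Take the base point $\Lambda_0 := \Z^{2d}$, which is a symplectic lattice: it has covolume $1$, and $\omega(\mathbf{m},\mathbf{n}) = \mathbf{m}^\intercal J \mathbf{n} \in \Z$ for $\mathbf{m},\mathbf{n}\in\Z^{2d}$. The stabilizer of $\Lambda_0$ in $G$ is easy to identify: the stabilizer of $\Z^{2d}$ inside $\mathrm{GL}(2d,\R)$ is $\mathrm{GL}(2d,\Z)$, so the stabilizer inside $G$ is $G\cap\mathrm{GL}(2d,\Z)$, which is by definition the group $\Gamma = \SP(2d,\Z)$ of integral symplectic matrices. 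Once transitivity is established, the orbit--stabilizer theorem provides a $G$-equivariant bijection $\Y\cong G/\Gamma$, and this bijection is a homeomorphism by the usual open-mapping argument (the orbit map $g\mapsto g\Lambda_0$ is continuous, $G$ is $\sigma$-compact, and $\Y$ is locally compact Hausdorff). So the entire content of the proposition is transitivity.

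To prove transitivity, fix $\Lambda\in\Y$. It suffices to exhibit a $\Z$-basis $(v_1,\dots,v_{2d})$ of $\Lambda$ that is \emph{symplectic}, meaning $\omega(v_i,v_{d+i}) = 1$ for $1\leqslant i\leqslant d$ and $\omega(v_i,v_j) = 0$ for every other pair $i<j$: if $g\in\mathrm{GL}(2d,\R)$ is the matrix with these vectors as columns, then $g^\intercal Jg = J$, hence $g\in G$, and $g\Z^{2d} = \Lambda$. (The relation $g^\intercal Jg = J$ automatically forces $\det g = 1$ through the Pfaffian identity $\mathrm{Pf}(g^\intercal Jg) = \det(g)\,\mathrm{Pf}(J)$, so no orientation obstruction arises.) Now $\omega$ restricts to a $\Z$-valued alternating form on $\Lambda$ by the definition of a symplectic lattice, and this form is moreover \emph{unimodular}: writing $h\in\mathrm{GL}(2d,\R)$ for any matrix with $h\Z^{2d}=\Lambda$, the Gram matrix of $\omega|_\Lambda$ in a $\Z$-basis is $B := h^\intercal Jh$, and $\det B = (\det h)^2 = 1$ since $\Lambda$ is unimodular. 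Thus the existence of a symplectic basis of $\Lambda$ is exactly the classical normal-form theorem for unimodular alternating bilinear forms over $\Z$ (a special case of the Frobenius/elementary-divisor classification of alternating forms, where the condition $\det B=1$ forces all elementary divisors to be $1$).

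The main obstacle is therefore just this normal-form input, which for self-containedness I would prove in the present setting by induction on $d$. Let $\omega|_\Lambda$ be unimodular and $\Z$-valued; unimodularity means $v\mapsto\omega(v,\cdot)$ is an isomorphism $\Lambda\to\mathrm{Hom}(\Lambda,\Z)$, so for any primitive $e_1\in\Lambda$ the functional $\omega(e_1,\cdot)\colon\Lambda\to\Z$ is primitive, hence surjective, and one may choose $f_1\in\Lambda$ with $\omega(e_1,f_1) = 1$. A direct computation (replacing $v$ by $v - \omega(e_1,v)f_1 + \omega(f_1,v)e_1$) shows that $\Lambda = (\Z e_1\oplus\Z f_1)\oplus W$ with $W := \{v\in\Lambda : \omega(e_1,v) = \omega(f_1,v) = 0\}$, that $W$ is free of rank $2d-2$, and that $\omega|_W$ is again $\Z$-valued and unimodular; applying the inductive hypothesis to $W$ and reindexing produces the desired symplectic basis of $\Lambda$. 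Beyond bookkeeping --- chiefly keeping the indexing convention that matches the block shape of $J_{2d}$ --- I do not expect further difficulties.
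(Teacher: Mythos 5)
Your proof is correct, and it takes a slightly different --- and arguably cleaner --- route than the paper's, so the comparison is worth recording. The paper's argument is in the spirit of the general elementary-divisor reduction for integral alternating forms: it chooses $\textbf{v}$ minimizing $n_\textbf{v} := [\Z : \omega(\textbf{v},\Lambda)]$, splits off a ``near-hyperbolic'' pair $(\textbf{e}_1,\textbf{f}_1)$ with $\omega(\textbf{e}_1,\textbf{f}_1)=n_1$, orthogonalizes the remaining basis vectors using divisibility by $n_1$, and recurses; only at the very end does it bring in the unimodularity of $\Lambda$, through the determinant identity $\pm 1 = \det M = \pm\prod n_i^2$, to force all $n_i = 1$. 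You instead exploit unimodularity up front: the covolume-$1$ condition makes the Gram matrix of $\omega\vert_\Lambda$ have determinant $1$, so $v \mapsto \omega(v,\cdot)$ is an isomorphism $\Lambda \to \mathrm{Hom}(\Lambda,\Z)$, and any primitive $e_1$ immediately pairs to $1$ with some $f_1$. Your recursion then runs on the genuine orthogonal complement $W$ inside $\Lambda$ (with unimodularity of $\omega\vert_W$ inherited from the block decomposition of the Gram matrix), whereas the paper phrases its recursion in terms of $\Lambda/\mathrm{span}\{\textbf{e}_1,\textbf{f}_1\}$. Net effect: the paper's version never has to name the perfect-duality property and would apply verbatim to a non-unimodular symplectic lattice (giving a diagonal $N$ of elementary divisors), while yours uses the unimodularity where it is most useful and thereby avoids both the minimality/divisibility bookkeeping and the final determinant argument. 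The stabilizer computation and the identification $\Y \cong G/\Gamma$ are handled the same way in both, with your extra remark about the Pfaffian and the open-mapping argument being correct but not strictly needed for what the proposition asserts.
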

\begin{proof}
Let $\Lambda$ be a symplectic lattice. If $\textbf{v} \in \Lambda$ is non-zero, the set $\omega (\textbf{v}, \Lambda) \subset \Z$ is a non-zero ideal, hence equal to $n_\textbf{v} \Z$ for some positive integer $n_\textbf{v}$. Choose now a $\textbf{v} \in \Lambda$ that minimizes $n_\textbf{v}$. Then, since $\Lambda$ is discrete, there is necessarily some $\textbf{w} \in \Lambda$ such that $\omega (\textbf{v}, \textbf{w} ) = n_\textbf{v}$. By the choice of $\textbf{v}$ and $\textbf{w}$, we see that these vectors span the sublattice 
 \begin{align*}
\Pi := \mathrm{span}_{\R} \lbr \textbf{v}, \textbf{w} \rbr \cap \Lambda.
\end{align*} 
Indeed, clearly the integer span of $\textbf{v}$ and $\textbf{w}$ is a sublattice of $\Pi$. Furthermore, if $\Pi$ is not contained in $\mathrm{span}_{\Z} \lbr \textbf{v}, \textbf{w} \rbr$, then $\Pi$ must contain a rational, non-integer multiple of either $\textbf{v}$ or $\textbf{w}$, meaning that one of $\textbf{v}$ and $\textbf{w}$ is not a primitive lattice point of $\Lambda$. As this is impossible because of the minimality of $n_\textbf{v}$, the claim follows. We now let $\textbf{e}_1 = \textbf{v}$, $\textbf{f}_1 = \textbf{w}$, and $n_1 = n_\textbf{v}$. We can then complete $\lbr \textbf{e}_1, \textbf{f}_1 \rbr$ to a basis $\lbr \textbf{e}_1, \textbf{f}_1, \textbf{g}_3, \ldots, \textbf{g}_{2n} \rbr$ of $\Lambda$. It will be clear later that we in fact have $n_1 = 1$, so we will now proceed by ensuring orthogonality between $\textbf{e}_1$ (resp. $\textbf{f}_1$) and the remaining basis vectors. To this end, note that all the integers $\omega (\textbf{e}_1, \textbf{g}_i)$ and $\omega (\textbf{g}_i, \textbf{f})$ ($i = 3, \ldots, 2n$) have to be divisible by $n_1$ due to the minimality of $n_1$. This means that we can obtain a (possibly) new lattice vector by 
replacing $\textbf{g}_i$ with the integer combination
 \begin{align*}
\textbf{g}_i' := \textbf{g}_i - \frac{1}{n_1} \omega (\textbf{g}_i, \textbf{f}_1 ) \textbf{e}_1 - \frac{1}{n_1} \omega (\textbf{e}_1, \textbf{g}_i ) \textbf{f}_1.
\end{align*}  
Since the map taking $\lbr \textbf{e}_1, \textbf{f}_1, \textbf{g}_3, \ldots, \textbf{g}_{2n} \rbr$ to $\lbr \textbf{e}_1, \textbf{f}_1, \textbf{g}'_3, \ldots, \textbf{g}'_{2n} \rbr$ has determinant $1$, we see that $\lbr \textbf{e}_1, \textbf{f}_1, \textbf{g}_3', \ldots, \textbf{g}_{2n}' \rbr$ is a basis for $\Lambda$. Moreover, the construction ensures that $\omega (\textbf{e}_1, \textbf{g}_i' ) = \omega (\textbf{g}_i', \textbf{f}_1 ) = 0$ for all $i$. \par 
Continuing inductively with $\Lambda / \mathrm{span} \lbr \textbf{e}_1, \textbf{f}_1 \rbr$ in place of $\Lambda$, and so on, we thus obtain a set $\lbr \textbf{e}_1, \ldots, \textbf{e}_n, \textbf{f}_1, \ldots, \textbf{f}_n \rbr$ of lattice vectors. The fact that each pair of lattice vectors forms a basis for the sublattice of $\Lambda$ containing their integer span means that $\textbf{e}_1, \ldots, \textbf{e}_n, \textbf{f}_1, \ldots, \textbf{f}_n$ form a basis for $\Lambda$. We now let
 \begin{align*}
M = \lb \begin{matrix} \textbf{e}_1 & \cdots & \textbf{e}_n & \textbf{f}_1 & \cdots & \textbf{f}_n \end{matrix} \rb,
\end{align*} 
which has determinant $\pm 1$. By construction, the matrix $M^\intercal J M$ has the form $\lp 
\begin{smallmatrix}
0 & N \\
-N & 0
\end{smallmatrix}
\rp$ with
 \begin{align*}
N = \lp 
\begin{matrix}
n_1 & 0 & \cdots & 0 \\
0 & n_2 & \cdots & 0 \\
\vdots & \vdots & \ddots & \vdots \\
0 & 0 & \cdots & n_d
\end{matrix}
\rp.
\end{align*} 
It follows that 
 \begin{align*}
\pm 1 = \det M = \pm \prod_{i = 1}^n n_{i}^2.
\end{align*} 
Therefore, as all the $n_i$ are positive integers, we have $n_{i} = 1$ for all $i$. Therefore we even have $M^\intercal J M = J$, which means that our matrix $M$ is symplectic. \par It remains to show that the stabilizer of $\Z^{2d}$ is $\Gamma$. However, since we have
 \begin{align*}
\mathrm{stab}_{\SL (2d, \R)}\lp \Z^{2d} \rp = \SL(2d, \Z),
\end{align*} 
we immediately obtain
 \begin{align*}
\mathrm{stab}_{G}\lp \Z^{2d} \rp = \mathrm{stab}_{\SL (2d, \R)}\lp \Z^{2d} \rp \cap G = \Gamma.
\end{align*} 
This completes the proof.
\end{proof} 
The Lie group $G = \SP (2d, \R)$ carries a right-invariant Riemannian metric that descends onto the quotient space $G / \Gamma$. Moreover, since $G$ is connected, this metric even gives rise in a natural way to a right-invariant distance function $\rho_G$ on $G$, and hence a distance function on $G/ \Gamma$ which we denote by $\rho_X$. 


\begin{definition}\label{opnorm}
Let $\Vert \cdot \Vert$ be the norm on $\mathfrak{g} = \mathrm{Lie}(G)$ coming from the Riemannian metric on $G$. Then we define the operator norm of $g$ as
 \begin{align*}
\Vert g \Vert_\mathrm{op} = \max \lbr \left\Vert g X g^{-1} \right\Vert : X \in \mathfrak{g}, \, \Vert X \Vert = 1 \rbr.
\end{align*} 
\end{definition}
Our operator norm of $g$ is therefore the usual operator norm of the adjoint map $\mathrm{Ad}(g) : \mathfrak{g} \longrightarrow \mathfrak{g}$. It is not hard to see that $\Vert \cdot \Vert_\mathrm{op}$ is submultiplicative. In what follows, however, we also need to be able to estimate $\Vert \cdot \Vert_\mathrm{op}$ from below.
\begin{lemma}\label{adjointeigenvalues}
For any non-zero $t \in \R$ there exists $\lambda = \lambda \lp t, d \rp > 1$ such that for any positive integer $m$,
 \begin{align*}
\left\Vert a_t^m \right\Vert_\mathrm{op} \geqslant \lambda^m.
\end{align*} 
\begin{proof}
It is a standard fact that the Lie algebra $\mathfrak{g}$ consists of all matrices $X$ such that $X^\intercal J + J X = 0$ where $J$ is the standard skew-symmetric matrix. Thus, with
 \begin{align*}
E = \begin{pmatrix}
  0
  & \rvline & \begin{matrix}
  0 &\cdots &1\\
  \vdots &\ddots &\vdots\\
  1 &\cdots &0
\end{matrix}   \\
\hline
  0 & \rvline &
  0
\end{pmatrix},
\end{align*} 
where all blocks have size $d \times d$, and the upper rightmost block only has non-zero entries in the corners on the antidiagonal, the matrices $E$ and $E^\intercal$ are elements of $\mathfrak{g}$ and eigenvectors of $\mathrm{Ad}(a_t)$ with respective eigenvalues $e^{2t}$ and $e^{-2t}$. If $t > 0$, let $\lambda = e^{2t} > 1$ and let $F = E / \Vert E \Vert$, and if $t < 0$, let $\lambda = e^{-2t} > 1$ and $F = E^\intercal / \Vert E^\intercal \Vert$. Then, if $m$ is any positive integer, we see that
 \begin{align*}
\lv a_t^m \rv_\mathrm{op} \geqslant \lv \mathrm{Ad}\lp a_t^m \rp F \rv = \lambda^m \lv F \rv = \lambda^m > 1,
\end{align*} 
and the claim follows.
\end{proof}
\end{lemma}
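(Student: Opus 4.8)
The plan is to turn the lower bound into an eigenvalue statement for the adjoint representation. By Definition \ref{opnorm}, $\lv g \rv_\mathrm{op}$ is precisely the operator norm of $\mathrm{Ad}(g) \colon \mathfrak{g} \to \mathfrak{g}$ with respect to the Riemannian norm $\lv \cdot \rv$, and since $\mathrm{Ad}$ is a homomorphism we have $\mathrm{Ad}\lp a_t^m \rp = \mathrm{Ad}\lp a_t \rp^m$. In particular, for any unit vector $X \in \mathfrak{g}$ one has $\lv a_t^m \rv_\mathrm{op} \geqslant \lv \mathrm{Ad}\lp a_t^m \rp X \rv$. Hence it suffices to exhibit a single eigenvector of $\mathrm{Ad}\lp a_t \rp$ whose eigenvalue $\mu$ satisfies $|\mu| > 1$: then $\mathrm{Ad}\lp a_t^m \rp X = \mu^m X$, and with $\lambda := |\mu|$ we get $\lv a_t^m \rv_\mathrm{op} \geqslant |\mu|^m \lv X \rv = \lambda^m$.

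To produce such an eigenvector I would use the standard explicit description of $\mathfrak{g} = \mathfrak{sp}\lp 2d, \R \rp$, namely that it consists of the block matrices $\lp \begin{smallmatrix} A & B \\ C & -A^\intercal \end{smallmatrix} \rp$ with $B$ and $C$ symmetric $d \times d$ matrices (equivalently, $X^\intercal J + J X = 0$). Since $a_t = \diag\lp e^t I_d, e^{-t} I_d \rp$, a direct computation shows that conjugation $X \mapsto a_t X a_t^{-1}$ multiplies the upper-right block by $e^{2t}$, multiplies the lower-left block by $e^{-2t}$, and fixes the diagonal blocks. Thus $\mathrm{Ad}\lp a_t \rp$ is diagonalizable with eigenvalues among $\lbr 1, e^{2t}, e^{-2t} \rbr$; any nonzero matrix supported on the upper-right symmetric block is an eigenvector with eigenvalue $e^{2t}$, and any nonzero matrix supported on the lower-left symmetric block is an eigenvector with eigenvalue $e^{-2t}$ (a convenient concrete choice being the matrix whose only nonzero entry is a $1$ in a diagonal position of the relevant block).

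Finally, set $\lambda = \lambda\lp t, d \rp := e^{2|t|}$, which is $> 1$ because $t \neq 0$, and choose a unit eigenvector $X$ supported on the upper-right block when $t > 0$ and on the lower-left block when $t < 0$; in either case $\mathrm{Ad}\lp a_t \rp X = \lambda X$. For any positive integer $m$ this yields
 \begin{align*}
\lv a_t^m \rv_\mathrm{op} \geqslant \lv \mathrm{Ad}\lp a_t^m \rp X \rv = \lambda^m \lv X \rv = \lambda^m,
\end{align*}
as claimed. I do not expect a genuine obstacle here: the only points needing (routine) care are the standard identification of $\mathfrak{sp}\lp 2d, \R \rp$, the elementary block computation of the action of $\mathrm{Ad}\lp a_t \rp$, and the case split $t > 0$ versus $t < 0$ (handled by selecting the eigenvector from whichever of the two off-diagonal blocks has eigenvalue $e^{2|t|} > 1$), together with the observation that $\lv \cdot \rv_\mathrm{op}$ is literally the operator norm of the adjoint, which is what gives the bound $\lv g \rv_\mathrm{op} \geqslant \lv \mathrm{Ad}(g) X \rv$ for every unit $X$.
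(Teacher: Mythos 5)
Your proof is correct and follows essentially the same route as the paper: identify $\lv \cdot \rv_\mathrm{op}$ as the operator norm of $\mathrm{Ad}$, use the block description of $\mathfrak{sp}(2d,\R)$ to exhibit an eigenvector of $\mathrm{Ad}(a_t)$ in the upper-right (resp.\ lower-left) symmetric block with eigenvalue $e^{2t}$ (resp.\ $e^{-2t}$), and split into the cases $t>0$ and $t<0$. The only cosmetic difference is the concrete choice of eigenvector (a single diagonal entry versus the paper's corner-antidiagonal matrix $E$), which is immaterial.
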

\par There is an action of $\mathfrak{g}$ on the space $C_c^\infty \lp G \rp$ of smooth, compactly supported functions on $G$ by means of the exponential map $\exp : \mathfrak{g} \longrightarrow G$. Namely, for $Y \in \mathfrak{g}$ with $\Vert Y \Vert = 1$ and $\phi \in C_c^\infty \lp G \rp$,
 \begin{align*}
\lp Y.\phi \rp (g) := \lim_{t \rightarrow 0} \frac{\phi \lp \exp \lp t Y \rp g \rp - \phi \lp g \rp}{t}.
\end{align*} 
Thus the element $Y \in \mathfrak{g}$ acts on $C_c^\infty (G)$ as a first-order differential operator which we denote by $\mathcal{D}_Y$. This differential action extends to an action of the universal enveloping algebra $\mathcal{U}(\mathfrak{g})$ on $C_c^\infty (G)$ given by
 \begin{align*}
\lp Y_1^{e_1} \cdots Y_r^{e_r} \rp . \phi := \mathcal{D}_{Y_1}^{e_1} \cdots\mathcal{D}_{Y_r}^{e_r} \phi := \mathcal{D}_Y \phi
\end{align*} 
for a monomial $Y = Y_1^{e_1} \cdots Y_r^{e_r}$, and given for a general element of $\mathcal{U} ( \mathfrak{g} )$ by extending the above definition linearly. 
\subsection{Norms on $C_c^\infty (G)$}
A crucial ingredient in the proof of the CLT for the sequence (\ref{smoothfcts}) is the correlation estimate given in Theorem \ref{BEGmain}. Anticipating this theorem, we now introduce a family of Sobolev norms on the space $C_c^\infty \lp \Y \rp$.
\begin{definition}\label{sobnorm}
For any integer $q \geqslant 1$ and for $\phi \in C_c^\infty \lp \Y \rp$, we define the norms
 \begin{align}
\big\Vert \phi \big\Vert_{C^q} &:= \max \Bigg\{ \big\Vert \mathcal{D}_Z \phi \big\Vert_\infty : {\small \begin{array}{l}
Z \in \mathcal{U}(\mathfrak{g}) \text{ is a monomial}\\
\deg Z \leqslant q
\end{array}} \Bigg\}, \label{cq-norm}\\
S_q \lp \phi \rp &:= \shsp \max \Big\{ \big\Vert \phi \big\Vert_\infty, \, \big\Vert \phi \big\Vert_{C^q} \Big\} \label{sq-norm}.
\end{align} 
\end{definition}
We will need the following two standard properties of the $S_q$-norm. 
\begin{lemma}\label{propertiesofsq-norm}
The family $\lbr S_q \rbr_q$ has the following two properties.
\begin{itemize}
\item[i)] For any $\phi \in C_c^\infty \lp \Y \rp$ and $g \in G$, 
 \begin{align}\label{N3}
S_q \lp \phi \circ g  \rp \ll_q \Vert g \Vert_{\mathrm{op}}^q S_q \lp \phi \rp.
\end{align} 
\item[ii)] For any $\phi_1, \phi_2 \in C_c^\infty \lp \Y \rp$, 
 \begin{align}\label{N4}
S_q \lp \phi_1 \phi_2 \rp \ll_q S_{q} \lp \phi_1 \rp S_{q} \lp \phi_2 \rp.
\end{align} 
\end{itemize}
\end{lemma}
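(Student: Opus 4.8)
The plan is to reduce both claims to two elementary facts about the first-order operators $\mathcal{D}_Y$ — the intertwining identity $g \exp(tY) = \exp\lp t\shsp \mathrm{Ad}(g) Y\rp g$ for part (i), and the Leibniz rule for part (ii) — and then to carry out routine bookkeeping in a fixed basis of $\mathfrak{g}$.

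For (i), I would first fix $g \in G$ and $Y \in \mathfrak{g}$, view $\phi \in C_c^\infty(\Y)$ as a right-$\Gamma$-invariant function on $G$ (so that $(\phi \circ g)(h) = \phi(gh)$), and differentiate the identity $\phi \lp g \exp(tY) h \rp = \phi \lp \exp\lp t \shsp \mathrm{Ad}(g) Y \rp g h \rp$ at $t = 0$ to obtain
\begin{align*}
\mathcal{D}_Y \lp \phi \circ g \rp = \lp \mathcal{D}_{\mathrm{Ad}(g) Y} \phi \rp \circ g.
\end{align*}
Iterating this along a monomial $Z = Y_1 \cdots Y_r \in \mathcal{U}(\mathfrak{g})$ with each $\Vert Y_i \Vert = 1$ and $r = \deg Z \leqslant q$ gives $\mathcal{D}_Z(\phi \circ g) = \lp \mathcal{D}_{\mathrm{Ad}(g) Y_1} \cdots \mathcal{D}_{\mathrm{Ad}(g) Y_r} \phi \rp \circ g$. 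Next I would fix an orthonormal basis $X_1, \ldots, X_n$ of $\mathfrak{g}$ and expand $\mathrm{Ad}(g) Y_i = \sum_j c_{ij} X_j$; by Definition \ref{opnorm} one has $\Vert \mathrm{Ad}(g) Y_i \Vert \leqslant \Vert g \Vert_\mathrm{op}$, hence $|c_{ij}| \leqslant \Vert g \Vert_\mathrm{op}$. Using that $Z \mapsto \mathcal{D}_Z$ is $r$-linear on products, the composition $\mathcal{D}_{\mathrm{Ad}(g) Y_1} \cdots \mathcal{D}_{\mathrm{Ad}(g) Y_r}$ then expands into a sum of at most $n^q$ monomial operators $\mathcal{D}_{X_{j_1}} \cdots \mathcal{D}_{X_{j_r}}$ of degree $r \leqslant q$, each with a coefficient of modulus at most $\Vert g \Vert_\mathrm{op}^r \leqslant \Vert g \Vert_\mathrm{op}^q$, the last inequality using $\Vert g \Vert_\mathrm{op} \geqslant 1$ (which holds since $\det \mathrm{Ad}(g) = 1$, forcing the largest singular value of $\mathrm{Ad}(g)$ to be at least $1$). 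Since precomposition with $g$ is a bijection of $\Y$ and therefore preserves $\Vert \cdot \Vert_\infty$, passing to sup-norms and invoking (\ref{cq-norm}) would yield $\Vert \mathcal{D}_Z(\phi \circ g) \Vert_\infty \ll_q \Vert g \Vert_\mathrm{op}^q \Vert \phi \Vert_{C^q}$; together with $\Vert \phi \circ g \Vert_\infty = \Vert \phi \Vert_\infty \leqslant \Vert g \Vert_\mathrm{op}^q \Vert \phi \Vert_\infty$, this is exactly (\ref{N3}).

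For (ii), I would start from the derivation property $\mathcal{D}_Y(\phi_1 \phi_2) = (\mathcal{D}_Y \phi_1) \phi_2 + \phi_1 (\mathcal{D}_Y \phi_2)$ for $Y \in \mathfrak{g}$, which is immediate from the definition of $\mathcal{D}_Y$, and induct on $r = \deg Z$ to get, for a monomial $Z = Y_1 \cdots Y_r$,
\begin{align*}
\mathcal{D}_Z(\phi_1 \phi_2) = \sum \lp \mathcal{D}_{Z'} \phi_1 \rp \lp \mathcal{D}_{Z''} \phi_2 \rp,
\end{align*}
the sum running over the $2^r$ ways to split the ordered factors of $Z$ into two ordered subwords $Z', Z''$ with $\deg Z' + \deg Z'' = r$. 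For $r = \deg Z \leqslant q$, every summand is a product of a derivative of $\phi_1$ of order $\leqslant q$ with one of $\phi_2$ of order $\leqslant q$, whence $\Vert \mathcal{D}_Z(\phi_1 \phi_2) \Vert_\infty \leqslant 2^q \Vert \phi_1 \Vert_{C^q} \Vert \phi_2 \Vert_{C^q}$; combining this with $\Vert \phi_1 \phi_2 \Vert_\infty \leqslant \Vert \phi_1 \Vert_\infty \Vert \phi_2 \Vert_\infty$, the definition (\ref{sq-norm}), and majorizing each factor by the appropriate $S_q$ gives (\ref{N4}).

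I do not expect a genuine obstacle in either part; the one place requiring care is the bookkeeping in (i), namely checking that $r \leqslant q$ compositions of the operators $\mathcal{D}_{\mathrm{Ad}(g) Y_i}$ expand into a number of degree-$\leqslant q$ monomial operators bounded in terms of $q$ and $\dim \mathfrak{g}$ alone, with all coefficients of size $O_q(\Vert g \Vert_\mathrm{op}^q)$. The small but easily overlooked ingredient there is that $\Vert g \Vert_\mathrm{op} \geqslant 1$, which is what allows the single exponent $q$ to be used uniformly over all $r \leqslant q$.
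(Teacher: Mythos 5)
The paper leaves this lemma's proof ``to the reader,'' so there is no argument in the text to compare against; what you have written is precisely the standard route one would expect, and it is correct. Part (i) rests on the intertwining identity $\mathcal{D}_Y(\phi\circ g) = (\mathcal{D}_{\mathrm{Ad}(g)Y}\phi)\circ g$, which you derive correctly from $g\exp(tY) = \exp(t\,\mathrm{Ad}(g)Y)g$, and the iteration and basis expansion are carried out soundly; your observation that $\|g\|_{\mathrm{op}}\geqslant 1$ (via $\det\mathrm{Ad}(g)=1$, which holds since $G$ is connected and unimodular) is exactly the point that lets a single exponent $q$ work uniformly over $r\leqslant q$ and also controls $\|\phi\circ g\|_\infty$. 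Part (ii) is the Leibniz rule iterated to the ``ordered subword'' expansion, which is correct because $\mathcal{D}_{Z'}, \mathcal{D}_{Z''}$ appearing there are genuine monomial operators of degree $\leqslant q$.

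One small inaccuracy worth fixing: in part (ii) you momentarily bound $\|\mathcal{D}_Z(\phi_1\phi_2)\|_\infty$ by $2^q\|\phi_1\|_{C^q}\|\phi_2\|_{C^q}$, but terms where $Z'$ or $Z''$ is empty contribute $\|\phi_i\|_\infty$ rather than $\|\phi_i\|_{C^q}$, and by the paper's convention (\ref{sq-norm}) these are distinct quantities. The correct intermediate bound is $\|\mathcal{D}_Z(\phi_1\phi_2)\|_\infty\leqslant 2^q S_q(\phi_1)S_q(\phi_2)$, which you effectively recover in the last sentence by ``majorizing each factor by the appropriate $S_q$,'' so the conclusion stands. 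You could also avoid the basis expansion in part (i) entirely by normalizing: $\mathcal{D}_{\mathrm{Ad}(g)Y_i} = \|\mathrm{Ad}(g)Y_i\|\,\mathcal{D}_{W_i}$ with $W_i$ a unit vector, so the composition is $\bigl(\prod_i\|\mathrm{Ad}(g)Y_i\|\bigr)\mathcal{D}_{W_1}\cdots\mathcal{D}_{W_r}$ with the product of scalars at most $\|g\|_{\mathrm{op}}^q$ — this gives the bound with implied constant $1$ and no dependence on $\dim\mathfrak{g}$, though either way the result is the same.
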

\begin{proof}
The proof of this is long, but straightforward, and is therefore left to the reader.
\end{proof}
We will also need a family of Sobolev norms on Euclidean space. 
\begin{definition}
For any integer $q \geqslant 1$ and $f \in C_c^\infty \lp \R^{2d} \rp$, we let
 \begin{align*}
\lv f \rv_{C^q} := \max \lbr \lv f^{(\tau)} \rv_\infty : \deg \tau \leqslant q \rbr
\end{align*} 
where $\tau = (\tau_1, \ldots, \tau_{2d})$ denotes a multi-index, $\deg \tau = \tau_1 + \cdots + \tau_{2d}$, and 
 \begin{align*}
f^{(\tau)} =  \frac{\partial^{\tau_1}}{\partial x_{1}^{\tau_1}} \cdots \frac{\partial^{\tau_{2d}}}{\partial x_{2d}^{\tau_{2d}}} f.
\end{align*} 
\end{definition}
\section{$L^p$ Bounds on the Height Function $\alpha$}
In this second preliminaries section, we will investigate the height function $\alpha$ mentioned in the introduction that plays a key role in our construction of a smooth function that approximates the Siegel transform $\widehat{\chi_2}$. \par 
As we mentioned earlier, the Siegel transform of a bounded function $f$ on $\R^{2d}$ will generally be unbounded on $\Y$. Fortunately, however, if $f$ has compact support it is possible to remedy this situation due to an explicit connection between the Siegel transform $\hat{f}$ and the following function.
\begin{definition}\label{alphafunctiondefinition}
Let $\Lambda$ be a lattice (not necessarily unimodular) in any number of dimensions and denote by $d \lp \Lambda \rp < \infty$ the covolume of $\Lambda$. Then we define
 \begin{align*}
\alpha \lp \Lambda \rp := \sup_V \bigg\{ d \lp V \cap \Lambda \rp^{-1} : V \cap \Lambda \text{ } \mathrm{is} \text{ } \mathrm{a} \text{ } \mathrm{lattice} \text{ } \mathrm{in} \text{ } V \bigg\},
\end{align*} 
where $V$ runs over all non-zero subspaces of the ambient Euclidean space of $\Lambda$.
\end{definition} 
Note that one always has $1 / s \lp \Lambda \rp \leqslant \alpha \lp \Lambda \rp$ where $s \lp \Lambda \rp$ denotes the length of the shortest non-zero vector of $\Lambda$. Therefore, since the existence of very short lattice vectors will generally cause Siegel transforms to blow up, one could expect that $\alpha$ should more or less determine the growth of Siegel transforms on $X_n$. This result, due to Schmidt, is the source of our interest in $\alpha$.
\begin{prop}[{\cite[Lemma 2]{schmidt}}]\label{schmidt-alpha}
Suppose $f : \R^{n} \longrightarrow \R$ is a bounded function with compact support. Then for any unimodular lattice $\Lambda \in X_n$,
 \begin{align*}
\left| \hat{f} \lp \Lambda \rp \right| \ll_{\mathrm{supp} \, f} \Vert f \Vert_\infty \cdot \alpha \lp \Lambda \rp,
\end{align*} 
where $\widehat{f}$ denotes the Siegel transform of $f$.
\end{prop}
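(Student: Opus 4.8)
The plan is to reduce the estimate to a count of short lattice vectors and then to control that count via the successive minima of $\Lambda$, bounding it by the inverse covolume of a well-chosen rational subspace, which is exactly the quantity that $\alpha$ dominates. Concretely, I would first fix $R > 0$ with $\mathrm{supp}\, f$ contained in the closed Euclidean ball $B_R$ about the origin. Since $f$ vanishes outside $B_R$, only the terms with $\textbf{v} \in B_R$ contribute to $\widehat{f}(\Lambda) = \sum_{\textbf{v} \in \Lambda \setminus \{0\}} f(\textbf{v})$, so the triangle inequality gives
 \begin{align*}
\left| \widehat{f}(\Lambda) \right| \leqslant \Vert f \Vert_\infty \cdot \# \lp (\Lambda \setminus \{0\}) \cap B_R \rp,
\end{align*}
and it suffices to bound $\# \lp (\Lambda \setminus \{0\}) \cap B_R \rp$ by a constant depending only on $n$ and $R$ times $\alpha(\Lambda)$.

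For this I would bring in the successive minima $\lambda_1 \leqslant \cdots \leqslant \lambda_n$ of $\Lambda$ with respect to the unit ball, together with linearly independent $\textbf{v}_1, \ldots, \textbf{v}_n \in \Lambda$ with $\Vert \textbf{v}_i \Vert = \lambda_i$. If $\lambda_1 > R$ the count is zero and there is nothing to prove; otherwise set $j = \max \lbr i : \lambda_i \leqslant R \rbr$, let $V = \mathrm{span}_{\R} \lbr \textbf{v}_1, \ldots, \textbf{v}_j \rbr$, and put $\Lambda' = V \cap \Lambda$. The first key step is a \emph{localisation}: every non-zero lattice point of $B_R$ must lie in $V$, for otherwise such a point together with $\textbf{v}_1, \ldots, \textbf{v}_j$ would form $j+1$ linearly independent lattice vectors of length $\leqslant R$, forcing $\lambda_{j+1} \leqslant R$ and contradicting the maximality of $j$ (when $j = n$ there is nothing to check). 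Hence $(\Lambda \setminus \{0\}) \cap B_R = (\Lambda' \setminus \{0\}) \cap B_R$. The second step is a count in $\Lambda'$: it is a full-rank lattice in the $j$-dimensional space $V$ containing the independent vectors $\textbf{v}_1, \ldots, \textbf{v}_j$ of norm $\leqslant R$, so (passing to the sublattice $\mathrm{span}_{\Z} \{ \textbf{v}_1, \ldots, \textbf{v}_j \}$ and rounding coordinates) its covering radius in $V$ is at most $\tfrac12 (\lambda_1 + \cdots + \lambda_j) \leqslant \tfrac{j}{2} R$. Tiling $V$ by $\Lambda'$-translates of the Voronoi cell of $\Lambda'$ — a fundamental domain of volume $d(\Lambda')$ all of whose points have norm $\leqslant \tfrac{j}{2} R$ — and retaining the translates centred at points of $\Lambda' \cap B_R$ produces pairwise essentially disjoint sets inside the ball of radius $(1+j)R$ in $V$, whence
 \begin{align*}
\# \lp \Lambda' \cap B_R \rp \cdot d(\Lambda') \leqslant \vol \lp B_{(1+j)R} \cap V \rp \ll_n R^j, \qquad \text{that is,} \qquad \# \lp \Lambda' \cap B_R \rp \ll_n R^j \hsp d(\Lambda')^{-1}.
\end{align*}
Finally, since $V$ is spanned by vectors of $\Lambda$, the group $\Lambda' = V \cap \Lambda$ is a lattice in $V$, so $V$ is one of the subspaces admitted in the supremum defining $\alpha$; therefore $d(\Lambda')^{-1} \leqslant \alpha(\Lambda)$. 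Combining the displays yields $\# \lp (\Lambda \setminus \{0\}) \cap B_R \rp \ll_{n,R} \alpha(\Lambda)$, and hence $\left| \widehat{f}(\Lambda) \right| \ll_{n,\, \mathrm{supp}\, f} \Vert f \Vert_\infty \hsp \alpha(\Lambda)$, which is the assertion.

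The step I expect to require the most care — and the one that explains why $\alpha$ enters to the \emph{first} power rather than to a power of $n$ — is the localisation together with the ensuing count: it is essential to count inside the \emph{correct} sublattice $\Lambda'$, namely the one generated by the small minima, and to measure it by the covolume $d(\Lambda')$, since $d(\Lambda')^{-1}$ is exactly the quantity that $\alpha$ controls. A cruder input, such as the packing bound $\# (\Lambda \cap B_R) \ll_n (R/\lambda_1)^n$ involving only the shortest vector, would give a bound of the shape $\Vert f \Vert_\infty\, \alpha(\Lambda)^n$, which is useless for the later truncation of Siegel transforms. The covering-radius inequality used above is elementary (round coordinates in the sublattice $\mathrm{span}_{\Z} \{\textbf{v}_1, \ldots, \textbf{v}_j\}$); alternatively one may invoke reduction theory to obtain a basis of $\Lambda'$ with $\Vert \textbf{b}_i \Vert \ll_n \lambda_i$ and run the count with a reduced fundamental parallelepiped, cf. \cite{cassels}.
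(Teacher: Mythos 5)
Your argument is correct. Note that the paper does not prove this proposition at all — it is imported directly from Schmidt's work (Lemma 2 of \cite{schmidt}, see also Lemma 3.1 of \cite{BG2-ref4}) — so there is no in-paper proof to compare against; your write-up is a sound self-contained substitute and follows the standard route: bound $|\hat f(\Lambda)|$ by $\Vert f \Vert_\infty$ times the number of non-zero lattice points in a ball $B_R \supset \mathrm{supp}\, f$, localise those points to the subspace $V$ spanned by the minima-achieving vectors with $\lambda_i \leqslant R$, and count inside $\Lambda' = V \cap \Lambda$ so that the answer is measured by $d(\Lambda')^{-1} \leqslant \alpha(\Lambda)$ rather than by a power of $1/\lambda_1$. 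The two points that needed care are both handled correctly: the localisation step legitimately produces $j+1$ independent lattice vectors of norm $\leqslant R$ (contradicting maximality of $j$ when $j < n$), and the covering-radius bound $\mu(\Lambda') \leqslant \tfrac12(\lambda_1 + \cdots + \lambda_j)$ is justified by passing to the finite-index sublattice $\mathrm{span}_{\Z}\{\textbf{v}_1,\ldots,\textbf{v}_j\}$, after which the Voronoi-tiling volume comparison gives $\#(\Lambda' \cap B_R) \ll_{n,R} d(\Lambda')^{-1}$ with constants depending only on $n$ and $R$, exactly as required.
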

For the purposes of estimating norms of Siegel transforms on the space of unimodular lattices, Proposition \ref{schmidt-alpha} is particularly useful since $\alpha \in L^p (X_n)$ for $p = 1, \ldots, n-1$ {\cite[Lemma 3.10]{BG2-ref4}}. The goal of this section is to prove Theorem \ref{aaalphaint}, which extends this integrability result to the symplectic case. \par 
In order to prove Theorem \ref{aaalphaint}, we will need to describe a rather explicit Siegel set containing a fundamental domain of the coset space $G / \Gamma$. To this end, we initially describe three distinguished subgroups of $G$ and introduce some notation. \par Let ${\bf K} = \text{SO}(2d) \cap G$, and let
 \begin{align*}
{\bf A} = \left\lbrace \lp \begin{matrix}
D & 0 \\
0 & D^{-1}
\end{matrix} \rp : D > 0 \text{ diagonal} \right\rbrace.
\end{align*}  
Moreover, let $\textbf{N}(d) = \textbf{N}$ be the subgroup of $(2d)\times (2d)$ symplectic matrices given by
 \begin{align*}
\textbf{N}(d) := \lbr \lp \begin{matrix}
N & M \\
0 & N^{-\intercal} \end{matrix} \rp : N \text{ unipotent, upper-triangular, }\, NM^\intercal = MN^\intercal \rbr.
\end{align*} 
Finally, for real parameters $t, u > 0$, let ${\bf N}_{u} := \lbr A \in {\bf N}  : \Vert A \Vert_\infty \leqslant u \rbr$ and ${\bf N}_{\Z} := \mathrm{GL}(2d, \Z) \cap {\bf N}$, and define
\begin{align*}
{\bf A}_t := \Bigg\{ \diag \lp a_1, \ldots, a_d, a_1^{-1}, \ldots, a_d^{-1} \rp : 0 < a_i \leqslant t a_{i+1} \, \text{ for } i = 1, \ldots, d-1, \text{ and } 0 < a_d \leqslant t \Bigg\}.
\end{align*}
For future reference, we note that for $a \in \textbf{A}_t$, one has the bounds
 \begin{align}\label{boundsona}
a_i \leqslant t^{d+1-i}, \quad \quad i = 1, \ldots, d.
\end{align} 
\par We also note the following Iwasawa decomposition of the symplectic group.
\begin{theorem}[{\cite[Thm. 6.46]{knapp}}]\label{iwasawadec}
The Lie group $G$ is diffeomorphic to the product ${\bf K} \times {\bf A} \times {\bf N}$. 
\end{theorem}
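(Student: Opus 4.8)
The statement is the special case $G = \SP(2d,\R)$ of the Iwasawa decomposition for real semisimple Lie groups. Abstractly, with the Cartan involution $\theta(X) = -X^\intercal$ on $\mathfrak{g}$, the subspace $\mathfrak{p} \subset \mathfrak{g}$ of symmetric matrices, the maximal abelian subspace $\mathfrak{a} = \lbr \diag(h_1,\ldots,h_d,-h_1,\ldots,-h_d) : h_i \in \R \rbr$ of $\mathfrak{p}$, and $\mathfrak{n}$ the sum of the positive restricted root spaces (the root system being of type $C_d$), one has $\mathfrak{g} = \mathfrak{k} \oplus \mathfrak{a} \oplus \mathfrak{n}$, and the general theory yields that the product map $K \times A \times N \to G$ is a diffeomorphism; what remains is only to identify the abstract $K$, $A$, $N$ with ${\bf K}$, ${\bf A}$, ${\bf N}$. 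I would, however, prefer the following self-contained argument via a symplectic Gram--Schmidt procedure.

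First I would record, by a block computation, that
\begin{align*}
{\bf A}{\bf N} = \lbr \lp \begin{smallmatrix} B & C \\ 0 & B^{-\intercal} \end{smallmatrix} \rp \in G \hsp : \hsp B \text{ upper triangular with positive diagonal}, \hsp BC^\intercal = CB^\intercal \rbr,
\end{align*}
with ${\bf A}$ recovering the diagonal part and ${\bf N}$ the unipotent part. From this it is immediate that ${\bf A} \cap {\bf N} = \lbr I \rbr$ and ${\bf K} \cap {\bf A}{\bf N} = \lbr I \rbr$: an orthogonal matrix of that block shape has orthonormal first $d$ columns, so $B$ is orthogonal, upper triangular and positive on the diagonal, hence $B = I$, whereupon orthonormality of the last $d$ columns forces $C = 0$. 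I would also use that every element of $G$ has determinant $1$, so that ${\bf K} = G \cap \mathrm{SO}(2d)$.

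For existence, fix $g \in G$ with columns $c_1,\ldots,c_{2d}$. The relation $g^\intercal J g = J$ shows that $L := \mathrm{span}_\R\lbr c_1,\ldots,c_d\rbr$ is a Lagrangian subspace and that $JL = L^\perp$. Gram--Schmidt applied to $c_1,\ldots,c_d$ produces an orthonormal basis $u_1,\ldots,u_d$ of $L$ with $u_j \in \mathrm{span}_\R\lbr c_1,\ldots,c_j \rbr$ and $\langle c_j,u_j\rangle > 0$; one then checks that $\lbr u_1,\ldots,u_d,-Ju_1,\ldots,-Ju_d\rbr$ is an orthonormal symplectic basis, so the matrix $k$ with these columns lies in ${\bf K}$. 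Since $k^{-1}u_i = e_i$ for $i \leqslant d$ and $c_j \in \mathrm{span}_\R\lbr u_1,\ldots,u_j\rbr$, the matrix $k^{-1}g$ has vanishing lower-left block and an upper-triangular, positive-diagonal upper-left block; being symplectic, it therefore lies in ${\bf A}{\bf N}$ by the description above, and writing $k^{-1}g = an$ with $a \in {\bf A}$, $n \in {\bf N}$ gives $g = kan$.

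Uniqueness follows at once from ${\bf K} \cap {\bf A}{\bf N} = {\bf A} \cap {\bf N} = \lbr I \rbr$. To upgrade the resulting bijection $(k,a,n) \mapsto kan$ from ${\bf K} \times {\bf A} \times {\bf N}$ onto $G$ to a diffeomorphism, I would observe that it is smooth, that $\dim{\bf K} + \dim{\bf A} + \dim{\bf N} = d^2 + d + d^2 = d(2d+1) = \dim G$, and that its differential at $(I,I,I)$ is surjective since $\mathfrak{g} = \mathfrak{k} \oplus \mathfrak{a} \oplus \mathfrak{n}$ as vector spaces (the same block considerations); by equivariance the differential is then an isomorphism at every point, so the map is a local diffeomorphism, and being bijective it is a diffeomorphism. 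The bulk of the work here is bookkeeping --- confirming that the Gram--Schmidt output is genuinely a symplectic basis, and that the block subgroups ${\bf A}$, ${\bf N}$ coincide with $\exp\mathfrak{a}$, $\exp\mathfrak{n}$ --- rather than any single hard step; this is presumably why the result is quoted from \cite{knapp} rather than proved in detail.
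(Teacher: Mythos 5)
Your argument is correct, but note that the paper does not prove this statement at all: it is quoted directly from \cite[Thm.~6.46]{knapp}, where it is an instance of the general Iwasawa decomposition for semisimple Lie groups --- essentially the route you sketch in your first paragraph (Cartan involution, restricted root system of type $C_d$, $\mathfrak{g} = \mathfrak{k}\oplus\mathfrak{a}\oplus\mathfrak{n}$, identification of the abstract factors with ${\bf K}$, ${\bf A}$, ${\bf N}$). Your self-contained symplectic Gram--Schmidt proof is therefore a genuinely different, more elementary route, and it checks out: the block description of ${\bf A}{\bf N}$, the triviality of ${\bf K}\cap{\bf A}{\bf N}$ and ${\bf A}\cap{\bf N}$, the verification that $\lp u_1,\ldots,u_d,-Ju_1,\ldots,-Ju_d \rp$ is an orthonormal symplectic basis, and the conclusion that $k^{-1}g$ has the block-triangular shape forcing $k^{-1}g\in{\bf A}{\bf N}$ are all sound, as is the dimension count $d^2+d+d^2=\dim G$. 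What the citation buys is brevity and no need to check that ${\bf K},{\bf A},{\bf N}$ really are the abstract Iwasawa factors (e.g.\ that ${\bf N}=\exp\mathfrak{n}$); what your proof buys is a purely linear-algebraic argument with no structure theory. The one glossed step is the final upgrade to a diffeomorphism: ``by equivariance the differential is an isomorphism at every point'' needs a word more, since left translation by $k_0$ and right translation by $n_0$ only reduce you to points of the form $(1,a_0,1)$, where you still invoke $\mathrm{Ad}(a_0)\mathfrak{n}=\mathfrak{n}$ to see that the image of the differential is all of $\mathfrak{k}\oplus\mathfrak{a}\oplus\mathfrak{n}$. Alternatively you can bypass the differential computation entirely: your construction of $(k,a,n)$ from $g$ (Gram--Schmidt followed by extraction of the diagonal and unipotent parts) depends smoothly on $g$, so it is itself the smooth inverse of the multiplication map.
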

With these definitions in place, we can describe the Siegel set containing a fundamental domain for $G / \Gamma$.
\begin{prop}\label{siegeldroid}
There exists an explicit $u = u(d) > 0$ such that, with $t = 2 / \sqrt{3}$, one has $\SP(2d, \R) = {\bf K} {\bf A }_t {\bf N}_u \cdot \SP(2d, \Z)$.
\end{prop}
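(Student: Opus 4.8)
The plan is to deduce the claim from classical Siegel reduction theory on the Siegel upper half-space. Let
\[
\mathbb{H}_d = \lbr Z = X + iY : X, Y \in \mathrm{Sym}_d(\R),\ Y > 0 \rbr,
\]
on which $G$ acts by $\lp \begin{smallmatrix} A & B \\ C & D \end{smallmatrix} \rp\langle Z \rangle = (AZ + B)(CZ + D)^{-1}$, with ${\bf K} = \mathrm{SO}(2d) \cap G$ the stabiliser of $iI_d$. This identifies ${\bf K}\backslash G$ with $\mathbb{H}_d$ via ${\bf K}g \mapsto g^{-1}\langle iI_d\rangle$, carrying the right $\Gamma$-action on ${\bf K}\backslash G$ to the usual action of $\Gamma = \SP(2d,\Z)$ on $\mathbb{H}_d$ (using $\Gamma = \Gamma^{-1}$). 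Under the Iwasawa coordinates of Theorem \ref{iwasawadec}, ${\bf K}{\bf A}{\bf N}$ is carried onto all of $\mathbb{H}_d$: an element $a = \diag(D, D^{-1}) \in {\bf A}$ with $D = \diag(a_1, \ldots, a_d)$ and an element $\lp \begin{smallmatrix} N & M \\ 0 & N^{-\intercal} \end{smallmatrix} \rp$ of ${\bf N}$ produce a point of $\mathbb{H}_d$ with imaginary part $N^{-1} \diag(a_1^{-2}, \ldots, a_d^{-2}) N^{-\intercal}$ and real part determined by $M$ and $N$. In these coordinates the inequalities $0 < a_i \leqslant t\shsp a_{i+1}$ defining ${\bf A}_t$ express precisely that $\mathrm{Im}\,Z$ is Minkowski-reduced (after ordering the coordinates suitably): the constant $t = 2/\sqrt{3} = \gamma_2$ is Hermite's constant in dimension two, so that $t^2 = 4/3$ is the maximal ratio of consecutive Gram--Schmidt coefficients of a Minkowski-reduced positive definite form, while the bound $\lv\cdot\rv_\infty \leqslant u$ in the definition of ${\bf N}_u$, together with the normalisation $\lv\mathrm{Re}\,Z\rv_\infty \leqslant 1/2$, encodes the explicit entrywise bounds of Minkowski's reduction theory — this is how one extracts the explicit value $u = u(d)$. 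Hence ${\bf K}{\bf A}_t{\bf N}_u$ maps onto a Siegel domain $\mathcal{S} \subset \mathbb{H}_d$ containing Siegel's fundamental domain, and the proposition becomes equivalent to the covering statement $\Gamma \cdot \mathcal{S} = \mathbb{H}_d$.

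To prove the covering, fix $Z \in \mathbb{H}_d$ and run the standard three-step reduction. First, using $\det\mathrm{Im}\lp M\langle Z\rangle\rp = \det Y \cdot \lv\det(CZ+D)\rv^{-2}$ for $M = \lp\begin{smallmatrix} A & B \\ C & D \end{smallmatrix}\rp \in \Gamma$, replace $Z$ by a $\Gamma$-translate maximising $\det\mathrm{Im}$ over the orbit; then $\lv\det(CZ+D)\rv \geqslant 1$ for all $M \in \Gamma$. Second, act by $\lbr \diag(U^{-\intercal}, U) : U \in \mathrm{GL}_d(\Z)\rbr \subset \Gamma$, which sends $Y \mapsto U^{-\intercal}YU^{-1}$ and preserves $\det Y$, choosing $U$ so that $\mathrm{Im}\,Z$ is Minkowski-reduced; by the dictionary above this puts the ${\bf A}$-coordinate in ${\bf A}_t$ and the unipotent coordinate in the prescribed box. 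Third, act by $\lbr\lp\begin{smallmatrix} I & S \\ 0 & I\end{smallmatrix}\rp : S \in \mathrm{Sym}_d(\Z)\rbr \subset \Gamma$, translating $\mathrm{Re}\,Z$ by $S$ and fixing $\mathrm{Im}\,Z$, until $\lv\mathrm{Re}\,Z\rv_\infty \leqslant 1/2$. The last two operations do not decrease $\det\mathrm{Im}$, so the maximality from the first step persists and the resulting point lies in $\mathcal{S}$. Transporting this back along ${\bf K}\backslash G \cong \mathbb{H}_d$, and attending to the routine bookkeeping relating left cosets, right cosets and inverses, gives $G = {\bf K}{\bf A}_t{\bf N}_u \cdot \SP(2d,\Z)$.

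The one genuinely non-formal ingredient, which I expect to be the main obstacle, is the existence of the maximiser in the first step: that $\sup_{M \in \Gamma}\det\mathrm{Im}\lp M\langle Z\rangle\rp$ is finite and attained, equivalently that the values $\lv\det(CZ+D)\rv$, as $(C,D)$ runs over the bottom block-rows of elements of $\Gamma$, form a discrete subset of $(0,\infty)$ with a strictly positive minimum. This is classical: $\lv\det(CZ+D)\rv$ depends on $(C,D)$ only up to left multiplication by $\mathrm{GL}_d(\Z)$, and it tends to $\infty$ as $(C,D)$ leaves a fixed compact set of the corresponding quotient — by a direct estimate exploiting $Y > 0$ (the estimate underlying the convergence of Siegel's Eisenstein series), or via Mahler's compactness criterion applied to the relevant family of lattices and the trivial bound $1/s(\Lambda) \leqslant \alpha(\Lambda)$ — so only finitely many classes of $(C,D)$ realise $\lv\det(CZ+D)\rv$ below any given bound. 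The remaining inputs are standard: step two is Minkowski's reduction theory for $\mathrm{GL}_d(\Z)$ (the source of both $t = 2/\sqrt{3}$ and the explicit $u(d)$), and step three is elementary. An alternative, purely group-theoretic route — essentially the same argument repackaged — would run Borel's reduction theory directly on $G/\Gamma$: use that ${\bf K} \cong U(d)$ acts transitively on the Lagrangian Grassmannian $G/{\bf P}$, so that $G = {\bf K}{\bf P}$ for the Siegel parabolic ${\bf P}$; reduce its $\mathrm{GL}_d$-Levi factor modulo $\mathrm{GL}_d(\Z)$ and its abelian unipotent radical modulo $\mathrm{Sym}_d(\Z)$; and bound the remaining one-parameter split direction by a height maximisation over $\Gamma$ using the Weyl element $J \in \Gamma$.
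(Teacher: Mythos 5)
Your proposal is correct in outline but takes a genuinely different route from the paper. The paper proceeds by explicit induction on $d$: given $g$, it finds $\gamma \in \Gamma$ making $g\gamma \textbf{e}_1$ a shortest nonzero vector of $g\Z^{2d}$, extracts from the Iwasawa $AN$-part of $g\gamma$ a $(2d-2)\times(2d-2)$ symplectic block, applies the inductive hypothesis, lifts the resulting $\gamma'' \in \SP(2d-2,\Z)$ to $\SP(2d,\Z)$ via the embedding lemma, and closes the gap $d_1/a_1 \leqslant 2/\sqrt{3}$ using the shortest-vector property (Lemma \ref{diagonallemma}, which is a symplectic analogue of \cite[Lemma V.1.6]{bekkamayer}); the unipotent part is handled by the separate inductive Lemma \ref{tooktoolong}. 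You instead transport the problem to $\mathbb{H}_d$ and invoke the classical three-step Siegel reduction — maximise $\det\mathrm{Im}$, Minkowski-reduce $\mathrm{Im}\,Z$ via the Levi $\mathrm{GL}_d(\Z)$, translate $\mathrm{Re}\,Z$ by $\mathrm{Sym}_d(\Z)$ — then read off the Iwasawa bounds. Both are sound. What each buys: the paper's induction is self-contained within the group, never needs the symmetric-space model, and produces a fully explicit $u(d)$ through the recursion in Lemma \ref{tooktoolong}, while the shortest-vector argument yields a small-index discreteness rather than the full attainability of a height supremum over $\Gamma$; your route is shorter and more conceptual, but it delegates the key analytic input (attainability of $\sup_{M\in\Gamma}\det\mathrm{Im}(M\langle Z\rangle)$, i.e.\ the convergence underlying Siegel's Eisenstein series or an equivalent Mahler-type compactness argument) to a classical black box, and the passage from \emph{Minkowski-reduced} to the \emph{Iwasawa box} ${\bf A}_t{\bf N}_u$ — which you state as a dictionary — is itself a short but nontrivial computation (the $d=2$ Hermite bound gives $t^2=4/3$ on consecutive Gram--Schmidt coefficients, and the explicit $u(d)$ is assembled from both the Minkowski off-diagonal bounds and the $\lev\mathrm{Re}\,Z\rev_\infty\leqslant 1/2$ box). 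Your closing remark about running Borel reduction through $G={\bf K}{\bf P}$ with the Siegel parabolic ${\bf P}$ is essentially equivalent to the $\mathbb{H}_d$ version and is also a valid alternative to what the paper does.
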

We believe that such a result is known to experts of the field, but we have been unable to find a suitable reference. We therefore postpone the proof of Proposition \ref{siegeldroid} to Appendix \ref{siegelappendix}.
\subsection{Proof of Theorem \ref{aaalphaint}}
\noindent Assuming Proposition \ref{siegeldroid}, we will now prove Theorem \ref{aaalphaint}.
\par Because of the identification $\Y \simeq G / \Gamma$, we can view $\alpha$ as a right-$\Gamma$-invariant function on $G$. We will also denote by $\alpha$ the lift of this map to $G$, so that $\alpha \lp g \Gamma \rp = \alpha (g)$. Since the group of diagonal symplectic matrices can be parametrized in a very straightforward manner, making integration over (a subset of) this group relatively simple, we want to prove that up to some constant, for $g = k a n \in \textbf{K} \textbf{A}_t \textbf{N}_u$, $\alpha (g \Gamma )$ essentially only depends on $a$. To this end, we will need the following alternative characterization of $\alpha$.
\begin{lemma}\label{alphadifferentcharacterization}
Given a discrete subgroup $\Delta \leqslant \R^{2d}$, let $d ( \Delta ) \in \lp 0, \infty \rp$ denote the covolume of $\Delta$ in the subspace $V_\Delta \subset \R^{2d}$ spanned by $\Delta$. For $g \in G$, one has 
 \begin{align*}
\alpha (g) = \sup \lbr d \lp \Delta \rp^{-1} : \Delta \leqslant g \Z^{2d} \text{ is discrete} \rbr.
\end{align*} 
\end{lemma}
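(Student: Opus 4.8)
The plan is to show the two supremum expressions agree by proving each dominates the other. Write $\beta(g)$ for the right-hand side $\sup\{ d(\Delta)^{-1} : \Delta \leqslant g\Z^{2d} \text{ discrete}\}$ and recall that $\alpha(g)$ is the supremum of $d(V \cap g\Z^{2d})^{-1}$ over non-zero subspaces $V$ such that $V \cap g\Z^{2d}$ is a lattice in $V$. The inequality $\alpha(g) \leqslant \beta(g)$ is essentially immediate: for any such $V$, the intersection $\Delta := V \cap g\Z^{2d}$ is itself a discrete subgroup of $\R^{2d}$, and since it is a lattice in $V$ the subspace it spans is exactly $V$, so $d(\Delta) = d(V \cap g\Z^{2d})$ and $\Delta$ is an admissible competitor in the definition of $\beta(g)$. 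Taking the supremum over all admissible $V$ gives $\alpha(g) \leqslant \beta(g)$.

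For the reverse inequality, I would start with an arbitrary discrete subgroup $\Delta \leqslant g\Z^{2d}$ and set $V = V_\Delta = \mathrm{span}_\R(\Delta)$. The key point is the standard fact that $V \cap g\Z^{2d}$ is \emph{also} a lattice in $V$ (a discrete subgroup of a rational — relative to the lattice $g\Z^{2d}$ — subspace extends to a full lattice in that subspace), and moreover $V \cap g\Z^{2d} \supseteq \Delta$ is a finite-index superlattice of $\Delta$ inside $V$. Hence $d(V \cap g\Z^{2d}) \leqslant d(\Delta)$, so $d(V \cap g\Z^{2d})^{-1} \geqslant d(\Delta)^{-1}$, and since $V$ is now an admissible subspace in the definition of $\alpha(g)$ we get $\alpha(g) \geqslant d(\Delta)^{-1}$. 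Taking the supremum over all discrete $\Delta$ yields $\alpha(g) \geqslant \beta(g)$, and combining the two inequalities completes the proof.

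The only genuinely non-trivial ingredient is the claim that for a discrete subgroup $\Delta$ of the lattice $g\Z^{2d}$, the intersection $V_\Delta \cap g\Z^{2d}$ is a lattice in $V_\Delta$ and contains $\Delta$ with finite index; this is the standard saturation/primitivization statement for sublattices of $\Z^{2d}$ (pass to $g\Z^{2d}$ coordinates and use that the $\Q$-span of $\Delta$ meets $g\Z^{2d}$ in a finitely generated group of the correct rank). I expect this to be the main — and essentially only — obstacle, and it is routine. Everything else is a bookkeeping matter of matching the two families of competitors, noting in particular that replacing $\Delta$ by the potentially larger lattice $V_\Delta \cap g\Z^{2d}$ can only \emph{decrease} the covolume and therefore only \emph{increase} the quantity being maximized, so no competitor is lost in passing from $\beta$ to $\alpha$.
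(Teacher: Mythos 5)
Your proof is correct and follows essentially the same route as the paper's: both reduce the matter to observing that saturating a discrete subgroup $\Delta$ to $V_\Delta \cap g\Z^{2d}$ can only decrease the covolume, hence only increase the quantity being maximized, so the two suprema coincide. The paper states this a bit more tersely (a single displayed inequality rather than two explicit directions), but the content and the key observation — that the only nontrivial point is that $V_\Delta \cap g\Z^{2d}$ is again a lattice in $V_\Delta$ containing $\Delta$ — are identical.
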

\begin{proof}
Let $g \in G$. Since $\alpha (g)$ is already defined as a supremum over all discrete subgroups $\Delta$ of a particular form, namely those satisfying $\Delta = V_\Delta \cap g \Z^{2d}$, we simply have to ensure that the remaining discrete subgroups of $g \Z^{2d}$ have larger covolumes than those considered in the definition of $\alpha$. However, this is clear: If $\Delta \leqslant g \Z^{2d}$ is a discrete subgroup with $\Delta$ \textit{properly contained} in $V_\Delta \cap g \Z^{2d}$, then 
 \begin{align*}
d (\Delta) = \left| V_\Delta / \Delta \right| > \left| V_\Delta / \lp V_\Delta \cap g \Z^{2d} \rp \right| = d \lp V_\Delta \cap g \Z^{2d} \rp,
\end{align*}  
so that $d \lp \Delta \rp^{-1} < d \lp V_\Delta \cap g \Z^{2d} \rp^{-1}$. 
\end{proof}
\begin{lemma}\label{alphabound}
There exists $C = C(t,u) > 0$ such that for any $g = kan \in {\bf K } {\bf A }_t {\bf N}_u$, one has $\alpha (g) \leqslant C \alpha (a)$.
\end{lemma}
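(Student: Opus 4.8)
The plan is to strip the compact factors $k\in\mathbf{K}$ and $n\in\mathbf{N}_u$ from $g=kan$ one at a time, working throughout with the description of $\alpha$ from Lemma~\ref{alphadifferentcharacterization} as a supremum of $d(\Delta)^{-1}$ over \emph{all} discrete subgroups $\Delta$ of the lattice in question. Removing $k$ is free: since $\mathbf{K}\subseteq\mathrm{SO}(2d)$, left multiplication by $k$ is a Euclidean isometry, so $\Delta\mapsto k\Delta$ is a covolume-preserving bijection from the discrete subgroups of $an\Z^{2d}$ onto those of $kan\Z^{2d}$; hence $\alpha(kan)=\alpha(an)$ exactly. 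It remains to produce $C=C(t,u)$ with $\alpha(an)\leqslant C\,\alpha(a)$ for all $a\in\mathbf{A}_t$, $n\in\mathbf{N}_u$.

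Given a discrete $\Delta\leqslant an\Z^{2d}$, put $\Lambda_0:=(an)^{-1}\Delta\leqslant\Z^{2d}$, which is discrete of some rank $k\leqslant 2d$, and set $\Theta:=a\Lambda_0\leqslant a\Z^{2d}$, a legitimate competitor in the supremum defining $\alpha(a)$. Writing $\phi:=ana^{-1}\in G$, so that $\phi^{-1}=an^{-1}a^{-1}$, one checks $\Theta=\phi^{-1}\Delta$. Applying $\phi^{-1}$ to a lattice basis of $\Delta$ and comparing $k$-dimensional volumes through the $k$-th exterior power of $\phi^{-1}$ gives the covolume distortion estimate $d(\Theta)\leqslant\|\phi^{-1}\|^{k}\,d(\Delta)\leqslant\max\{1,\|\phi^{-1}\|\}^{2d}\,d(\Delta)$, where $\|\cdot\|$ is the ordinary operator norm on $\R^{2d}$. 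Consequently $d(\Delta)^{-1}\leqslant\max\{1,\|\phi^{-1}\|\}^{2d}\,d(\Theta)^{-1}\leqslant\max\{1,\|\phi^{-1}\|\}^{2d}\,\alpha(a)$, and the lemma will follow once $\|an^{-1}a^{-1}\|$ is bounded by a constant depending only on $t,u,d$.

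For that uniform bound, note first that $n\in\mathbf{N}_u$ forces $n^{-1}\in\mathbf{N}_{u'}$ for some $u'=u'(u,d)$, since $I-n$ is nilpotent of index at most $2d$ and $n^{-1}=\sum_{j=0}^{2d-1}(I-n)^{j}$ has entries bounded polynomially in $u$. Writing $n^{-1}=\left(\begin{smallmatrix}N_0 & M_0\\ 0 & N_0^{-\intercal}\end{smallmatrix}\right)$ with $N_0$ upper-triangular unipotent, we then have $\|N_0\|_\infty,\|M_0\|_\infty,\|N_0^{-\intercal}\|_\infty\ll_{u,d}1$. With $a=\diag(D,D^{-1})$ and $D=\diag(a_1,\dots,a_d)$,
\begin{align*}
an^{-1}a^{-1}=\begin{pmatrix} DN_0D^{-1} & DM_0D\\ 0 & D^{-1}N_0^{-\intercal}D\end{pmatrix}.
\end{align*}
The defining inequalities $a_i\leqslant t\,a_{i+1}$ of $\mathbf{A}_t$ give $a_p/a_q\leqslant t^{q-p}\leqslant t^{d-1}$ whenever $p\leqslant q$, which bounds every entry of the triangular blocks $DN_0D^{-1}$ and $D^{-1}N_0^{-\intercal}D$ by $\ll_{u,d}t^{d}$; and (\ref{boundsona}) gives $a_i\leqslant t^{d}$ for every $i$, so the entries $a_ia_j(M_0)_{ij}$ of $DM_0D$ are $\ll_{u,d}t^{2d}$. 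Passing from this entrywise bound to the operator norm costs only a factor $2d$, so $\|an^{-1}a^{-1}\|\ll_{u,d}t^{2d}$, which finishes the argument.

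I expect the only non-routine step to be this last conjugation bound, and its essential feature is that it succeeds at all: it relies on the particular shape of the Siegel set $\mathbf{A}_t$ — namely that every $a_i$ is bounded \emph{above} (which is possible precisely because $a$ is volume-preserving) — so that the entries of the unipotent part that get multiplied by products $a_ia_j$ rather than by ratios $a_i/a_j$, in particular those of the symplectic block $M_0$, remain controlled. Everything else is elementary linear algebra and bookkeeping with the explicit parametrizations of $\mathbf{K}$, $\mathbf{A}_t$, and $\mathbf{N}_u$.
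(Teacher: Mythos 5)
Your proof is correct and follows essentially the same route as the paper: strip the orthogonal factor $k$ for free (it preserves covolumes), conjugate the unipotent factor past $a$ so that the relevant map is $\phi^{-1}=an^{-1}a^{-1}=(n')^{-1}$ whose entries are bounded in terms of $t$, $u$, $d$ thanks to the shape of $\mathbf{A}_t$, and then bound the resulting covolume distortion on discrete subgroups uniformly. The only point of departure is the last step: the paper invokes the auxiliary function $\beta$ from \cite[Lemma V.5.6]{bekkamayer} (with $\beta(x)^2$ a polynomial in the entries of $x$) to control $d((n')^{-1}\Delta)$ in terms of $d(\Delta)$, whereas you prove the needed distortion bound $d(\phi^{-1}\Delta)\leqslant\lVert\phi^{-1}\rVert^{k}\,d(\Delta)\leqslant\max\{1,\lVert\phi^{-1}\rVert\}^{2d}\,d(\Delta)$ by hand via the $k$-th exterior power, which makes the argument self-contained at the cost of a marginally cruder constant.
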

\begin{proof}
From \cite[Lemma V.5.6]{bekkamayer} we see that there exists a function $\beta : G \longrightarrow \R_+$ with the property that if $\Delta \leqslant \R^{2d}$ is discrete, then for any $g \in G$,
 \begin{align*}
d \lp g \Delta \rp \leqslant \beta (g) d (\Delta).
\end{align*} 
Letting $\Delta ' = g^{-1} \Delta$, we obtain
 \begin{align*}
d ( \Delta )  = d \lp g \Delta' \rp \leqslant \beta (g)  \, d \lp \Delta ' \rp = \beta (g) \, d \lp g^{-1} \Delta \rp,
\end{align*} 
and hence, by exchanging $g$ with $g^{-1}$,
 \begin{align}\label{lowerboundforg}
d \lp g \Delta \rp \geqslant \beta \lp g^{-1} \rp^{-1} d (\Delta)
\end{align} 
for all $g \in G$.\par 
Note that for $a = \text{diag} \lp a_1, \ldots, a_d, a_1^{-1}, \ldots, a_d^{-1} \rp \in \textbf{A}$ and
 \begin{align*}
n = \lp \begin{matrix}
N & M \\
0 & N^{-\intercal}
\end{matrix} \rp \in \textbf{N}, \quad \quad N = (n_{ij}), \quad \quad M = (m_{ij}),
\end{align*} 
one has $an = n' a$ with
 \begin{align*}
n' = \lp \begin{matrix}
N' & M' \\
0 & (N')^{-\intercal}
\end{matrix} \rp, \quad \quad M' = (a_i a_j m_{ij}),
\end{align*} 
and with $N' = (n_{ij}')$ unipotent and upper-triangular with entries $n_{ij}' = a_i a_j^{-1} n_{ij}$ for $i < j$. In particular, for $a \in \textbf{A}_t$ and $n \in \textbf{N}_u$, the compactness of ${\bf N}_u$ and ${\bf N}_u^{-1}$ and (\ref{boundsona}) show that the entries of $n'$ are bounded. \par 
Since any discrete subgroup $\Delta \leqslant n' a \Z^{2d}$ has the form $\Delta = n' \Delta'$ for some discrete subgroup $\Delta ' \leqslant a \Z^{2d}$, it follows from (\ref{lowerboundforg}) and Lemma \ref{alphadifferentcharacterization} that for $g = kan \in {\bf K } {\bf A }_t {\bf N}_u$,
 \begin{align*}
\alpha (g) &= \sup \lbr d(\Delta)^{-1} : \Delta \leqslant n' a \Z^{2d} \text{ discrete} \rbr \\
&= \sup \lbr d (n' \Delta')^{-1} : \Delta' \leqslant a \Z^{2d} \text{ discrete} \rbr  \\
&\leqslant \beta \lp (n')^{-1} \rp \sup \lbr d (\Delta)^{-1} : \Delta \leqslant a \Z^{2d} \text{ discrete} \rbr \\
&= \beta \lp (n')^{-1} \rp \alpha (a).
\end{align*} 
From the proof of \cite[Lemma V.5.6]{bekkamayer} we see that $\lp \beta (x) \rp^2$ is a polynomial in the entries of $x \in G$.  Therefore we find that 
 \begin{align*}
\beta \lp (n')^{-1} \rp \leqslant \sup \lbr \beta  \lp n^{-1} \rp : n \in \textbf{N}_u \rbr = C < \infty,
\end{align*} 
which proves the lemma.
\end{proof}
\begin{lemma}\label{shapeoflattice}
Let $a \in {\bf A}_t$ and $r \in \lbr 1, \ldots, 2d \rbr$. Among all the rank $r$ subgroups of $a \Z^{2d}$, the group of the smallest covolume is the integer span of $r$ distinct columns in $a$.
\end{lemma}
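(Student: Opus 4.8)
The plan is to reduce the problem to a statement about the standard lattice $\Z^{2d}$ and then exploit the fact that $a$ is diagonal with positive entries. Concretely, since $a$ is invertible, every rank-$r$ subgroup of $a\Z^{2d}$ has the form $a\Delta$ for a unique rank-$r$ subgroup $\Delta\leqslant\Z^{2d}$, and the covolume scales by a factor depending on $a$ and on the subspace $V_\Delta$. So I would first fix $\Delta\leqslant\Z^{2d}$ of rank $r$ and compute $d(a\Delta)$ explicitly. If $v_1,\ldots,v_r$ is a $\Z$-basis of $\Delta$ and $B=[v_1\ \cdots\ v_r]$ is the corresponding $2d\times r$ integer matrix, then $d(\Delta)=\sqrt{\det(B^\intercal B)}$ and $d(a\Delta)=\sqrt{\det(B^\intercal a^\intercal a\, B)}=\sqrt{\det(B^\intercal DB)}$, where $D=a^\intercal a=\diag(a_1^2,\ldots,a_d^2,a_1^{-2},\ldots,a_d^{-2})$ is diagonal with positive entries. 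Writing $w_i=a_i^2$ for $i\leqslant d$ and $w_{d+i}=a_i^{-2}$, I would apply the Cauchy–Binet formula to $\det(B^\intercal D B)$, obtaining
\begin{align*}
\det(B^\intercal DB)=\sum_{S} \lp\prod_{i\in S} w_i\rp \det(B_S)^2,
\end{align*}
where $S$ ranges over all $r$-element subsets of $\lbr 1,\ldots,2d\rbr$ and $B_S$ is the corresponding $r\times r$ minor of $B$.

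The key observation is then the following. Since $\Delta\subseteq\Z^{2d}$ is a \emph{primitive} sublattice (we may assume $\Delta=V_\Delta\cap\Z^{2d}$, as this only decreases the covolume and hence is the relevant case for the extremal problem), the vector of maximal minors $(\det B_S)_S$ is, up to sign, a vector of coprime integers, so at least one $\det B_S$ is non-zero; and every non-zero $\det B_S^2$ is $\geqslant 1$. Therefore
\begin{align*}
d(a\Delta)^2=\det(B^\intercal DB)\geqslant \min_{S:\,\det B_S\neq 0}\ \prod_{i\in S} w_i \geqslant \min_{\# S=r}\ \prod_{i\in S} w_i.
\end{align*}
On the other hand, if $S_0$ is an $r$-subset achieving $\min_{\# S=r}\prod_{i\in S}w_i$, then taking $\Delta_0=\mathrm{span}_\Z\lbr e_i:i\in S_0\rbr$ (so $a\Delta_0$ is the integer span of the columns of $a$ indexed by $S_0$) gives $d(a\Delta_0)^2=\prod_{i\in S_0}w_i$ exactly, since the Gram matrix is already diagonal. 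Hence the minimum covolume among rank-$r$ subgroups of $a\Z^{2d}$ equals $\sqrt{\min_{\# S=r}\prod_{i\in S}w_i}$ and is attained by an integer span of $r$ distinct columns of $a$, which is precisely the assertion.

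I would then finish by remarking that one can be explicit about \emph{which} columns: since $a\in{\bf A}_t$ with $t=2/\sqrt 3>1$ and $0<a_1\leqslant\cdots$ arranged so that the $a_i$ are comparable in a controlled way (cf. (\ref{boundsona})), the diagonal entries $w_i$ of $D$ are ordered, and the minimizing set $S_0$ consists of the indices of the $r$ smallest among $a_1^2,\ldots,a_d^2,a_1^{-2},\ldots,a_d^{-2}$; but the statement as phrased only requires that \emph{some} set of $r$ columns is optimal, so this refinement is not strictly needed. The one point requiring a little care — and the only place where a genuine argument (rather than bookkeeping) is involved — is the reduction to primitive $\Delta$ together with the coprimality of the maximal minors of a primitive integer matrix; this is a standard fact (the gcd of the $r\times r$ minors of $B$ equals the index $[\,V_\Delta\cap\Z^{2d}:\Delta\,]$, hence is $1$ exactly when $\Delta$ is primitive), so I expect no real obstacle, but it is the hinge of the proof and should be stated cleanly.
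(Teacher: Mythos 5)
Your proposal is correct and follows essentially the same route as the paper: expand the covolume squared via Cauchy--Binet (the paper writes this as $\lev \textbf{x}_1 \wedge \cdots \wedge \textbf{x}_r \rev^2 = \sum_{S}\bigl(\prod_{j\in S}a_j^2\bigr)(\det \textbf{X}_S)^2$, which is the same identity) and bound below by the minimal weight product, using integrality of the minors to guarantee the non-zero term contributes at least $1$. The only remark worth making is that your detour through primitivity and coprimality of the Pl\"ucker coordinates is unnecessary and is not really ``the hinge'': linear independence of a $\Z$-basis of a rank-$r$ subgroup already forces at least one $r\times r$ minor to be non-zero, and the paper's proof invokes exactly this lighter fact (via $\textbf{x}_1\wedge\cdots\wedge\textbf{x}_r\neq 0$) rather than anything about the gcd of the minors.
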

\begin{proof}
Let $\Delta = \text{span}_{\Z} \lbr \textbf{x}_1, \ldots, \textbf{x}_r \rbr$ for $\textbf{x}_1, \ldots, \textbf{x}_r \in a \Z^{2d}$. We will write $a_{i} = a_{i-d}^{-1}$ for $i \geqslant d+1$, so that $a = \text{diag}\lp a_1, \ldots, a_d, a_{d+1}, \ldots, a_{2d} \rp$. For $i = 1, \ldots, r$, we can write
 \begin{align*}
\textbf{x}_i = x_{1,i} a_1 \textbf{e}_1 + x_{2,i} a_2 \textbf{e}_2 + \cdots + x_{2d,i} a_{2d} \textbf{e}_{2d}
\end{align*} 
where all $x_{i,j}$ are integers and $\textbf{e}_1, \textbf{e}_2, \ldots$ denote the standard basis vectors of $\R^{2d}$. Let us denote by $\textbf{X}_{i_1, \ldots, i_r}$ the $r \times r$ matrix obtained from the $2d \times r$ matrix
 \begin{align*}
\lp \begin{matrix}
x_{1,1} &x_{1,2} &\cdots &x_{1,r} \\
x_{2,1} &x_{2,2} &\cdots &x_{2,r} \\
\vdots &\vdots &\ddots &\vdots \\
x_{2d, 1} &x_{2d,2} &\cdots &x_{2d,r} 
\end{matrix} \rp
\end{align*} 
by removing all rows except those numbered $i_1, i_2, \ldots$ or $i_r$. Then we find that
 \begin{align*}
\text{covol} (\Delta)^2 = \Vert \textbf{x}_1 \wedge \cdots \wedge \textbf{x}_r \Vert^2 = \sum_{1 \leqslant i_1 < \cdots < i_r \leqslant 2d } \lp \prod_{j = 1}^r a_{i_j}^2 \rp \lp\det \textbf{X}_{i_1, \ldots, i_r} \rp^2,
\end{align*} 
and since $\textbf{x}_1 \wedge \cdots \wedge \textbf{x}_r \neq 0$, this number is at least equal to the product of the squares of the $r$ smallest numbers in $\lbr a_1^{\pm 1}, \ldots, a_d^{\pm 1} \rbr$. This proves the lemma.
\end{proof}
\begin{lemma}\label{haarmeasure}
Let $\dd n$ be a left (and right) Haar measure on ${\bf N}$. Then 
 \begin{align*}
\dd(an) = \lp \prod_{i = 1}^{d} a_i^{2(d-i)+1} \rp \, \dd n \, \dd a_1 \, \cdots \, \dd a_d
\end{align*} 
is a right Haar measure on ${\bf AN}$.
\end{lemma}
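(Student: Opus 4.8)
The plan is to use the fact that $\mathbf{AN}$ is an internal semidirect product $\mathbf{A} \ltimes \mathbf{N}$: the group $\mathbf{N}$ is the unipotent radical of a Borel subgroup of $G$, hence normal in $\mathbf{AN}$, and $\mathbf{A} \cap \mathbf{N} = \lbr I_{2d} \rbr$, so every element of $\mathbf{AN}$ is uniquely of the form $an$ with $a \in \mathbf{A}$ and $n \in \mathbf{N}$. I would then apply the standard description of the right Haar measure on a semidirect product and simply read off the density in the coordinates $\lp a_1, \ldots, a_d, n \rp$.

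In more detail, for $a \in \mathbf{A}$ let $\delta (a) > 0$ be the modulus of the conjugation automorphism $n \longmapsto a n a^{-1}$ of $\mathbf{N}$, so that $\dd \lp a n a^{-1} \rp = \delta (a) \dd n$. The first step is to recall — or re-derive in a line — that, in the coordinates $h = an$, the measure $\delta (a) \, \dd_{\mathbf{A}} a \, \dd n$, with $\dd_{\mathbf{A}} a$ a Haar measure on the abelian (hence unimodular) group $\mathbf{A}$, is a right Haar measure on $\mathbf{AN}$. The cleanest way to see this is to pass to the coordinates $h = na$: from $\lp na \rp \lp n_0 a_0 \rp = \lp n \cdot a n_0 a^{-1} \rp \lp a a_0 \rp$ one sees that $\dd n \, \dd_{\mathbf{A}} a$ is right-invariant, using right-invariance of $\dd n$ on $\mathbf{N}$ and of $\dd_{\mathbf{A}} a$ on $\mathbf{A}$; rewriting $na = a \lp a^{-1} n a \rp$ to return to the coordinates $h = an$ then introduces exactly the Jacobian factor $\delta (a)$.

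The only computation with any content is the evaluation of $\delta (a)$. Since conjugation by $a$ is a Lie group automorphism of the unipotent group $\mathbf{N}$, its effect on Haar measure is multiplication by the determinant of its differential, i.e. $\delta (a) = \det \lp \mathrm{Ad}(a) \vert_{\mathfrak{n}} \rp$ (all the factors below being positive since the $a_i$ are). The Lie algebra $\mathfrak{n}$ consists of the matrices $\lp \begin{smallmatrix} X & Y \\ 0 & -X^\intercal \end{smallmatrix} \rp$ with $X$ strictly upper triangular and $Y$ symmetric, and for $a = \diag \lp a_1, \ldots, a_d, a_1^{-1}, \ldots, a_d^{-1} \rp$ with $D = \diag \lp a_1, \ldots, a_d \rp$ the adjoint action sends $\lp \begin{smallmatrix} X & Y \\ 0 & -X^\intercal \end{smallmatrix} \rp$ to $\lp \begin{smallmatrix} DXD^{-1} & DYD \\ 0 & -D^{-1}X^\intercal D \end{smallmatrix} \rp$, hence multiplies the coordinate $X_{ij}$ ($i < j$) by $a_i / a_j$, the coordinate $Y_{ij}$ ($i < j$) by $a_i a_j$, and the coordinate $Y_{ii}$ by $a_i^2$. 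Multiplying these eigenvalues gives
\[
\delta (a) = \prod_{1 \leqslant i < j \leqslant d} \frac{a_i}{a_j} \cdot \prod_{1 \leqslant i < j \leqslant d} a_i a_j \cdot \prod_{i = 1}^{d} a_i^2 = \prod_{i = 1}^{d} a_i^{2(d - i + 1)},
\]
where the last equality is obtained by collecting the exponent of each $a_i$. Finally, identifying $\mathbf{A}$ with $\lp \R_{> 0} \rp^d$ via $\lp a_1, \ldots, a_d \rp$, its Haar measure is $\dd_{\mathbf{A}} a = \prod_{i} \dd a_i / a_i$; substituting this into $\delta (a) \, \dd_{\mathbf{A}} a \, \dd n$ lowers each exponent by one and produces the stated density $\prod_{i = 1}^{d} a_i^{2(d - i) + 1}$.

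I expect the only real obstacle to be bookkeeping: computing the exponent of $a_i$ in $\delta (a)$ correctly — the contributions are $d - i$ from the numerators and $-(i - 1)$ from the denominators of $\prod_{i < j} a_i / a_j$, then $d - 1$ from $\prod_{i < j} a_i a_j$, and $2$ from $\prod_{i} a_i^2$, summing to $2(d - i + 1)$ — and taking care not to confuse $\delta (a)$ with $\delta \lp a^{-1} \rp$, or left- with right-invariance, when passing between the two coordinate systems. Beyond that, nothing deep is involved once the semidirect-product structure is in hand.
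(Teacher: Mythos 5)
Your proposal is correct and takes essentially the same route as the paper: compute $\delta(a)=\det\lp\mathrm{Ad}(a)|_{\mathfrak{n}}\rp=\prod_i a_i^{2(d-i+1)}$ and pair it with the Haar measure $\prod_i \dd a_i/a_i$ on $\mathbf{A}$. The paper carries out the determinant computation in the matrix-entry coordinates of $\mathbf{N}$ rather than on the Lie algebra $\mathfrak{n}$, and checks right-invariance by an explicit change-of-variables computation rather than by passing through the $h=na$ coordinates, but these are the same argument in slightly different clothing.
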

\begin{proof}
Since 
 \begin{align*}
\lp \begin{matrix}
D & 0 \\
0 & D^{-1}
\end{matrix} \rp 
\lp \begin{matrix}
N & M \\
0 & N^{-\intercal}
\end{matrix} \rp 
\lp \begin{matrix}
D^{-1} & 0 \\
0 & D
\end{matrix} \rp = 
\lp \begin{matrix}
D N D^{-1} & DMD \\
0 & \lp D N D^{-1} \rp^{- \intercal}
\end{matrix} \rp,
\end{align*} 
we see that $\textbf{A}$ normalizes $\textbf{N}$. When viewing ${\bf N}$ as a subset of Euclidean space, we therefore find that $\mathrm{Ad}(a)$ is a linear map on ${\bf N}$ with determinant 
 \begin{align*}
\det \mathrm{Ad} (a) = \rho \lp a \rp = \lp \prod_{1 \leqslant i < j \leqslant d} a_i a_j^{-1} \rp \lp \prod_{1 \leqslant j \leqslant i \leqslant d} a_i a_j \rp = \prod_{i = 1}^d a_i^{2(d-i)+2},
\end{align*} 
where the appearance of the second product is due to the fact that the entries $m_{ij}$ of $M$ with $i < j$ are dependent on the entries with $i \geqslant j$. Therefore, if $f \in C_c ({\bf AN})$ and $a_0 n_0 \in {\bf AN}$, the fact that ${\bf A}$ is abelian implies that  
 \begin{align*}
\int_{\bf A} \int_{\bf N} f(an a_0 n_0 ) \rho (a) \, \dd n \, \dd a 
&= \int_{\bf A} \int_{\bf N} f \lp a a_0 \lb a_0^{-1} n a_0 \rb  n_0 \rp \rho (a) \, \dd n \, \dd a \\
&= \int_{\bf A} \int_{\bf N} f\lp a \lb a_0^{-1} n a_0 \rb  n_0 \rp \rho \lp a_0^{-1} a \rp \, \dd n \, \dd a \\
&= \int_{\bf A} \int_{\bf N} f\lp a n n_0 \rp \rho (a_0) \rho \lp a_0^{-1} a \rp \, \dd n \, \dd a \\
&= \int_{\bf A} \int_{\bf N} f(an ) \rho (a) \, \dd n \, \dd a,
\end{align*} 
where the third equality is due to the change of variables $n \mapsto a_0 n a_0^{-1}$ with determinant $\rho (a_0)$. Since we have
 \begin{align*}
da = \prod_{i = 1}^n \frac{d a_i}{a_i},
\end{align*} 
the lemma follows.
\end{proof}
We are now ready to prove Theorem \ref{aaalphaint}.
\begin{proof}[Proof of Theorem \ref{aaalphaint}]
For $r = 1, \ldots, 2d$, let us introduce the function $\alpha_r$ on $G$ given by
\begin{align*}
\alpha_r \lp g \rp = \sup \lbr \left| V / ( V \cap g \Z^{2d} ) \right|^{-1} : {\small \begin{array}{l}
V \cap g \Z^{2d} \text{ is a lattice in a sub-}\\
\text{space $V \subset \R^{2d}$, $\text{dim } V = r$}
\end{array}} \rbr.
\end{align*}
To prove the claim, it is enough to show that for arbitrary $r$, the function $\alpha_r$ belongs to $L^p \lp G / \Gamma \rp$ for the mentioned values of $p$. To this end, in view of Lemma \ref{alphabound}, Lemma \ref{shapeoflattice}, and Lemma \ref{haarmeasure} it is enough to show that 
 \begin{align}\label{showthis}
\int_{{\bf A}_t} \prod_{i = 1}^r \mathrm{max}_i \lbr a_1^{\pm p}, \ldots, a_d^{\pm p} \rbr  \prod_{i = 1}^{d} a_i^{2(d-i)+1} \hsp \dd a_1 \cdots \dd a_d < \infty,
\end{align} 
where $\mathrm{max}_i$ denotes the $i$'th largest element of the set in question. \par Observe that as functions of $a_1, \ldots, a_d$
 \begin{align}\label{neatlittletrick}
\prod_{i = 1}^r \mathrm{max}_i \lbr a_1^{\pm p}, \ldots, a_d^{\pm p} \rbr < \sum a_1^{e_1 p} a_2^{e_2 p} \cdots a_d^{e_d p} < 3^d \prod_{i = 1}^r \mathrm{max}_i \lbr a_1^{\pm p}, \ldots, a_d^{\pm p} \rbr,
\end{align} 
the sum extending over all $d$-tuples $\lp e_1, \ldots, e_d \rp \in \lbr 0, \pm 1 \rbr^d$ with exactly $\min \lbr r, 2d-r \rbr$ non-zero entries. We therefore let $a_1^{e_1 p} a_2^{e_2 p} \cdots a_d^{e_d p}$ be an arbitrary monomial in the above sum and show that 
 \begin{align}\label{arbitrarymonomial}
\int_{{\bf A}_t} a_1^{e_1 p} a_2^{e_2 p} \cdots a_d^{e_d p} \prod_{i = 1}^{d} a_i^{2(d-i)+1} \hsp \dd a_1 \cdots \dd a_d < \infty.
\end{align} 
This will imply (\ref{showthis}).\par 
By the definition of ${\bf A}_t$, we see that the left-hand side of (\ref{arbitrarymonomial}) equals
 \begin{align*}
&\int_{{\bf A}_t} \prod_{i = 1}^{d} a_i^{e_i p + 2(d-i)+1} \hsp \dd a_1 \cdots \dd a_d 
\\ &\quad = \int_0^t \int_0^{t a_d} \int_0^{t a_{d-1}} \cdots \int_0^{t a_2} \prod_{i = 1}^{d} a_i^{e_i p + 2(d-i)+1} \hsp \dd a_1 \cdots \dd a_d \\
&\quad \ll_t \int_0^t \int_0^{t a_d} \int_0^{t a_{d-1}} \cdots \int_0^{t a_3} a_2^{(e_1 + e_2)p + 4d- 3} \prod_{i = 3}^{d} a_i^{e_i p + 2(d-i)+1} \hsp \dd a_2 \cdots \dd a_d,
\end{align*} 
if we have $e_1 p + 2(d-1)+1 \geqslant 0$. Analogously, if $(e_1 + e_2) p + 4d-3 \geqslant 0$, then the right-hand side is 
 \begin{align*}
\ll_t \int_0^t \int_0^{t a_d} \int_0^{t a_{d-1}} \cdots \int_0^{t a_4} a_3^{(e_1 + e_2 + e_3)p + 6d- 7 } \prod_{i = 4}^{d} a_i^{e_i p + 2(d-i)+1} \hsp \dd a_3 \cdots \dd a_d.
\end{align*} 
Continuing inductively, assuming that all the successively resulting exponents
 \begin{align*}
e_1 p + 2d - 1, \quad (e_1 + e_2)p + 4d - 3, \quad (e_1 + e_2 + e_3)p + 6d - 7, \quad \ldots
\end{align*} 
of $a_1, a_2, \ldots, a_{i-1}$ are non-negative, we find that the integral with respect to $a_i$ converges if the exponent of $a_i$ is non-negative, i.e. if
 \begin{align}\label{nicecondition}
2id - i^2 + i - 1 + (e_1 + \cdots + e_i) p \geqslant 0.
\end{align} 
In particular, if (\ref{nicecondition}) holds for $i = 1, \ldots, d$, then we obtain (\ref{arbitrarymonomial}). Note that for any $i$ with $e_1 + \ldots + e_i \geqslant 0$, (\ref{nicecondition}) is definitely satisfied, so assume that $i \in  \lbr 1, \ldots, d \rbr$ is such that $e_1 + \cdots + e_i < 0$. Then (\ref{nicecondition}) is satisfied if and only if
 \begin{align*}
p \leqslant \left| e_1 + \cdots + e_i \right|^{-1} \lp 2id - i^2 + i -1 \rp.
\end{align*} 
Since $|e_1 + \cdots + e_i | \leqslant i$, this is certainly true if
 \begin{align*}
p \leqslant i^{-1} (2id - i^2 + i - 1).
\end{align*} 
This inequality is satisfied when $p = 1, \ldots, d$. This proves (\ref{arbitrarymonomial}), and the theorem follows.
\end{proof}
\textsc{Remark.} This result is optimal in the sense that $\alpha \not \in L^{p} \lp \Y \rp$ for $p = d+1$. This claim will follow if, for example, $\alpha_{d} \not \in L^{d+1} \lp \Y \rp$. This, in turn, will follow by (\ref{neatlittletrick}) if we can prove that with $\lp e_1, \ldots, e_d \rp = (-1, \ldots, -1)$,
 \begin{align*}
\int_{{\bf A}_t} a_1^{e_1 p} a_2^{e_2 p} \cdots a_d^{e_d p} \prod_{i = 1}^{d} a_i^{2(d-i)+1} \hsp \dd a_1 \cdots \dd a_d = \infty.
\end{align*} 
To this end, it is enough to see that the sequence of successively resulting exponents considered in the proof will have to contain numbers less than or equal to $-1$, and indeed, our assumptions imply that for $i = 1, \ldots, d$, the $i$'th such exponent is
 \begin{align*}
2id - i^2 + i - 1 + (e_1 + \cdots + e_i)p = 2id - i^2 + i - 1 - i(d+1) = - i^2 + id - 1,
\end{align*} 
which equals $-1$ for $i = d$. \par Theorem \ref{aaalphaint} is therefore natural in the following sense: As in {\cite[Lemma 3.10]{BG2-ref4}}, the integrability properties of $\alpha$ depend on the dimension of the Cartan subgroup ${\bf A}$ in the Iwasawa decomposition.
\section{A Central Limit Theorem for Functions in $C_c^\infty \lp \Y \rp$}
The goal of this section is to prove the following intermediate theorem, which states that we do indeed have a central limit theorem for averages $F_N$ of translations of a smooth and compactly supported function on $Y_{2d}$ as defined in (\ref{smoothfcts}). \par 
For the remainder of this article, we will use the notation 
 \begin{align*}
\mu \lp \phi \rp := \int_{\Y} \phi \hsp \dd \mu,
\end{align*} 
where $\phi$ is an integrable function on $\Y$. We will also write 
 \begin{align*}
\mathrm{Leb}(f) := \int_{\R^{2d}} f(\textbf{x}) \, \dd \textbf{x}
\end{align*} 
where $f$ is an integrable function on $\R^{2d}$.
\begin{theorem}\label{cltforsmoothfcts}
Let $\phi \in C_c^\infty \lp \Y \rp$, let $a = \mathrm{diag} \lp e^t, \ldots, e^t, e^{-t}, \ldots, e^{-t} \rp$ for some $t > 0$, and define 
 \begin{align*}
\psi_m := \phi \circ a^m - \mu \lp \phi \rp, \quad \quad F_N := \frac{1}{\sqrt{N}} \sum_{m = 0}^{N-1} \psi_m.
\end{align*} 
Then there is $\sigma \geqslant 0$ such that as $N \longrightarrow \infty$, 
 \begin{align*}
F_N \Longrightarrow N\lp 0, \sigma^2 \rp.
\end{align*} 
\end{theorem}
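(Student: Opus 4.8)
The plan is to verify the three hypotheses of the Fréchet--Shohat criterion (Theorem \ref{cltcriterion}) for the sequence $\lbr F_N \rbr$. This is permissible because each $F_N$ is bounded: since $\phi \circ a^m$ has the same sup-norm as $\phi$, we have $\Vert \psi_m \Vert_\infty \leqslant 2 \Vert \phi \Vert_\infty$, hence $\Vert F_N \Vert_\infty \leqslant 2 \sqrt{N}\, \Vert \phi \Vert_\infty < \infty$. The normalization $\mu(\psi_m) = 0$ gives $\mu(F_N) = 0$ immediately.

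For the variance we expand $\mu(F_N^2) = \tfrac1N \sum_{m,n=0}^{N-1} \mu(\psi_m \psi_n)$. The $a$-invariance of $\mu$ shows that $\mu(\psi_m \psi_n)$ depends only on $k = |m-n|$; calling this $c_k$, we have $c_k = \mu\lp (\phi - \mu(\phi)) \cdot (\phi - \mu(\phi)) \circ a^k \rp$. Applying the quantitative correlation estimate of Theorem \ref{BEGmain} to the \emph{fixed} smooth function $\phi - \mu(\phi)$, and using Lemma \ref{adjointeigenvalues} to see that $\Vert a^k \Vert_{\mathrm{op}}$ grows geometrically in $k$, one obtains $|c_k| \ll_\phi \lambda^{-k}$ for some $\lambda > 1$. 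Hence $\sum_{k \geqslant 0} |c_k| < \infty$, and $\mu(F_N^2) = c_0 + 2 \sum_{k=1}^{N-1} (1 - k/N) c_k \longrightarrow c_0 + 2 \sum_{k \geqslant 1} c_k =: \sigma^2$, a finite number that is $\geqslant 0$ as a limit of non-negative quantities.

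The heart of the argument is that $\cum_r(F_N) \to 0$ for every $r \geqslant 3$. By the $r$-linearity of $\cum_r$ (Remark after Definition \ref{cumdef}), $\cum_r(F_N) = N^{-r/2} \sum_{m_1, \ldots, m_r = 0}^{N-1} \cum_r(\psi_{m_1}, \ldots, \psi_{m_r})$, so it suffices to prove $\sum_{m_1, \ldots, m_r = 0}^{N-1} |\cum_r(\psi_{m_1}, \ldots, \psi_{m_r})| \ll_r N$, which is $o(N^{r/2})$ when $r \geqslant 3$. To bound one cumulant, observe first that every integral in Definition \ref{cumdef} is taken against the $a$-invariant measure $\mu$, so $\cum_r(\psi_{m_1}, \ldots, \psi_{m_r})$ is unchanged under a common integer shift of all $m_i$ and under permuting them; we may thus assume $0 = m_1 \leqslant \cdots \leqslant m_r$. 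Fix a clustering scale $\ell$ and let $\mathcal{Q}_\ell$ be the partition of $\lbr 1, \ldots, r \rbr$ into maximal blocks of indices whose consecutive sorted $m$-values differ by less than $\ell$. Two facts then drive the estimate. First, if $\mathcal{Q}_\ell$ has at least two blocks, then $\cum_r(\psi_{m_1}, \ldots, \psi_{m_r} \mid \mathcal{Q}_\ell) = 0$; this is the classical vanishing of a cumulant whose variables split into two mutually independent groups, visible directly from the formula for the conditional cumulant in Definition \ref{cumdef}. Second, expanding both $\cum_r(\psi_{m_1}, \ldots, \psi_{m_r})$ and $\cum_r(\psi_{m_1}, \ldots, \psi_{m_r} \mid \mathcal{Q}_\ell)$ over partitions and telescoping, their difference is a sum of terms each of which is a multiple-mixing error \emph{between} the blocks of $\mathcal{Q}_\ell$: applying Theorem \ref{BEGmain} to the (recentered) cluster-product functions --- whose $S_q$-norms are controlled via Lemma \ref{propertiesofsq-norm} by the small block diameters, while the block separations exceed $\ell$ --- each such term is $\ll_{r, \phi} \lambda^{-c\ell}$ for a suitable $c > 0$. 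Combining this with the trivial bound $|\cum_r(\psi_{m_1}, \ldots, \psi_{m_r})| \ll_r \Vert \phi \Vert_\infty^r$ (which handles, in particular, the case where $\mathcal{Q}_\ell$ is a single block, so the sorted tuple lies in a window of length $< r\ell$), and choosing $\ell$ as in the combinatorial scheme of \cite{BG1}, one arrives at a bound for $|\cum_r(\psi_{m_1}, \ldots, \psi_{m_r})|$ that decays in the largest sorted gap of $(m_1, \ldots, m_r)$. Since the number of tuples in $\lbr 0, \ldots, N-1 \rbr^r$ whose largest sorted gap is $\approx \ell$ is $\ll_r N \ell^{r-1}$, summing a geometric-type series gives $\sum_{m_1, \ldots, m_r} |\cum_r(\psi_{m_1}, \ldots, \psi_{m_r})| \ll_{r, \phi} N$. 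Theorem \ref{cltcriterion} then yields $F_N \Longrightarrow N(0, \sigma^2)$.

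The main obstacle lies in making the second fact above precise. Naïvely, $S_q(\phi \circ a^m)$ grows like $\Vert a^m \Vert_{\mathrm{op}}^q \gg \lambda^{mq}$ (Lemma \ref{propertiesofsq-norm}(i), Lemma \ref{adjointeigenvalues}), so a crude bound on the partition-expansion terms blows up exponentially in the overall spread $m_r$ --- which may be of order $N$ --- and this would overwhelm the mixing decay $\lambda^{-c\ell}$. The resolution, which is exactly the combinatorial technique of \cite{BG1}, exploits that cumulants carry far more cancellation than bare correlations: one never estimates a correlation of widely separated translates directly, but only the \emph{difference} between a true correlation and its block-wise factorization, and these differences are organized over a hierarchy of scales so that each invocation of Theorem \ref{BEGmain} involves only functions whose Sobolev norms are tied to a single cluster diameter, each such diameter being paid for by a mixing factor already extracted at a finer scale. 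Checking that the resulting exponents balance --- an inequality in the spirit of (\ref{apr22-ineq}) --- is where the real work is; here it goes through for all $r$ because $\phi$ is a fixed smooth function, so all of its Sobolev norms $S_q(\phi)$ are finite and the clustering scale may be taken to grow slowly with the spread.
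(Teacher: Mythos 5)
Your overall strategy is the paper's: verify Fréchet--Shohat, use $a$-invariance plus exponential mixing for the variance, and use the BG1-style clustering to kill the higher cumulants. The variance step is correct and matches the proof of (\ref{smoothf-finitevariance}), and you correctly identify the two crucial mechanisms for the cumulant estimate --- recentering each cluster so that the Sobolev norms depend only on cluster \emph{diameters} (not on the overall spread of the tuple), and comparing $\cum_r$ against the vanishing conditional cumulant $\cum_r(\cdot \mid \mathcal{Q})$.

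However, the specific implementation you describe has a gap. You claim a pointwise bound $|\cum_r(\psi_{m_1}, \ldots, \psi_{m_r})| \ll \lambda^{-c\ell}$ in the largest sorted gap $\ell$, obtained by clustering at scale $\ell$. This is false in general: take $r = 5$ and the tuple $(0, \ell/3, 2\ell/3, \ell, 2\ell)$. The largest sorted gap is $\ell$, and clustering at scale $\ell$ gives one block $\lbr 1,2,3,4 \rbr$ of diameter $\ell$ and the singleton $\lbr 5 \rbr$, separated by $\ell$. Running the estimate in the proof --- with cluster-diameter bound $\alpha = \ell$ and separation $\beta = \ell$ --- yields an error $\asymp \max\lbr 1, S_q(\phi)^r\rbr\, e^{(q\xi r - \delta)\ell}$, which does not decay unless $\delta > q\xi r$ (a hypothesis Theorem \ref{BEGmain} does not supply). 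The point is that clustering at the largest-gap scale does not guarantee that the inter-cluster separation dominates the cluster diameters. This is exactly what Proposition \ref{decomposition} buys: it covers $\lbr 0, \ldots, N-1 \rbr^r$ by pieces $\Omega_{\mathcal{Q}}(\alpha_j, \beta_{j+1}, N)$ where cluster diameters are $\leqslant \alpha_j$ and separations exceed $\beta_{j+1}$, and the recursion (\ref{choiceofbeta}) forces $\beta_{j+1}$ to dominate $\alpha_j$ by a factor large enough that $\delta'\beta_{j+1} - q\xi\alpha_j r \geqslant \delta'\gamma > 0$. Because those scales are fixed (depending on $N$ through $\gamma \sim \log N$, not on the tuple), the paper uses the trivial count $N^r$ on the non-clustered pieces and arrives at $\cum_r(F_N) \ll N^{1-r/2}(\log N)^{r-1} + N^{-r/2}$, not your claimed $\sum_{\textbf{m}}|\cum_r| \ll N$. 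Your stronger pointwise claim could perhaps be recovered by a genuinely tuple-adaptive spectral-gap argument (pigeonholing the $r-1$ gaps into geometric annuli and clustering at an empty annulus near the top), but that is not the scheme of \cite{BG1}, and would require its own proof; as written, your citation of BG1 does not support the bound you then use in the counting step.
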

\noindent \textsc{Remark.} In the event that $\sigma = 0$, the resulting distribution $N(0,0)$ is to be understood as the Dirac distribution at $0$.\\ \par 
It is immediate from the definition of $F_N$ that it integrates to $0$. Hence, using Theorem \ref{cltcriterion} to show that $F_N \Longrightarrow N\lp 0, \sigma^2 \rp$, we only need to demonstrate that 
 \begin{align}
\lim_{N \rightarrow \infty} \int_{\Y} F_N^2 \hsp \dd \mu &< \infty, \label{smoothf-finitevariance}
\end{align} 
and that
 \begin{align}
\lim_{N \rightarrow \infty} \cum_r \lp F_N \rp &= 0 \label{smoothf-cumtozero}
\end{align} 
for all $r \geqslant 3$. We note that (\ref{smoothf-finitevariance}) can be demonstrated without too much trouble as follows: One has 
 \begin{align*}
\int_{\Y} F_N^2 \hsp \dd \mu &= \frac{1}{N} \sum_{m = 0 \atop n = 0}^{N-1} \int_{\Y} \psi_{m-n}  \psi_0 \hsp \dd \mu = \frac{1}{N} \sum_{\pm s = 0}^{N-1} \lp N - \left|s \right| \rp \int_{\Y} \psi_s \psi_0 \hsp \dd \mu \\
&= \sum_{s \in \Z} \mathbbm{1} \lp 1-N \leqslant s \leqslant N-1 \rp \lp 1 - \frac{|s|}{N} \rp \int_{\Y} \psi_s \psi_0 \hsp \dd \mu.
\end{align*} 
The action $G \curvearrowright \Y$ is mixing with an exponential rate, cf. \cite[Thm. 1.1]{BEG} (see also Theorem \ref{BEGmain} below). Therefore, for any $N$, the series above is dominated termwise by an absolutely convergent series, in which case the theorem of dominated convergence shows that 
 \begin{align*}
\lim_{N \rightarrow \infty} \int_{\Y} F_N^2 \hsp \dd \mu = \sum_{s \in \Z} \, \int_{\Y} \psi_s \psi_0 \hsp \dd \mu = \sum_{s \in \Z} \lp \int_{\Y} \phi \cdot \lp \phi \circ a^s \rp - \mu \lp \phi \rp^2 \hsp \dd \mu \rp,
\end{align*} 
which is finite, again according to \cite[Thm. 1.1]{BEG}. This proves (\ref{smoothf-finitevariance}).
\subsection{Partitioning $r$-tuples of Natural Numbers}
In order to apply Theorem \ref{cltcriterion} to deduce Theorem \ref{cltforsmoothfcts}, all that remains is for us to demonstrate (\ref{smoothf-cumtozero}) for all $r \geqslant 3$. To this end, we will use the combinatorial tool given in Proposition \ref{decomposition}, due to Björklund and Gorodnik \cite{BG1}, which allows us to partition the natural numbers in a way that considerably facilitates our subsequent use of the quantitative correlation estimate given in Theorem \ref{BEGmain} below. \par 
Using the $r$-linearity of $\cum_r$, we find that 
 \begin{align}\label{cumalternative}
\cum_r \lp F_N \rp = \frac{1}{N^{r/2}} \sum_{\substack{m_1 = 0 \\ \cdots \\ m_r = 0}}^{N-1} \cum_r \lp \psi_{m_1}, \ldots, \psi_{m_r} \rp.
\end{align} 
We want to decompose the set over which the summation occurs using the following result, which is a special case of \cite[Prop. 6.2]{BG1} with $H = \Z$, cf. also \cite[Eq. (3.6)]{BG2}.
\begin{prop}[{\cite[Prop. 6.2]{BG1}}]\label{decomposition}
Suppose that $r \geqslant 3$ is an integer. Given $0 \leqslant \alpha < \beta$ and a partition $\mathcal{Q}$ of $\lbrace 1, \ldots, r \rbrace$, define
 \begin{align*}
\Delta(\alpha) = \lbr \textbf{s} \in \Z_+^{r} : |s_i - s_j | \leqslant \alpha \hsp \text{ for all $i,j$}\rbr
\end{align*} 
and
 \begin{align*}
\Delta_\mathcal{Q}\lp \alpha, \beta \rp &= \lbr \textbf{s} \in \Z_+^{r} : \max_{I \in \mathcal{Q}} \max_{i,j \in I} \lbr |s_i - s_j| \rbr \leqslant \alpha \, , \hsp \mathrm{ and } \hsp  
\min_{I, J \in \mathcal{Q} \atop I \neq J} \min_{i \in I \atop j \in J} \lbr |s_i - s_j| \rbr > \beta \rbr.
\end{align*} 
Then, given $0 = \alpha_0 < \beta_1 < \alpha _1 = (3+r) \beta_1 < \beta_2 < \cdots < \alpha_{r-1} = (3+r) \beta_{r-1} < \beta_{r}$, we have
 \begin{align*}
\Z_+^{r} = \Delta \lp \beta_{r} \rp \cup \lp \bigcup_{j = 0}^{r-1} \hsp \bigcup_{\# \mathcal{Q} \geqslant 2} \Delta_\mathcal{Q} \lp \alpha_j, \beta_{j+1} \rp \rp,
\end{align*} 
where the final union is taken over all partitions $\mathcal{Q}$ of $\lbr 1, \ldots, r \rbr$ with at least two parts.
\end{prop}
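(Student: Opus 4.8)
The plan is to prove the identity by showing that every $\textbf{s} = \lp s_1, \ldots, s_r \rp \in \Z_+^{r}$ lies in one of the sets on the right-hand side; the reverse containment is trivial, since all those sets sit inside $\Z_+^{r}$. The organizing tool is single-linkage clustering of the coordinates of $\textbf{s}$. For a threshold $t \geqslant 0$ let $P(t)$ be the partition of $\lbr 1, \ldots, r \rbr$ in which $i$ and $j$ share a block exactly when there is a chain $i = i_0, i_1, \ldots, i_\ell = j$ with $|s_{i_m} - s_{i_{m+1}}| \leqslant t$ for all $m$. Two elementary observations are all I will need. First, if $I$ is a block of $P(t)$ with $|I| = k$, then $\max_{i,j \in I} |s_i - s_j| \leqslant (k-1)t$: listing the values $\lbr s_i : i \in I \rbr$ in increasing order, each consecutive pair differs by at most $t$ (otherwise the block would disconnect), and the total spread is the sum of these consecutive differences. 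Second, if $i$ and $j$ lie in distinct blocks of $P(t)$, then $|s_i - s_j| > t$, for otherwise they would be directly linked. Note also that $P(t)$ only coarsens as $t$ increases.

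Next I run a pigeonhole argument adapted to the prescribed chain of thresholds. Put $\gamma_0 := \alpha_0 = 0$ and $\gamma_\ell := \beta_\ell$ for $\ell = 1, \ldots, r$, so that $0 = \gamma_0 < \gamma_1 < \cdots < \gamma_r$ are $r+1$ thresholds giving the $r$ consecutive pairs $\lp \gamma_j, \gamma_{j+1} \rp$, $j = 0, \ldots, r-1$. Each index $j$ with $P(\gamma_j) \neq P(\gamma_{j+1})$ forces $\# P(\gamma_{j+1}) \leqslant \# P(\gamma_j) - 1$, because $P(\gamma_{j+1})$ is then a strictly coarser partition. If this inequality held for all $r$ values of $j$, we would get $\# P(\gamma_r) \leqslant \# P(\gamma_0) - r \leqslant r - r = 0$ (using that $P(\gamma_0)$ has at most $r$ blocks), which is absurd. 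Hence there is some $j \in \lbr 0, \ldots, r-1 \rbr$ with $P(\gamma_j) = P(\gamma_{j+1}) =: \mathcal{Q}$, and I finish by splitting on $\# \mathcal{Q}$.

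If $\# \mathcal{Q} = 1$, then $\textbf{s}$ lies entirely in one block of $P(\gamma_j)$ and $\gamma_j \leqslant \beta_{r-1}$, so the first observation gives $\max_{i,j} |s_i - s_j| \leqslant (r-1)\gamma_j \leqslant (r-1)\beta_{r-1} < (3+r)\beta_{r-1} = \alpha_{r-1} < \beta_r$ (the bound being $0$ when $j = 0$); thus $\textbf{s} \in \Delta(\beta_r)$. If $\# \mathcal{Q} \geqslant 2$, I claim $\textbf{s} \in \Delta_\mathcal{Q}\lp \alpha_j, \beta_{j+1} \rp$, which then appears in the union since $\mathcal{Q}$ has at least two parts. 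Indeed, using $\mathcal{Q} = P(\gamma_j)$: for a block $I$ of size $k$ the first observation gives $\max_{i,j \in I} |s_i - s_j| \leqslant (k-1)\gamma_j \leqslant (r-1)\beta_j < (3+r)\beta_j = \alpha_j$ (and $= 0 \leqslant \alpha_0$ when $j = 0$); and using $\mathcal{Q} = P(\gamma_{j+1})$: for $i, j'$ in distinct blocks the second observation gives $|s_i - s_{j'}| > \gamma_{j+1} = \beta_{j+1}$.

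I expect the only real obstacle to be the scale bookkeeping: one must measure within-block spread at the smaller threshold $\beta_j$ (or $0$) but across-block separation at the next threshold $\beta_{j+1}$, and the factor $3 + r$ in $\alpha_j = (3+r)\beta_j$ is exactly chosen so that the worst-case block diameter $(r-1)\beta_j$ still fits under $\alpha_j$; combined with the fact that a partition of an $r$-element set can be properly coarsened at most $r-1$ times, this makes the pigeonhole close. Using single-linkage at scale $\alpha_j$ directly would only bound block diameters by $(r-1)\alpha_j$ and would fail, so arranging all the inequalities to hold simultaneously with the right choice of $\mathcal{Q}$ is the heart of the matter.
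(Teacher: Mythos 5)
The paper does not give its own proof of this proposition; it is quoted verbatim from \cite[Prop.\ 6.2]{BG1}, so there is nothing in the source to compare your argument against. Judged on its own, your proof is correct and self-contained, and the approach (single-linkage clustering at each scale $\gamma_j$, pigeonhole on the $r$ transitions between the $r+1$ nested partitions of an $r$-element set, then reading off the within-block diameter bound $(k-1)\gamma_j < (3+r)\beta_j = \alpha_j$ against the cross-block separation $>\gamma_{j+1} = \beta_{j+1}$) is the standard mechanism behind such multi-scale decompositions, including the one in \cite{BG1}. Two details worth being aware of when you write this up: first, the connected-component argument for the diameter bound silently uses that a chain linking two indices of a block must stay inside that block, which is immediate from the definition of a connected component but should be said; second, the $j=0$ case is slightly degenerate (the within-block bound is an equality $0 \leqslant \alpha_0 = 0$ rather than a strict inequality), and you correctly flag it. The only cosmetic blemish is the reuse of the symbol $j$ both as the pigeonhole index and as a dummy index ranging over a block $I$, but the intent is clear from context.
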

\noindent By intersecting the decomposition given by Proposition \ref{decomposition} with the set over which the summation in (\ref{cumalternative}) takes place, we get the sets
 \begin{align*}
\Omega \lp \beta_{r }, N \rp &:= \lbr 0, \ldots, N-1 \rbr^r \cap \Delta \lp \beta_{r } \rp, 
\\ 
\Omega_\mathcal{Q} \lp \alpha_j, \beta_{j+1}, N \rp &:= \lbr 0, \ldots, N-1 \rbr^r \cap \Delta_\mathcal{Q} \lp \alpha_j, \beta_{j+1} \rp,
\end{align*} 
and the decomposition
 \begin{align}\label{Ndecomposition}
\lbrace 0, \ldots, N-1 \rbrace^r = \Omega \lp \beta_{r }, N \rp \cup \lp \bigcup_{j = 0}^{r-1} \hsp \bigcup_{\# \mathcal{Q} \geqslant 2} \Omega_\mathcal{Q} \lp \alpha_j, \beta_{j+1}, N \rp \rp.
\end{align} 
\subsection{Estimating the Cumulants $\mathrm{cum}_r \lp F_N \rp$}
It follows from (\ref{Ndecomposition}) that the summands in (\ref{cumalternative}) can be partitioned into two categories, depending on where the corresponding indices $\textbf{m} = (m_1, \ldots, m_r)$ lie in the decomposition (\ref{Ndecomposition}). \par 
If we only allow the index $\textbf{m}$ in (\ref{cumalternative}) to run over $\Omega \lp \beta_{r }, N \rp$, the resulting contribution $A$ to $\cum_r (F_N)$ satisfies
 \begin{align}\label{shouldhavegonetolouisnielsen}
A \ll_r N^{1-r/2} \beta_{r }^{r-1} \left\Vert \phi \right\Vert_\infty^r.
\end{align} 
After all, there are $N$ ways to choose the largest coordinate of \textbf{m}, and the remaining $r-1$ coordinates are then confined to lying in an interval of length $\beta_{r }$, whence
 \begin{align*}
\# \Omega \lp \beta_{r }, N \rp \ll_r N \beta_{r }^{r-1}.
\end{align*} 
Moreover, the cumulant $\cum_r \lp \psi_{m_1}, \ldots, \psi_{m_r} \rp$ satisfies
 \begin{align}\label{cumest}
\nonumber \left| \cum_r \lp \psi_{m_1}, \ldots, \psi_{m_r} \rp \right| &\leqslant \sum_{\mathcal{P}} (\# \mathcal{P} - 1)! \prod_{I \in \mathcal{P}} \hspace{0.05cm} \int_{\Y} \prod_{i \in I} \left| \phi  \circ a^{m_i} - \mu \lp \phi \rp \right| \hsp \dd \mu \\
&\ll_r \sum_{\mathcal{P}} \prod_{I \in \mathcal{P}} \prod_{i \in I} \Vert \phi \Vert_\infty \ll_r \Vert \phi \Vert_\infty^r.
\end{align} 
\par Next, if $\textbf{m} \in \Omega_\mathcal{Q} \lp \alpha_j, \beta_{j+1}, N \rp$ with $\# \mathcal{Q} \geqslant 2$, we can estimate the resulting contribution to $\cum_r \lp \psi_{m_1}, \ldots, \psi_{m_r} \rp$ with the help of the following quantitative correlation estimate, which is a corollary of \cite[Thm. 1.1]{BEG}. 
\begin{theorem}\label{BEGmain}
Let $r \geqslant 2$ be an integer. Then there is a $\delta > 0$ such that for all $\phi_1, \ldots, \phi_r \in C_c^\infty \lp \Y \rp$ and $g_1, \ldots, g_r \in G$,
 \begin{align*}
\int_{Y_{2d}} \lp \phi_1 \circ g_1 \rp \cdots \lp \phi_r \circ g_r \rp \hsp \dd \mu = \prod_{i = 1}^r \lp \int_{\Y} \phi_i \hsp \dd \mu \rp + O_{q,r} \lp e^{-\delta \min_{i \neq j} \rho_G \lp g_i, g_j \rp} S_q \lp \phi_1 \rp \cdots S_q \lp \phi_r \rp \rp,
\end{align*} 
where the minimum is taken over all $(i,j) \in \lbr 1, \ldots r \rbr^2$ with $i \neq j$.
\end{theorem}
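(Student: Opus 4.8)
The plan is to obtain Theorem \ref{BEGmain} as a direct consequence of the quantitative multiple mixing theorem of Björklund, Einsiedler and Gorodnik \cite[Thm. 1.1]{BEG}. Its hypotheses hold in our situation: $G=\SP(2d,\R)$ is a connected semisimple Lie group with finite centre and no compact factors, $\Gamma=\SP(2d,\Z)$ is a lattice in it, and the action $G\curvearrowright\Y=G/\Gamma$ has a spectral gap on $L^2_0(\Y)$ (for $d\ge2$ this follows from property (T) of $G$). That result provides, for a fixed Sobolev order, a ``base'' function $f_0$ and translates $f_1\circ h_1,\dots,f_{r-1}\circ h_{r-1}$, an estimate of the shape
\[
\Bigl|\int_{\Y}f_0\cdot(f_1\circ h_1)\cdots(f_{r-1}\circ h_{r-1})\,\dd\mu-\prod_{i=0}^{r-1}\int_{\Y}f_i\,\dd\mu\Bigr|\ll e^{-\delta\min_{0\le i<j\le r-1}\rho_G(h_i,h_j)}\prod_{i=0}^{r-1}S_q(f_i),
\]
with the convention $h_0=e$, and with their Sobolev norm in place of $S_q$ (see below).

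To put our symmetric statement in this form, I use the $G$-invariance of $\mu$: substituting $\Lambda\mapsto g_r\Lambda$ yields
\[
\int_{\Y}(\phi_1\circ g_1)\cdots(\phi_r\circ g_r)\,\dd\mu=\int_{\Y}\bigl(\phi_1\circ g_1g_r^{-1}\bigr)\cdots\bigl(\phi_{r-1}\circ g_{r-1}g_r^{-1}\bigr)\cdot\phi_r\,\dd\mu.
\]
Now apply the cited estimate with $f_0=\phi_r$, $f_i=\phi_i$ and $h_i=g_ig_r^{-1}$ for $i=1,\dots,r-1$. By the right-invariance of $\rho_G$ one has $\rho_G(h_i,h_j)=\rho_G(g_i,g_j)$ for $1\le i,j\le r-1$ and $\rho_G(h_i,h_0)=\rho_G(g_ig_r^{-1},e)=\rho_G(g_i,g_r)$, so the exponent in the error term is exactly $\min_{i\ne j}\rho_G(g_i,g_j)$ over all $(i,j)\in\{1,\dots,r\}^2$ with $i\ne j$, while the main term is $\prod_{i=1}^r\int_{\Y}\phi_i\,\dd\mu$. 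This is the assertion of the theorem.

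Two points are routine but require care. First, the distance function appearing in \cite{BEG} is a priori not our right-invariant Riemannian distance $\rho_G$ but (typically) a logarithmic operator size or a Cartan-projection norm; since all of these are comparable to $\rho_G(\cdot,e)$ up to additive and multiplicative constants on $G$, interchanging them only alters $\delta$ and the implied constant. Second, one must check that for $q$ large enough (depending only on $d$) the norm $S_q$ of Definition \ref{sobnorm} dominates the fixed-order Sobolev norm used in \cite{BEG}; this holds because $\mu$ is a probability measure, so $L^\infty$ norms bound $L^2$ norms, together with the standard comparison between Sobolev norms built from different bases of $\mathcal{U}(\mathfrak{g})$. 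The only non-formal input is \cite[Thm. 1.1]{BEG} itself; everything else is bookkeeping, and the main thing to watch is that the change of variables and the right-invariance of $\rho_G$ are used correctly, so that no factor of $\Vert g_i\Vert_{\mathrm{op}}$ — which would be exponentially large in $\rho_G(g_i,e)$ and hence ruinous — is introduced into the bound.
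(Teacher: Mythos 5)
Your proposal is correct and matches the paper's approach: the paper gives Theorem \ref{BEGmain} simply by citing \cite[Thm. 1.1]{BEG} as a corollary and adding a remark that the Sobolev norms in \cite{BEG} may be replaced by any family satisfying their five axioms \cite[Eq. (1.9)--(1.13)]{BEG}, which is exactly the content (made more explicit) of your argument. One small remark: \cite[Thm. 1.1]{BEG} is already stated in the symmetric form with arbitrary $g_1, \ldots, g_r$ and error controlled by $\min_{i\neq j}$ of a distance between the $g_i$'s, so the right-translation by $g_r^{-1}$ that you perform is harmless but unnecessary.
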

\noindent \textsc{Remark.} The original version \cite[Thm. 1.1]{BEG} of Theorem \ref{BEGmain} is formulated in terms of a different family of Sobolev norms. However, it is stated \cite[p. 6]{BEG} that the theorem may be formulated in terms of any family of norms satisfying the five conditions \cite[Eq. (1.9)-(1.13)]{BEG}. \\ \par 
Now, let $I \subset \lbr 1, \ldots, r \rbr$ be a non-empty subset, and suppose that $I$ contains numbers from exactly $k$ different sets in $\mathcal{Q}$. If $k = 1$, then we have
 \begin{align}\label{isconditionalcum1}
\int_{\Y} \prod_{i \in I} \shsp \psi_{m_i} \hsp \dd \mu = \prod_{J \in \mathcal{Q}} \shsp \int_{\Y} \prod_{i \in I \cap J} \psi_{m_i} \hsp \dd \mu,
\end{align} 
since in this case, there is exactly one choice of $J \in \mathcal{Q}$ such that $I \cap J \neq \varnothing$, and then we even have $I \cap J = I$. If $k \geqslant 2$, then (\ref{isconditionalcum1}) holds up to a small error. Indeed, if we let $Q_1, \ldots, Q_k$ be the non-empty sets in $\lbrace I \cap J : J \in \mathcal{Q} \rbrace$, then $I = Q_1 \sqcup \cdots \sqcup Q_k$, and therefore, if $f = \phi - \mu \lp \phi \rp$,
 \begin{align}\label{isconditionalcum2}
\nonumber \int_{\Y} \prod_{i \in I} \shsp \psi_{m_i} \hsp \dd \mu 
&= \int_{\Y} \prod_{i \in I} \hsp f \circ a^{m_i} \hsp \dd \mu 
= \int_{\Y} \prod_{j = 1}^k \lp \prod_{i \in Q_j} \shsp f \circ a^{m_i} \rp \hsp \dd \mu\\
&= \int_{\Y} \prod_{\ell = 1}^k \lp \prod_{i \in Q_\ell} \shsp f \circ a^{m_i-m_{Q_\ell}} \rp \circ a^{m_{Q_\ell}} \hsp \dd \mu,
\end{align} 
where we write $m_{Q_\ell} = \max \lbr m_i : i \in Q_\ell \rbr$ for $\ell = 1, \ldots, k$. For any such $\ell$ and for any $i \in Q_\ell$, let us write $n_{i,\ell} = m_i - m_{Q_\ell}$. Then, for each $\ell$,
 \begin{align*}
\prod_{i \in Q_\ell} \shsp f \circ a^{m_i-m_{Q_\ell}} = \sum_{K_\ell \subset Q_\ell} \lp -\mu \lp \phi \rp \rp^{\# K_\ell} g_{\ell,K_\ell}
\end{align*} 
with $g_{\ell, K_\ell} = \Pi_{i \not \in K_\ell} \phi \circ a^{n_{i,\ell}}$. We note that for each $\ell$ and $K_\ell \subset Q_\ell$, the function $g_{\ell,K_\ell}$ is in $C_c^\infty \lp \Y \rp$. Accordingly, (\ref{isconditionalcum2}) implies that
 \begin{align}\label{pullouterrorterm}
\nonumber \int_{\Y} \prod_{i \in I} \shsp \psi_{m_i} \hsp \dd \mu 
&= \int_{\Y} \prod_{\ell = 1}^k \lp \sum_{K_\ell \subset Q_\ell} \lp - \mu \lp \phi \rp \rp^{\# K_\ell}  g_{\ell,K_\ell} \rp \circ a^{m_{Q_\ell}} \hsp \dd \mu \\
\nonumber &= \int_{\Y} \sum_{K_\ell \subset \shsp Q_\ell \atop \text{for all $\ell$} } \lp - \mu \lp \phi \rp \rp^{\# K_1 + \cdots + \# K_k} \prod_{\ell = 1}^k \shsp g_{\ell, K_\ell} \circ a^{m_{Q_\ell}} \hsp \dd \mu 
\\[-2mm]
&= \sum_{K_\ell \subset \shsp Q_\ell \atop \text{for all $\ell$} } \lp - \mu \lp \phi \rp \rp^{\# K_1 + \cdots + \# K_k} \int_{\Y} \prod_{\ell = 1}^k \shsp g_{\ell, K_\ell} \circ a^{m_{Q_\ell}} \hsp \dd \mu.
\end{align} 
For any $\ell_1$ and $\ell_2$ with $\ell_1 \neq \ell_2$, $m_{Q_{\ell_1}}$ and $m_{Q_{\ell_2}}$ do not belong to the same part in $\mathcal{Q}$, and hence 
 \begin{align*}
\left| m_{Q_{\ell_1}} - m_{Q_{\ell_2}} \right| > \beta_{j+1}.
\end{align*} 
Let us assume that $M = m_{Q_{\ell_1}}-m_{Q_{\ell_2}} > 0$. Since $a = a_{t_0}$ for some $t_0 \in \R$, it now follows from \cite[Lemma 2.1]{BEG} and Lemma \ref{adjointeigenvalues} that there are numbers $\lambda = \lambda\lp t_0, d \rp > 1$, $C_1 \in \lp 0, 1 \rb$, and $C_2 > 0$ such that
 \begin{align*}
\rho_G \lp a^{m_{Q_{\ell_1}}}, a^{m_{Q_{\ell_2}}} \rp = \rho_G \lp 1, a^{M} \rp \geqslant C_1 \log \left\Vert a^M \right\Vert_\mathrm{op} -C_2 \geqslant C_1 M \log \lambda - C_2.
\end{align*} 
Therefore, with $\delta' = \delta C_1 \log \lambda > 0$, Theorem \ref{BEGmain} shows that the left-hand side of (\ref{pullouterrorterm}) is 
 \begin{align}\label{pullouterrorterm2}
\sum_{K_\ell \subset \shsp Q_\ell \atop \text{for all $\ell$} } \lp - \mu \lp \phi \rp \rp^{\# K_1 + \cdots + \# K_k} \lp \prod_{\ell = 1}^k \shsp \int_{\Y} g_{\ell,K_\ell} \hsp \dd \mu + \hsp O_{q,r} \lp e^{- \delta' \beta_{j+1}} \prod_{\ell = 1}^k \shsp S_q \lp g_{\ell,K_\ell} \rp \rp \rp.
\end{align} 
We want to pull the remainder term out of the sum. Of course, to do so, we must rid it of its dependence on the sets $K_1, \ldots, K_k$. We observe that for any fixed $\ell$ and $K_\ell$, if we write $t_\ell = \# \lp Q_\ell \setminus K_\ell \rp$, we have
 \begin{align*}
S_q \lp g_{\ell, K_\ell} \rp 
&= S_q \lp \prod_{i \in Q_\ell \setminus K_\ell} \phi \circ a^{m_i - m_{Q_\ell}} \rp 
\\
&\ll_q \prod_{i \in Q_\ell \setminus K_\ell} S_{q} \lp \phi \circ a^{m_i - m_{Q_\ell}} \rp \\
&\ll_q S_{q} \lp \phi \rp^{t_\ell} \prod_{i \in Q_\ell \setminus K_\ell} \left\Vert \lp a^{-1} \rp^{m_{Q_\ell}-m_i} \right\Vert_\mathrm{op}^{q} 
\\
&\ll S_{q} \lp \phi \rp^{t_\ell} \left\Vert a^{-1} \right\Vert_\mathrm{op}^{q \alpha_j t_\ell},
\end{align*} 
where we used (\ref{N3}), (\ref{N4}), the submultiplicativity of $\Vert \cdot \Vert_\mathrm{op}$, and the fact that for any $i \in Q_\ell \setminus K_\ell$,
 \begin{align*}
m_{Q_\ell} - m_i = \left| m_{Q_\ell} - m_i \right| \leqslant \alpha_j
\end{align*} 
by the choice of $m_{Q_\ell}$. We now find that
 \begin{align*}
\prod_{\ell = 1}^k \shsp S_q \lp g_{\ell, K_\ell} \rp 
&\ll_q \prod_{\ell = 1}^k \shsp S_{q } \lp \phi \rp^{t_\ell} \left\Vert a^{-1} \right\Vert_\mathrm{op}^{q \alpha_j t_\ell} \ll \max \lbr 1, S_{q } \lp \phi \rp^r \rbr \left\Vert a^{-1} \right\Vert_\mathrm{op}^{q \alpha_j r}.
\end{align*} 
Along with (\ref{pullouterrorterm}), this proves that with $\xi := \log \left\Vert a^{-1} \right\Vert_\mathrm{op} > 0$ and
 \begin{align*}
R &= e^{- \delta' \beta_{j+1}} \max \lbr 1, S_{q } \lp \phi \rp^r \rbr \left\Vert a^{-1} \right\Vert_\mathrm{op}^{q \alpha_j r} = \max \lbr 1, S_{q } \lp \phi \rp^r \rbr e^{- \lp \delta' \beta_{j+1} - q \xi \alpha_j r \rp },
\end{align*} 
one has
 \begin{align*}
\int_{\Y} \prod_{i \in I} \shsp \psi_{m_i} \hsp \dd \mu &= \sum_{K_\ell \subset \shsp Q_\ell \atop \text{for all $\ell$}} \lp - \mu \lp \phi \rp \rp^{\# K_1 + \cdots + \# K_k} \prod_{\ell = 1}^k \shsp \int_{\Y} g_{\ell,K_\ell} \hsp \dd \mu + O_{q,r} \lp R \rp \\[-1mm]
&= \prod_{\ell = 1}^k \shsp \sum_{K_\ell \subset Q_\ell} \lp - \mu \lp \phi \rp \rp^{\# K_\ell} \int_{\Y} g_{\ell,K_\ell} \hsp \dd \mu + O_{q,r} \lp R \rp  \\
&= \prod_{\ell = 1}^k \shsp \int_{\Y} \prod_{i \in Q_\ell} \lp f \circ a^{n_{i,\ell}} \rp \hsp \dd \mu + O_{q,r} \lp R \rp \\
&= \prod_{\ell = 1}^k \shsp \int_{\Y} \prod_{i \in Q_\ell} \lp f \circ a^{m_i} \rp \hsp \dd \mu +O_{q,r} \lp R \rp \\
&=  \prod_{\ell = 1}^k \shsp \int_{\Y} \prod_{i \in Q_\ell} \shsp \psi_{m_i} \hsp \dd \mu + O_{q,r} \lp R \rp
\end{align*} 
where we used the $G$-invariance of $\mu$ in the penultimate step. Along with (\ref{isconditionalcum1}) this proves that for any $\textbf{m} = (m_1, \ldots, m_r) \in \lbrace 0, \ldots, N-1 \rbrace^r$ with $\textbf{m} \in \Omega_\mathcal{Q} \lp \alpha_j, \beta_{j+1}, N \rp$ and $\# \mathcal{Q} \geqslant 2$, we have
 \begin{align}\label{isconditionalcum3}
\int_{\Y} \prod_{i \in I} \shsp \psi_{m_i} \hsp \dd \mu &= \prod_{J \in \mathcal{Q}} \shsp \int_{\Y} \prod_{i \in I \cap J} \psi_{m_i} \hsp \dd \mu + O_{q,r} \lp \max \lbr 1, S_{q } \lp \phi \rp^r \rbr e^{- \lp \delta' \beta_{j+1} - q \xi \alpha_j r \rp } \rp.
\end{align} 
This proves an approximate version of (\ref{isconditionalcum1}), as claimed.
\subsection{Final Estimates of the Cumulants}
To complete our estimate of $\cum_r \lp \psi_{m_1}, \ldots, \psi_{m_r} \rp$, we recall the following result from \cite{BG2}. 
\begin{prop}[{\cite[Prop. 3.5]{BG2}}]
For any partition $\mathcal{Q}$ of $\lbr 1, \ldots, r \rbr$ with $\# \mathcal{Q} \geqslant 2$, and for any $\psi_1, \ldots, \psi_r \in L^\infty (\Y)$, we have $\cum_r \lp \psi_{1}, \ldots, \psi_{r} \mid \mathcal{Q} \rp = 0$.
\end{prop}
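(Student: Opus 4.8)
The plan is to recognise $\cum_r \lp \psi_1, \ldots, \psi_r \mid \mathcal{Q} \rp$ as an \emph{ordinary} (unconditional) joint cumulant of a suitable family of random variables on a product space, and then to invoke the classical fact that a joint cumulant vanishes as soon as its arguments split into two non-empty, mutually independent subfamilies.

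Write $\mathcal{Q} = \lbr J_1, \ldots, J_k \rbr$ with $k = \# \mathcal{Q} \geqslant 2$, and for $i \in \lbr 1, \ldots, r \rbr$ let $\ell(i)$ denote the index of the block of $\mathcal{Q}$ containing $i$. On the product probability space $\lp \Y^{\hsp k}, \mu^{\otimes k} \rp$ I would define bounded measurable functions
\[
\Psi_i \lp \Lambda_1, \ldots, \Lambda_k \rp := \psi_i \lp \Lambda_{\ell(i)} \rp \qquad (i = 1, \ldots, r),
\]
and first check, using the product structure, that for every $I \subseteq \lbr 1, \ldots, r \rbr$,
\[
\int_{\Y^{\hsp k}} \prod_{i \in I} \Psi_i \hsp \dd \mu^{\otimes k} = \prod_{\ell = 1}^{k} \int_{\Y} \prod_{i \in I \atop \ell(i) = \ell} \psi_i \hsp \dd \mu = \prod_{J \in \mathcal{Q}} \int_{\Y} \prod_{i \in I \cap J} \psi_i \hsp \dd \mu ,
\]
which is precisely the $I$-indexed quantity entering the definition of $\cum_r \lp \cdot \mid \mathcal{Q} \rp$ in Definition \ref{cumdef}. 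As the conditional cumulant and the ordinary joint cumulant $\cum_r \lp \Psi_1, \ldots, \Psi_r \rp$ are assembled from these same quantities by the identical alternating sum over partitions $\mathcal{P}$ of $\lbr 1, \ldots, r \rbr$, this gives
\[
\cum_r \lp \psi_1, \ldots, \psi_r \mid \mathcal{Q} \rp = \cum_r \lp \Psi_1, \ldots, \Psi_r \rp ,
\]
a genuine joint cumulant on the product space.

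To finish I would show that the right-hand side vanishes. The subfamily $\lbr \Psi_i : i \in J_1 \rbr$ depends only on the coordinate $\Lambda_1$, whereas $\lbr \Psi_i : i \notin J_1 \rbr$ depends only on $\lp \Lambda_2, \ldots, \Lambda_k \rp$, so under the product measure these two subfamilies are independent, and both are non-empty because $\# \mathcal{Q} \geqslant 2$. I would then invoke the classical fact that a joint cumulant vanishes in exactly this situation. A clean way to see this: put $K(t) = \log \int_{\Y^{\hsp k}} \exp \lp \sum_{j = 1}^{r} t_j \Psi_j \rp \hsp \dd \mu^{\otimes k}$, a real-analytic function of $t \in \R^r$ since the $\Psi_j$ are bounded; by the moment--cumulant relation $\cum_r \lp \Psi_1, \ldots, \Psi_r \rp = \lp \partial_{t_1} \cdots \partial_{t_r} K \rp (0)$, while independence gives $K(t) = K_1 \lp (t_i)_{i \in J_1} \rp + K_2 \lp (t_j)_{j \notin J_1} \rp$; the mixed derivative $\partial_{t_1} \cdots \partial_{t_r}$ differentiates at least once in a variable indexed by $J_1$ and at least once in a variable indexed by its complement, so it annihilates each of the two summands. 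Alternatively this vanishing may be quoted from standard treatments of cumulants such as \cite{speed}, or proved directly by Möbius inversion on the partition lattice. In all cases we conclude $\cum_r \lp \psi_1, \ldots, \psi_r \mid \mathcal{Q} \rp = 0$.

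The crux is thus the classical vanishing of a joint cumulant under a non-trivial independent splitting; the rest is routine bookkeeping, and I expect the full write-up to be short.
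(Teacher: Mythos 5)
Your argument is correct, and the reduction it hinges on is clean: you observe that plugging $\Psi_i(\Lambda_1,\ldots,\Lambda_k)=\psi_i(\Lambda_{\ell(i)})$ into the product space $(\Y^k,\mu^{\otimes k})$ makes the ordinary joint moment of $\{\Psi_i\}_{i\in I}$ factor exactly into $\prod_{J\in\mathcal{Q}}\int_\Y \prod_{i\in I\cap J}\psi_i\,\dd\mu$, which is the quantity appearing in Definition \ref{cumdef} for the conditional cumulant, so $\cum_r(\psi_1,\ldots,\psi_r\mid\mathcal{Q})=\cum_r(\Psi_1,\ldots,\Psi_r)$ on the nose. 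The finish via the log-MGF splitting $K=K_1+K_2$ under independence is sound because boundedness of the $\Psi_i$ makes $K$ analytic, so the mixed partial $\partial_{t_1}\cdots\partial_{t_r}$ picks up variables from both parts of the non-trivial split $J_1 \sqcup J_1^c$ and annihilates each summand.

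Be aware that the paper itself gives no proof of this proposition: it is quoted verbatim from \cite[Prop.~3.5]{BG2}, so there is no in-text argument to compare against. The proof in \cite{BG2} proceeds by a direct combinatorial manipulation of the alternating sum over partitions (essentially M\"obius inversion on the partition lattice, in the spirit of \cite{speed}), whereas your route repackages that same combinatorics into the single classical statement that a joint cumulant vanishes under a non-trivial independent splitting, realised concretely by lifting to a product space. Your approach buys conceptual transparency and reuse of a standard probabilistic fact; the combinatorial route is more self-contained if one does not wish to introduce an auxiliary probability space or appeal to the analyticity of the cumulant generating function. Both are complete and correct.

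Two small remarks on the write-up. First, you should state explicitly that the $\Psi_i$ are well-defined elements of $L^\infty(\Y^k,\mu^{\otimes k})$; this is immediate from $\psi_i\in L^\infty(\Y)$ but worth a sentence. Second, when invoking the vanishing of joint cumulants under independence you could simply cite \cite{speed}, which the paper already references, rather than re-deriving it via the generating function; either is acceptable, but the citation is shorter and aligns with the paper's conventions.
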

Now, by summing the estimate (\ref{isconditionalcum3}) over all partitions $\mathcal{P}$ of $\lb r \rb$ and by letting $I$ denote an element of $\mathcal{P}$, we finally obtain that
 \begin{align*}
\cum_r \lp \psi_{m_1}, \ldots, \psi_{m_r} \rp 
&= \cum_r \lp \psi_{m_1}, \ldots, \psi_{m_r} \mid \mathcal{Q} \rp + O_{q,r} \lp \max \lbr 1, S_{q } \lp \phi \rp^r \rbr e^{- \lp \delta' \beta_{j+1} - q \xi \alpha_j r \rp } \rp \\
&\ll_{q,r} \max \lbr 1, S_{q } \lp \phi \rp^r \rbr e^{- \lp \delta' \beta_{j+1} - q \xi \alpha_j r \rp }
\end{align*} 
thanks to the proposition above. It now follows from (\ref{cumalternative}) and (\ref{shouldhavegonetolouisnielsen}) that we have
 \begin{align}\label{laststepbeforecumtozero}
\nonumber \cum_r \lp F_N \rp 
&= \frac{1}{N^{r/2}} \sum_{\substack{m_1 = 0 \\ \ldots \\ m_r = 0}}^{N-1} \cum_r \lp \psi_{m_1}, \ldots, \psi_{m_r} \rp \\
&\ll_{q,r,\phi}  N^{1-r/2} \beta_{r }^{r-1} + N^{r/2} \sum_{j = 0}^r e^{- \lp \delta' \beta_{j+1} - q \xi \alpha_j r \rp },
\end{align} 
where we used the trivial bound $\# \Omega_\mathcal{Q} \lp \alpha_j, \beta_{j+1}, N \rp \leqslant N^r$ for all $j$. \par 
Now we choose an explicit sequence $\beta_1, \ldots, \beta_{r }$ such that the right-hand side of (\ref{laststepbeforecumtozero}) goes to $0$ as $N \longrightarrow \infty$. Following \cite[Sect. 3.2.4]{BG2}, we reduce this problem to choosing a single parameter $\gamma > 0$ by defining $\beta_1 = \gamma$ and
 \begin{align}\label{choiceofbeta}
\beta_{j+1} = \max \lbr \gamma + (3+r)\beta_j, \, \gamma + \lp \delta ' \rp^{-1}r (3+r) q \xi \beta_j \rbr, \quad \quad j = 1, \ldots, r-1.
\end{align} 
This choice of $\beta_{j+1}$ ensures that $\alpha_j = (3+r) \beta_j < \beta_{j+1}$, as required in 
Proposition \ref{decomposition}. Additionally, we have
 \begin{align*}
\delta' \beta_{j+1} \geqslant \delta' \gamma + r(3+r) q \xi \beta_j = \delta '\gamma + q \xi r \alpha_j,
\end{align*} 
which implies that $\delta' \beta_{j+1} - q \xi \alpha_j r \geqslant \delta' \gamma > 0$. Furthermore, by induction, (\ref{choiceofbeta}) and the equalities
 \begin{align*}
\beta_1 = \gamma, \quad \quad \beta_2 = \gamma \cdot \max \lbr 4 + r, 1 + \lp \delta ' \rp^{-1} r (3+r) q \xi \rbr 
\end{align*} 
imply that $\beta_{r } \ll_{r,q} \gamma$. This and (\ref{laststepbeforecumtozero}) show that
 \begin{align*}
\cum_r \lp F_N \rp \ll_{q, r, \phi } N^{1-r/2} \gamma^{r-1} + N^{r/2} e^{- \delta' \gamma }.
\end{align*} 
Since $r \geqslant 3$, we have $1 - r/2 \leqslant - 1/2$. Hence we obtain (\ref{smoothf-cumtozero}), provided that 
 \begin{align*}
\gamma^{r-1} = o \lp N^{1/2} \rp, \quad \quad N = o \lp e^{2 \delta' \gamma / r } \rp.
\end{align*} 
This suggests taking $\gamma = r \log N / \delta '$, which indeed has the required properties. This proves (\ref{smoothf-cumtozero}) for all $r \geqslant 3$ and hence Theorem \ref{cltforsmoothfcts}.
\section{A Central Limit Theorem for $\widehat{\chi_2}$}
The main result of this section is that the function $\widehat{\chi_2}$ can be approximated by a smooth, compactly supported function $\phi$ on $\Y$, and that this approximation allows us to transfer the central limit theorem for the $F_N$ defined in (\ref{smoothfcts}) to the averages 
 \begin{align}\label{thisisthestuff}
G_N := \frac{1}{\sqrt{N}} \sum_{m = 0}^{N-1} \psi_m,
\end{align} 
where $\psi_m := \widehat{\chi_2} \circ b^m - \vol \lp \Omega_2 \rp$. We follow the arguments in \cite{BG2}. \par 
One of the reasons why Theorem \ref{BEGmain} cannot be applied directly to the function $\widehat{\chi_2}$ is that this is not a smooth function on $\Y$ because $\chi_2$ is not a smooth function on $\R^{2d}$. However, since the Lie derivatives we introduced in \textsc{Section} 2 commute with the operation of taking the Siegel transform (cf. (\ref{siegelcommuteswithdiff}) below), we are led to considering the Siegel transform of a smooth version $f_\varepsilon$ of $\chi_2$ instead and keeping track of the resulting error in such an approximation. While this will ameliorate the problem in question, there is still the obstacle that the resulting Siegel transform will not have compact support. To deal with this, we note that by Mahler's compactness theorem \cite[Chap. V, Thm. IV]{cassels} the function $\alpha$ that we introduced in \textsc{Section} 3 is proper. Hence, a first idea towards constructing $\phi$ would be to define it in terms of $\widehat{f_\varepsilon}$ and the indicator function of the set of lattices where $\alpha$ is \textit{small}. The resulting function will not be smooth, however, so we will work with "smooth indicator functions" $\eta_L$ of such sets $\alpha^{-1}(\lb 0, L \rb)$ instead. We now proceed to the details.
\subsection{Approximating $\widehat{\chi_2}$ with Compactly Supported $C^\infty$-Functions}
We first construct the function $f_\varepsilon$ and give some of its properties. Let $\theta : \R^{2d} \rightarrow \R$ be a smooth non-negative function that integrates to $1$ and has support contained in the ball of radius $1/2$ centered at the origin in $\R^{2d}$. Also, let 
 \begin{align}\label{fepsdef}
\theta_\varepsilon (\textbf{x}) := \varepsilon^{-2d} \theta \lp \varepsilon^{-1}\textbf{x} \rp, \quad \quad f_\varepsilon := \theta_\varepsilon * \mathbbm{1}_{\mathcal{M}(\varepsilon)},
\end{align} 
where $*$ denotes convolution and $\mathcal{M}(\varepsilon)$ is an $\varepsilon/2$-thickening of $\Omega_2$. Then $f_\varepsilon$ belongs to $C_c^\infty \lp \R^{2d} \rp$ and has support contained in an $\varepsilon$-neighbourhood of $\Omega_2$. In addition to this, we record the following properties of the family $\lbr f_\varepsilon \rbr$ for later use (cf. \cite[Sect. 6]{BG2}): 
 \begin{align}
\left\Vert f_\varepsilon - \chi_2 \right\Vert_{1} &\ll \varepsilon, \label{feps01} \\
\left\Vert f_\varepsilon - \chi_2 \right\Vert_2 &\ll \sqrt{\varepsilon}, \label{feps02}\\ 
\left\Vert f_\varepsilon \right\Vert_{C^q} &\ll \varepsilon^{-q}. \label{feps03}
\end{align} 
As mentioned above, the function $f_\varepsilon$ serves as a smooth indicator function for $\Omega_2$ in the following sense: For any $\varepsilon > 0$, we have $\chi_2 \leqslant f_\varepsilon \leqslant 1$. Indeed, it is straightforward that $f_\varepsilon \leqslant 1$, and if $\textbf{x}_0 \in \Omega_2$, then
 \begin{align*}
f_\varepsilon \lp \textbf{x}_0 \rp = \int_{\R^{2d}} \theta_\varepsilon \lp \textbf{y} \rp \mathbbm{1}_{\mathcal{M} (\varepsilon)} \lp \textbf{x}_0 - \textbf{y} \rp \hsp \dd \textbf{y} = \int_{-\mathcal{M}( \varepsilon) + \textbf{x}_0} \theta_\varepsilon ( \textbf{y} ) \hsp \dd \textbf{y} = 1,
\end{align*} 
since $\theta_\varepsilon$ has support contained in the ball $B_{\varepsilon / 2} (0) \subset - \mathcal{M}(\varepsilon) + \textbf{x}_0$.\par 
We now turn to the construction of the family of functions $\lbr \eta_L \rbr$ on $\Y$. As the following lemma shows, $\eta_L$ will serve as a smooth indicator function of a compact subset of $\Y$ depending on an arbitrary threshold parameter $c$.
\begin{lemma}[{\cite[Lemma 4.11]{BG2}}]\label{etaisgod}
For arbitrary $c > 1$, there is a family $\lbr \eta_L \rbr \subseteq C_c^\infty (\Y)$ such that, for any $L > 0$, the function $\eta_L$ takes values in $\lb 0, 1 \rb$ and satisfies $\Vert \eta_L \Vert_{C_q} \ll 1$. Additionally, $\eta_L$ has the following properties: \par If $\alpha (\Lambda) > cL$, then
 \begin{align}
\eta_L (\Lambda) = 0. \label{alphaone}
\end{align} 
\par If $\alpha (\Lambda) \leqslant c^{-1}L$, then
 \begin{align}
\eta_L (\Lambda ) = 1. \label{alphatwo}
\end{align} 
\end{lemma}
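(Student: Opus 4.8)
The plan is to realise $\eta_L$ as $\chi \lp \widetilde{\alpha}/L \rp$ for a fixed one--variable smooth cutoff $\chi$ and a mollified height function $\widetilde{\alpha} \in C^\infty(\Y)$ satisfying $c^{-1/2}\alpha \leqslant \widetilde{\alpha} \leqslant c^{1/2}\alpha$ pointwise together with $\big| \mathcal{D}_Z \widetilde{\alpha} \big| \ll_{q} \widetilde{\alpha}$ for monomials $Z$ of degree at most $q$. The starting observation is that $\alpha$ changes only by a factor close to $1$ under small translations. Concretely, for $1 \leqslant r \leqslant 2d$ and a rank-$r$ discrete group $\Delta = \mathrm{span}_\Z \lbr \textbf{v}_1, \ldots, \textbf{v}_r \rbr \leqslant \R^{2d}$, applying $\wedge^r g$ and $\wedge^r g^{-1}$ to $\textbf{v}_1 \wedge \cdots \wedge \textbf{v}_r$ shows $\big\Vert \wedge^r g^{-1} \big\Vert^{-1} \leqslant d(g\Delta)/d(\Delta) \leqslant \big\Vert \wedge^r g \big\Vert$, where $\Vert \cdot \Vert$ is the operator norm on $\wedge^r \R^{2d}$. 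Hence $\beta(g) := \max_{1 \leqslant r \leqslant 2d} \big\Vert \wedge^r g \big\Vert$ is a continuous function on $G$ with $\beta(e) = 1$, and by Lemma~\ref{alphadifferentcharacterization} (writing a discrete subgroup of $g \Lambda$ as $g \Delta'$ with $\Delta' \leqslant \Lambda$) one gets $\beta(g)^{-1}\alpha(\Lambda) \leqslant \alpha(g\Lambda) \leqslant \beta \lp g^{-1} \rp \alpha(\Lambda)$ for all $g \in G$ and all $\Lambda$. Fixing a symmetric open neighbourhood $U$ of $e$ in $G$ with $\beta < c^{1/2}$ on $U$ (possible since $\beta(e) = 1 < c^{1/2}$), this yields $c^{-1/2}\alpha(\Lambda) \leqslant \alpha(g\Lambda) \leqslant c^{1/2}\alpha(\Lambda)$ for all $g \in U$.

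Next I would mollify $\alpha$ on the group. Regard $\alpha$ as the right-$\Gamma$-invariant, locally bounded (it is proper, by Mahler) function on $G$ with $\alpha(g\Gamma) = \alpha(g)$, fix $\kappa \in C_c^\infty(G)$ with $\kappa \geqslant 0$, $\int_G \kappa = 1$ and $\mathrm{supp}\,\kappa$ a compact subset of $U$, and set $\widetilde{\alpha}(g) := \int_G \kappa(h)\, \alpha \lp h^{-1}g \rp \hsp \dd h$. Right-$\Gamma$-invariance of $\alpha$ makes $\widetilde{\alpha}$ descend to $\Y$; after the change of variables $h \mapsto g h^{-1}$ (legitimate since $G$ is unimodular) the parameter $g$ appears only in the factor $\kappa$, which is smooth and compactly supported in $h$, so $\widetilde{\alpha} \in C^\infty(\Y)$ and differentiation under the integral gives, for every monomial $Z \in \mathcal{U}(\mathfrak{g})$, an expression $\mathcal{D}_Z \widetilde{\alpha}(g) = \int_G \lp D_Z \kappa \rp(h)\, \alpha \lp h^{-1}g \rp \hsp \dd h$ with $D_Z \kappa \in C_c^\infty(G)$ supported in $\mathrm{supp}\,\kappa \subseteq U$. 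Since $\kappa \geqslant 0$ integrates to $1$ and $\mathrm{supp}\,\kappa \subseteq U$, the stability estimate of the previous paragraph gives simultaneously $c^{-1/2}\alpha \leqslant \widetilde{\alpha} \leqslant c^{1/2}\alpha$ and $\big| \mathcal{D}_Z \widetilde{\alpha} \big| \leqslant \big\Vert D_Z \kappa \big\Vert_{L^1(G)}\, c^{1/2}\, \alpha \ll_{\kappa,q} \widetilde{\alpha}$ for all monomials $Z$ with $\deg Z \leqslant q$.

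Now fix $\chi \in C^\infty(\R)$ with $0 \leqslant \chi \leqslant 1$, $\chi \equiv 1$ on $\lp -\infty, c^{-1/2} \rb$ and $\chi \equiv 0$ on $\lb c^{1/2}, \infty \rp$ --- possible because $c > 1$ forces $c^{-1/2} < c^{1/2}$ --- and define $\eta_L := \chi \lp \widetilde{\alpha}/L \rp$, so $0 \leqslant \eta_L \leqslant 1$ and $\eta_L \in C^\infty(\Y)$. If $\alpha(\Lambda) \leqslant c^{-1}L$ then $\widetilde{\alpha}(\Lambda) \leqslant c^{1/2}\alpha(\Lambda) \leqslant c^{-1/2}L$ and $\eta_L(\Lambda) = 1$; if $\alpha(\Lambda) > cL$ then $\widetilde{\alpha}(\Lambda) \geqslant c^{-1/2}\alpha(\Lambda) > c^{1/2}L$ and $\eta_L(\Lambda) = 0$. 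Moreover $\eta_L(\Lambda) \neq 0$ forces $\widetilde{\alpha}(\Lambda) < c^{1/2}L$, hence $\alpha(\Lambda) \leqslant c^{1/2}\widetilde{\alpha}(\Lambda) < cL$, so $\overline{\lbr \eta_L \neq 0 \rbr} \subseteq \alpha^{-1}\lp \lb 0, cL \rb \rp$, which is compact since $\alpha$ is proper; thus $\eta_L \in C_c^\infty(\Y)$. For the uniform $C^q$-bound: for $\deg Z \leqslant q$, the multivariate chain and product rules express $\mathcal{D}_Z \eta_L$ as a sum of $O_q(1)$ terms of the form $\chi^{(k)}\lp \widetilde{\alpha}/L \rp L^{-k} \prod_{i=1}^{k} \mathcal{D}_{Z_i}\widetilde{\alpha}$ with $k \leqslant q$ and $\sum_i \deg Z_i \leqslant q$; such a term vanishes unless $\widetilde{\alpha}(\Lambda)/L \in \lb c^{-1/2}, c^{1/2} \rb$, a region on which $\big| \mathcal{D}_{Z_i}\widetilde{\alpha} \big| \ll_{\kappa,q} \widetilde{\alpha} \leqslant c^{1/2}L$, so its modulus is $\ll_{q,c,\kappa} L^{-k}\lp c^{1/2}L \rp^{k} = O_{q,c,\kappa}(1)$. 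Since $c$ and $\kappa$ are fixed once and for all, this gives $\big\Vert \eta_L \big\Vert_{C^q} \ll_{q} 1$ uniformly in $L$.

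The heart of the matter is the stability-and-mollification step. One should \emph{not} use the crude polynomial bound $\beta$ appearing in the proof of Lemma~\ref{alphabound} (for which $\beta(e) > 1$, giving no $1+o(1)$ control), but the operator-norm version above with $\beta(e) = 1$; and one must perform the smoothing on $G$, convolving on the correct side so that right-$\Gamma$-invariance is preserved and the differentiations fall on $\kappa$. Granting these two points, smoothness of $\widetilde{\alpha}$, the two-sided comparison with $\alpha$, the derivative bounds, the three displayed properties of $\eta_L$, and compactness of its support (immediate from the properness of $\alpha$ recorded in \textsc{Section~3}) are all routine.
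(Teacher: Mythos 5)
Your proof is correct, but it takes a genuinely different route from the one the paper points to. The paper cites \cite[Lemma 4.11]{BG2}, whose construction (recorded in the Remark after the lemma) smooths the \emph{indicator function} directly: $\eta_L = \Xi * \mathbbm{1}_{\lbr \alpha \leqslant L \rbr}$, where $\Xi \in C_c^\infty(G)$ is a non-negative bump with $\int_G \Xi = 1$ supported near the identity. That construction hands over the uniform $C^q$ bound essentially for free, since differentiation under the integral moves all Lie derivatives onto $\Xi$, so $\big|\mathcal{D}_Z \eta_L\big| \leqslant \lev \mathcal{D}_Z \Xi \rev_{L^1(G)}$ with no further work; the two cutoff properties \eqref{alphaone}--\eqref{alphatwo} then follow from the same two-sided stability of $\alpha$ under small group translations that you use. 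Your construction instead smooths $\alpha$ \emph{itself} into $\widetilde{\alpha}$ and then post-composes with a fixed one-variable cutoff $\chi$, so the $C^q$ bound requires a Fa\`a di Bruno / Leibniz argument and the extra comparison $|\mathcal{D}_Z\widetilde{\alpha}| \ll \widetilde{\alpha}$. That is a little more work, but it yields a potentially reusable by-product: a smooth height function $\widetilde{\alpha}$ with $c^{-1/2}\alpha \leqslant \widetilde{\alpha} \leqslant c^{1/2}\alpha$ and controlled logarithmic derivatives. Your preliminary stability estimate $\beta(g)^{-1}\alpha(\Lambda) \leqslant \alpha(g\Lambda) \leqslant \beta(g^{-1})\alpha(\Lambda)$ via exterior powers is clean and correct, and your caution against recycling the Bekka--Mayer polynomial bound from Lemma \ref{alphabound} is well taken --- what is needed here is a stability constant tending to $1$ at the identity, which the operator-norm version supplies. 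One small remark: since $\alpha$ is only lower semicontinuous, it is worth saying explicitly (as you essentially do) that $\lbr \alpha \leqslant cL \rbr$ is closed and that its relative compactness comes from Mahler via the bound $\alpha \geqslant 1/s(\Lambda)$; with that noted, the compact support of $\eta_L$ is immediate.
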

\noindent \textsc{Remarks.} \par 1) It should be noted that \cite[Lemma 4.11]{BG2} is a statement about the family $\lbr \eta_L \rbr$ on the larger space of all unimodular lattices. However, the proof extends verbatim to the current situation. \par 
2) The proof of \cite[Lemma 4.11]{BG2} shows that $\eta_L$ is given explicitly by
 \begin{align}\label{etadef}
\eta_L (\Lambda) := \lp \Xi * \mathbbm{1}_{\lbr \Lambda \in \Y : \alpha \lp \Lambda \rp \leqslant L \rbr}\rp (\Lambda) = \int_G \Xi (g) \mathbbm{1}_{\lbr \alpha \leqslant L \rbr} \lp g^{-1} \Lambda \rp \hsp \dd \mu (g),
\end{align} 
where $\Xi \in C_c^\infty \lp \Y \rp$ is a non-negative function depending on $c$ with $\mu ( \Xi ) = 1$. \\ \par  
We now let $\phi = \hat{f_\varepsilon} \eta_L$ so that $\phi$ is a compactly supported and smooth function that approximates $\chi_2$. Furthermore, we define
 \begin{align*}
G_N^{(\varepsilon, L)} := \frac{1}{\sqrt{N}} \lp \sum_{m = 0}^{N-1} \lp \widehat{f_\varepsilon} \eta_L \rp \circ b^m - N \int_{\Y} \widehat{f_\varepsilon} \eta_L \hsp \dd \mu \rp.
\end{align*} 
The following lemma shows how well $G_N^{(\varepsilon, L)}$ approximates $G_N$.
\begin{lemma}\label{smoothapprox}
For any $p = 1, \ldots, d$, as $N \longrightarrow \infty$, 
 \begin{align*}
\left\Vert G_N^{(\varepsilon, L)} - G_N \right\Vert_{1} \ll_{p,c} \sqrt{N}\lp L^{-p/2} + \epsilon \rp.
\end{align*} 
\end{lemma}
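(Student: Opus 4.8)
The plan is to reduce the claimed $L^1$-bound on the averages to a single $L^1$-bound on the function $h:=\widehat{f_\varepsilon}\,\eta_L-\widehat{\chi_2}$ on $\Y$, and then to control $h$ by splitting it into a \emph{smoothing error} and a \emph{truncation error}. For the reduction, recall that by the symplectic Siegel mean value theorem $\vol(\Omega_2)=\int_{\Y}\widehat{\chi_2}\,\dd\mu$, so that
\begin{align*}
G_N^{(\varepsilon,L)}-G_N=\frac{1}{\sqrt N}\sum_{m=0}^{N-1}h\circ b^m-\sqrt N\,\mu(h).
\end{align*}
By the triangle inequality, the $G$-invariance of $\mu$ (which gives $\Vert h\circ b^m\Vert_1=\Vert h\Vert_1$), and the trivial bound $|\mu(h)|\leqslant\Vert h\Vert_1$, this yields $\Vert G_N^{(\varepsilon,L)}-G_N\Vert_1\leqslant 2\sqrt N\,\Vert h\Vert_1$, so it suffices to prove $\Vert h\Vert_1\ll_{p,c}L^{-p/2}+\varepsilon$.

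To estimate $\Vert h\Vert_1$, I would use the linearity of the Siegel transform to write $h=\widehat{f_\varepsilon-\chi_2}\cdot\eta_L+\widehat{\chi_2}\cdot(\eta_L-1)$ and bound the two terms separately. Since $\chi_2\leqslant f_\varepsilon$, the function $f_\varepsilon-\chi_2$ is non-negative with compact support, hence so is $\widehat{f_\varepsilon-\chi_2}$; as $0\leqslant\eta_L\leqslant 1$, the symplectic Siegel mean value theorem then gives
\begin{align*}
\big\Vert\widehat{f_\varepsilon-\chi_2}\cdot\eta_L\big\Vert_1\leqslant\int_{\Y}\widehat{f_\varepsilon-\chi_2}\,\dd\mu=\mathrm{Leb}(f_\varepsilon-\chi_2)=\Vert f_\varepsilon-\chi_2\Vert_1\ll\varepsilon
\end{align*}
by (\ref{feps01}). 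This disposes of the smoothing error.

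For the truncation error, I would use that by (\ref{alphatwo}) the function $\eta_L-1$ vanishes on $\{\alpha\leqslant c^{-1}L\}$ and is bounded by $1$ in absolute value, so it is supported in $\{\alpha>c^{-1}L\}$. Combining Schmidt's estimate $|\widehat{\chi_2}(\Lambda)|\ll\alpha(\Lambda)$ from Proposition \ref{schmidt-alpha} (valid because $\chi_2$ is bounded with compact support) with the Cauchy--Schwarz inequality then gives
\begin{align*}
\big\Vert\widehat{\chi_2}\cdot(\eta_L-1)\big\Vert_1\leqslant\int_{\{\alpha>c^{-1}L\}}|\widehat{\chi_2}|\,\dd\mu\ll\int_{\{\alpha>c^{-1}L\}}\alpha\,\dd\mu\leqslant\Vert\alpha\Vert_{L^2(\Y)}\,\mu\big(\{\alpha>c^{-1}L\}\big)^{1/2}.
\end{align*}
Here $\Vert\alpha\Vert_{L^2(\Y)}<\infty$ since $d\geqslant 2$, and Theorem \ref{aaalphaint} gives $\mu(\{\alpha>c^{-1}L\})\ll_{p,c}L^{-p}$ for each $p=1,\ldots,d$, so this term is $\ll_{p,c}L^{-p/2}$. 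Summing the two contributions gives $\Vert h\Vert_1\ll_{p,c}L^{-p/2}+\varepsilon$, as required.

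The reduction and the smoothing estimate are routine, so I expect the only delicate point to be the truncation estimate, and specifically the choice of how to bound $\int_{\{\alpha>c^{-1}L\}}\alpha\,\dd\mu$: estimating this truncated first moment directly (e.g.\ by the layer-cake formula together with $\mu(\{\alpha>t\})\ll t^{-p}$) only yields decay $L^{-(p-1)}$, which is worthless at $p=1$. The correct move is to pair $|\widehat{\chi_2}|\ll\alpha\in L^2(\Y)$ against the \emph{measure} of the super-level set via Cauchy--Schwarz, so as to exploit the full decay rate $\mu(\{\alpha>c^{-1}L\})\ll L^{-p}$ coming from Theorem \ref{aaalphaint}; this is also what forces the (harmless, given $d\geqslant 4$) requirement $d\geqslant 2$.
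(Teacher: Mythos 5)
Your proof is correct and follows essentially the same route as the paper's: the reduction to $\Vert G_N^{(\varepsilon,L)}-G_N\Vert_1\leqslant 2\sqrt N\,\Vert\widehat{f_\varepsilon}\eta_L-\widehat{\chi_2}\Vert_1$, a split into a smoothing piece handled by Siegel's mean value theorem and (\ref{feps01}), and a truncation piece handled via Proposition \ref{schmidt-alpha}, Cauchy--Schwarz, and Theorem \ref{aaalphaint}. The only cosmetic difference is that you decompose $h$ as $\widehat{f_\varepsilon-\chi_2}\,\eta_L+\widehat{\chi_2}(\eta_L-1)$ whereas the paper writes $h=-\widehat{f_\varepsilon}(1-\eta_L)+(\widehat{f_\varepsilon}-\widehat{\chi_2})$, and you apply Schmidt's bound before Cauchy--Schwarz while the paper does the reverse; these give identical estimates.
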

\begin{proof}
By the triangle inequality and the $G$-invariance of $\mu$, we have
 \begin{align*}
\left\Vert G_N^{(\varepsilon, L)} - G_N \right\Vert_{1} 
&\leqslant 2 \sqrt{N} \lv \widehat{f_\varepsilon} \eta_L - \widehat{\chi_2}  \rv_1 
\\ &\leqslant 2 \sqrt{N} \lp \lv \widehat{f_\varepsilon} \lp 1 - \eta_L \rp \rv_1 + \lv \widehat{f_\varepsilon} - \widehat{\chi_2} \rv_1 \rp.
\end{align*} 
We now estimate the $L^1$-norms on the right-hand side. By (\ref{alphatwo}), Proposition \ref{schmidt-alpha}, Theorem \ref{aaalphaint}, and the Cauchy-Schwartz inequality, we have 
 \begin{align*}
\lv \widehat{f_\varepsilon} \lp 1 - \eta_L \rp \rv_1 &\leqslant \int_{\lbr \alpha \geqslant c^{-1} L \rbr} \widehat{f_\varepsilon} \lp 1 - \eta_L \rp \hsp \dd \mu \leqslant \int_{\lbr \alpha \geqslant c^{-1} L \rbr} \widehat{f_\varepsilon} \hsp \dd \mu 
\\ &\leqslant \mu \lp \lbr \alpha \geqslant c^{-1} L \rbr \rp^{1/2} \lv \widehat{f_\varepsilon} \rv_2 \ll_{p} c^{p/2} L^{-p/2} \lv f_\varepsilon \rv_\infty  \lv \alpha \rv_2 
\\[+0.5em] &\ll_{c,p} L^{-p/2},
\end{align*} 
since $f_\varepsilon \leqslant 1$ and the family $\lbr \mathrm{supp} \, f_\varepsilon \rbr_\varepsilon$ is uniformly bounded.\par 
To estimate the second $L^1$-norm, we note that the inequality $f_\varepsilon \geqslant \chi_2$ is preserved by the Siegel transform, and hence
 \begin{align*}
\lv \widehat{f_\varepsilon} - \widehat{\chi_2} \rv_1 = \int_{\Y} \widehat{f_\varepsilon - \chi_2} \hsp \dd \mu = \lv f_\varepsilon - \chi_2 \rv_1 \ll \varepsilon
\end{align*}  
by Siegel's mean value theorem and (\ref{feps01}). This proves the lemma. 
\end{proof}
In light of Lemma \ref{smoothapprox}, we would like to take the parameters $L$ and $\varepsilon$ as suitable functions of $N$ so that as $N \longrightarrow \infty$, 
 \begin{align}\label{conditionsonepsandl}
\varepsilon = o \lp N^{-1/2} \rp, \quad \quad  N = o \lp L^p \rp
\end{align} 
for some $p = 1, \ldots, d$. Doing so, we observe that in order to prove Theorem \ref{maintheorem} in the case $T=2^N$, it is enough to prove that the sequence $G_N^{(\varepsilon, L)}$ satisfies a central limit theorem. Namely, our assumptions then imply that for some $\sigma \geqslant 0$,
 \begin{align}
G_N^{(\varepsilon, L)} &\Longrightarrow N\lp 0, \sigma^2 \rp, \label{hbo-1}\\
\left\Vert G_N^{(\varepsilon, L)} - G_N \right\Vert_{1} &\longrightarrow 0 \label{hbo-2},
\end{align} 
as $N \longrightarrow \infty$, which implies a central limit theorem for $G_N$ by standard methods. Hence, our goal now is to show that with $\varepsilon (N)$ and $L(N)$ chosen so that (\ref{conditionsonepsandl}) holds, we do indeed have convergence as in (\ref{hbo-1}). (Note that we do not automatically have a central limit theorem for $G_N^{(\epsilon, L)}$ by virtue of Theorem \ref{cltforsmoothfcts}. Although, for a fixed $N$, $\smash{G_N^{(\epsilon, L)}}$ has the form (\ref{smoothfcts}), the functions $\phi$ in the sum \textit{depend on} $N$ because $\varepsilon$ and $L$ depend on $N$, and consequently Theorem \ref{cltforsmoothfcts} does not apply, as this is a statement about the average behaviour of a \textit{fixed} function as $N$ tends to infinity.)
\subsection{Proof of Theorem \ref{maintheorem}}
We can prove a central limit theorem for $G_N^{(\epsilon, L)}$ by mimicking the proof of Theorem \ref{cltforsmoothfcts} and taking into account how the error terms behave when $\varepsilon \longrightarrow 0$ and $L \longrightarrow \infty$ in accordance with (\ref{conditionsonepsandl}). Appealing again to Theorem \ref{cltcriterion}, our main goal is therefore to prove that there is some choice of the parameters $\varepsilon$ and $L$ such that
 \begin{align}
\lim_{N \rightarrow \infty} \int_{\Y} \lp G_N^{(\epsilon, L)}\rp^2 \hsp \dd \mu \in \lb 0, \infty \rp, \label{GN-finitevariance}
\end{align} 
(in particular, the limit exists), and 
 \begin{align}
\lim_{N \rightarrow \infty} \cum_r \lp G_N^{(\epsilon, L)} \rp &= 0 \label{GN-cumtozero}
\end{align} 
for all $r \geqslant 3$. 
\subsubsection{Investigations of the Limiting Variance}
We now demonstrate that (\ref{GN-finitevariance}) holds. If $G_N$ is given by (\ref{thisisthestuff}), we note that due to the triangle inequalities,
 \begin{align*}
\big\| G_N \big\|_2 - \lv G_N - G_N^{(\epsilon, L)} \rv_2 \leqslant \lv G_N^{(\epsilon, L)} \rv_2 \leqslant \big\| G_N \big\|_2 + \lv G_N - G_N^{(\epsilon, L)} \rv_2,
\end{align*} 
and hence (\ref{GN-finitevariance}) will follow if we can prove that
 \begin{align}\label{aux01}
\lim_{N \rightarrow \infty} \big\| G_N \big\|_2 \in \lb 0, \infty \rp
\end{align} 
exists and prove that
 \begin{align}
\lv G_N - G_N^{(\epsilon, L)} \rv_2 \longrightarrow 0. \label{aux02}
\end{align} 
We therefore proceed by proving (\ref{aux01}) and (\ref{aux02}).\par 
To obtain (\ref{aux01}), we will need a theorem due to Kelmer and Yu that expresses integrals over $Y_{2d}$ of primitive Siegel transforms in terms of Euclidean integrals, i.e., a sympletic equivalent of a special case of Rogers' integration formula \cite[Thm. 4]{rogers55}. Since our Siegel transforms are not primitive, we state this version of their result as Theorem \ref{kelmerthm} below. \par 
We first introduce some notation. As in \cite[Sect. 1.3]{kelmer} we can write any $\textbf{x} \in \R^{2d} \setminus \lbr \0 \rbr$ in polar coordinates as $\textbf{x} = K a_y \textbf{e}_{2d}$ where
 \begin{align*}
K = \mathrm{SO}(2d) \cap \SP(2d, \R), \quad \quad a_y = \diag (y, 1, \ldots, 1, 1/y ) \quad (y > 0),
\end{align*} 
and $\textbf{e}_{2d}$ denotes the $(2d)$'th standard basis vector. Next, if $f$ is a bounded function on $\R^{2d}$ with compact support, we define the function
 \begin{align}\label{perioddef}
P_f (\textbf{x}) = P_f( Ka_y) = \int_{Y'} \int_{\lb 0, 1 \rp^{2d-1}} \sum_{\textbf{v} \in \Z^{2d} \setminus \lbr {\bf 0} \rbr} f (K a_y m(A) u_{\textbf{t},s} \textbf{v}) \hsp \dd \textbf{t} \hsp \dd s \hsp \dd \mu_{d-1} (A),
\end{align} 
where $Y' = \lbr m(A) : A \in \SP(2d-2, \R)/\SP(2d-2, \Z) \rbr \simeq Y_{2d-2}$, and
 \begin{align*}
m(A) = \lp \begin{matrix}
1 &0 &0 \\
0 &A &0 \\
0 &0 &1 
\end{matrix} \rp, \quad 
u_{\textbf{t},s} = \lp \begin{matrix}
1 &0 &0 \\
\textbf{t} &I &0 \\
s &\textbf{t}^* &1 
\end{matrix} \rp
\end{align*} 
with $\textbf{t}^* := \lp t_2, \ldots, t_{2d-1} \rp^* := \lp t_{2d-1}, \ldots, t_{d+1}, -t_d, \ldots, -t_2 \rp$.
\begin{theorem}[Corollary to {\cite[Theorem 1.1]{kelmer}}]\label{kelmerthm}
Suppose $f, \, g : \R^{2d} \rightarrow \R$ are even and bounded and have compact support. Then one has the formula
 \begin{align*}
\int_{Y_{2d}} \hat{f} \hat{g} \, \, \dd \mu = \frac{1}{\zeta(2d)} \sum_{j \geqslant 1} \int_{\R^{2d}} P_{f} (\textbf{x}) g(j\textbf{x}) \, \, \dd \textbf{x}.
\end{align*} 
\end{theorem}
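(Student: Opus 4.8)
The plan is to deduce the formula from \cite[Thm. 1.1]{kelmer} --- which evaluates the corresponding integral over $Y_{2d}$ when the Siegel transforms occurring in it are the \emph{primitive} ones $\widehat{h}^{\,\mathrm{prim}}(\Lambda) := \sum_{\textbf{w} \in \Lambda \text{ prim.}} h(\textbf{w})$ --- by reducing our non-primitive transforms to primitive ones. The elementary fact that makes this possible is that every non-zero vector of a lattice $\Lambda$ factors uniquely as $\textbf{v} = j\textbf{w}$ with $j \in \Z_{\geqslant 1}$ and $\textbf{w} \in \Lambda$ primitive. Writing $D_j h(\textbf{x}) := h(j\textbf{x})$, this yields the pointwise identity
\begin{align*}
\widehat{h}(\Lambda) = \sum_{\substack{\textbf{v} \in \Lambda \\ \textbf{v} \neq 0}} h(\textbf{v}) = \sum_{j \geqslant 1} \; \sum_{\textbf{w} \in \Lambda \text{ prim.}} h(j\textbf{w}) = \sum_{j \geqslant 1} \widehat{D_j h}^{\,\mathrm{prim}}(\Lambda),
\end{align*}
in which, for each fixed $\Lambda$, only finitely many $j$ contribute (namely $j \leqslant \mathrm{diam}(\mathrm{supp}\, h)/s(\Lambda)$), and in which each $D_j h$ is again even, bounded and compactly supported, with support in the ball of radius $R/j$ about the origin whenever $\mathrm{supp}\, h$ lies in that of radius $R$.

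First I would substitute this decomposition into $\widehat{f}\,\widehat{g}$, which thereby becomes a series --- single or double, depending on which of the two factors is primitive in the cited formula --- of products of primitive Siegel transforms to which \cite[Thm. 1.1]{kelmer} applies term by term. The one genuinely non-routine step is then the interchange of $\int_{\Y}$ with this series, which I would justify by dominated convergence. Since $\big|\sum_{j \leqslant J} \widehat{D_j h}^{\,\mathrm{prim}}(\Lambda)\big| \leqslant \widehat{|h|}(\Lambda)$ for every $J$, each partial sum of $\widehat{f}\,\widehat{g}$ is bounded pointwise by $|\widehat{f}(\Lambda)|\,\widehat{|g|}(\Lambda)$, which by Proposition \ref{schmidt-alpha} is $\ll_{\mathrm{supp}\, f, \, \mathrm{supp}\, g} \Vert f\Vert_\infty \Vert g\Vert_\infty\, \alpha(\Lambda)^2$; and $\alpha^2 \in L^1(\Y)$ by Theorem \ref{aaalphaint}, because $\alpha \in L^2(\Y)$ (here $d \geqslant 2$ suffices). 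Termwise integration is therefore legitimate, and the same bound shows that the resulting series is absolutely convergent.

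Finally I would recombine the Euclidean integrals supplied by \cite[Thm. 1.1]{kelmer} into the asserted shape. The key identity is that $P_f$ as defined in (\ref{perioddef}), whose inner sum runs over all non-zero $\textbf{v} \in \Z^{2d}$, satisfies $P_f = \sum_{j \geqslant 1} Q_{D_j f}$, where $Q_h$ denotes the variant of (\ref{perioddef}) with the inner sum restricted to primitive $\textbf{v}$; this is immediate from the linearity of the maps $m(A)$ and $u_{\textbf{t},s}$ together with the factorization $\textbf{v} = j\textbf{w}$ used above. Hence the summation arising from decomposing the $f$-factor collapses --- inside the Euclidean integral, by Fubini's theorem --- into $P_f$, while the summation arising from the $g$-factor persists and produces exactly $\zeta(2d)^{-1}\sum_{j \geqslant 1}\int_{\R^{2d}} P_f(\textbf{x})\, g(j\textbf{x})\, \dd\textbf{x}$. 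The step I expect to be the main obstacle is the sum--integral interchange of the previous paragraph, and it is precisely there that the $L^p$-integrability of the height function $\alpha$ furnished by Theorem \ref{aaalphaint} is indispensable; everything else is a careful but essentially mechanical unwinding of \cite[Thm. 1.1]{kelmer} and of the definition (\ref{perioddef}).
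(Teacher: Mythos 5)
Your argument is correct and follows essentially the same route as the paper: reduce the Siegel transforms to primitive ones via the dilation decomposition $\widehat{h} = \sum_{j \geqslant 1} \widehat{D_j h}^{\,\mathrm{prim}}$, apply Kelmer--Yu's primitive formula termwise, and recombine using $P_f = \sum_j Q_{D_j f}$. The one place you go beyond the paper is in explicitly supplying the dominated-convergence justification for interchanging $\int_{\Y}$ with the resulting double series, via Proposition \ref{schmidt-alpha} and $\alpha \in L^2(\Y)$ (Theorem \ref{aaalphaint}, $d \geqslant 2$); the paper leaves this interchange implicit. That is a genuinely useful addition, and it is handled correctly. One small point of care: since both Siegel transforms in Kelmer--Yu's primitive formula are primitive, you must decompose both $\widehat{f}$ and $\widehat{g}$, so the dominating function for the partial sums should really be $\widehat{|f|}\,\widehat{|g|}$ rather than $|\widehat{f}|\,\widehat{|g|}$ --- but both are $\ll_{\mathrm{supp}\,f,\,\mathrm{supp}\,g} \alpha^2$, so the conclusion is unaffected.
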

\begin{proof}
Denote the primitive Siegel transform of a function $h$ by $\widetilde{h}$. It follows immediately from \cite[Remark 5.14]{kelmer} and \cite[Eq. (5.11)]{kelmer} that if $f_k$ and $g_j$ are even and bounded and have compact support, 
 \begin{align}\label{kelmersimpler}
\int_\Y \widetilde{f_k} \widetilde{g_j} \hsp \dd \mu = \frac{1}{\zeta (2d)} \int_{\R^{2d}} \mathcal{P}_{f_k} g_j \hsp \dd \textbf{x},
\end{align} 
where the operator $\mathcal{P}$ maps any bounded, compactly supported, measurable function $h$ on $\R^{2d}$ to the function $\mathcal{P}_h : \R^{2d} \setminus \lbr \0 \rbr \longrightarrow \R$, whose value at $\textbf{x} = K a_y \textbf{e}_{2d} \in \R^{2d} \setminus \lbr \0 \rbr$ is
\begin{align*}
\mathcal{P}_h (\textbf{x}) = \int_{Y'} \int_{\lb 0, 1 \rp^{2d-1}} \widetilde{h}\lp K a_y m(A) u_{\textbf{t},s} \Z^{2d} \rp \hsp \dd \textbf{t} \hsp \dd s \hsp \dd  \mu_{d-1} (A),
\end{align*}
cf. \cite[Eq. (2.2)]{kelmer} and the proof of \cite[Prop. 2.2]{kelmer}. If we now take $f_k (\textbf{x}) = f (k \textbf{x})$ and $g_j (\textbf{x}) = g(j \textbf{x})$ in (\ref{kelmersimpler}), the relation 
 \begin{align*}
\widehat{h}(\Lambda ) = \sum_{k \geqslant 1} \widetilde{h_k} (\Lambda) \quad (\text{$h$ even, bounded and with compact support)}
\end{align*} 
shows that 
 \begin{align*}
\int_\Y \widehat{f} \widehat{g} \hsp \dd \mu = \sum_{k ,j \geqslant 1} \int_\Y \widetilde{f_k} \widetilde{g_j} \hsp \dd \mu.
\end{align*} 
Hence, it suffices to show that 
 \begin{align*}
\sum_{k \geqslant 1} \mathcal{P}_{f_k} = P_f,
\end{align*} 
but this is clear from the definition of $\mathcal{P}$. 
\end{proof}
For later use, we record an additional corollary to \cite[Theorem 1.1]{kelmer}.
\begin{lemma}\label{siegelestnorm}
Suppose $f : \R^{2d} \rightarrow \R$ is bounded and has compact support. Then 
 \begin{align*}
\left\Vert \widehat{f} \right\Vert_2^2 \ll \Vert f \Vert_2^2 + \Vert f \Vert_1^2. 
\end{align*} 
\end{lemma}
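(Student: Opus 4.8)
The plan is to symmetrise $f$, feed the result into the second moment formula of Theorem~\ref{kelmerthm}, and then split the period $P_f$ into a ``diagonal'' piece controlled by Cauchy--Schwarz and a ``slice'' piece controlled by unfolding, arranging matters so that the final implied constant depends only on $d$. For the reduction, put $g(\textbf{x}) := |f(\textbf{x})| + |f(-\textbf{x})|$; then $g$ is even, non-negative, bounded and compactly supported, the substitution $\textbf{v}\mapsto-\textbf{v}$ on $\Lambda\setminus\lbr\0\rbr$ gives $\widehat{g} = 2\widehat{|f|}$, hence $|\widehat{f}|\leqslant\widehat{|f|}=\tfrac{1}{2}\widehat{g}$ pointwise on $\Y$, while $\Vert g\Vert_1\leqslant 2\Vert f\Vert_1$ and $\Vert g\Vert_2\leqslant 2\Vert f\Vert_2$. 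So it suffices to treat $g$, and from now on $f\geqslant 0$ is even. (That $\Vert\widehat{f}\Vert_2<\infty$ at all is already guaranteed by Proposition~\ref{schmidt-alpha} together with Theorem~\ref{aaalphaint}, so applying Theorem~\ref{kelmerthm} below is not circular.) Taking both functions equal to $f$ in Theorem~\ref{kelmerthm} yields
 \begin{align*}
\Vert\widehat{f}\Vert_2^2 = \int_{\Y}\widehat{f}^{\,2}\hsp\dd\mu = \frac{1}{\zeta(2d)}\sum_{j\geqslant 1}\int_{\R^{2d}}P_f(\textbf{x})\,f(j\textbf{x})\hsp\dd\textbf{x},
\end{align*}
and every summand is non-negative, so it is enough to bound the right-hand side.

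\emph{Unfolding $P_f$.} Writing $\textbf{v}\in\Z^{2d}$ in the block form $(v_1,\textbf{v}',v_{2d})$ adapted to $m(A)$ and $u_{\textbf{t},s}$, I would split $\sum_{\textbf{v}\neq\0}$ in (\ref{perioddef}) according to whether $v_1 = 0$, and within $v_1 = 0$ whether $\textbf{v}' = \0$. The terms with $v_1 = 0 = \textbf{v}'$ equal $f\lp Ka_y v_{2d}\textbf{e}_{2d}\rp = f(v_{2d}\textbf{x})$, independent of $A$, $\textbf{t}$, $s$, so these contribute exactly $\sum_{k\neq 0}f(k\textbf{x})$ to $P_f(\textbf{x})$. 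In every other term either $v_1\neq 0$ or $v_1 = 0\neq\textbf{v}'$; when $v_1\neq 0$ it is a non-zero integer, so integrating in $s$ and $\textbf{t}$ and summing in $v_{2d}$ and $\textbf{v}'$ tiles $\R$ and $\R^{2d-2}$ and turns those sums into Lebesgue integrals, after which the integral over $Y'$ collapses; when $v_1 = 0\neq\textbf{v}'$ the integration in $\textbf{t}$ together with Siegel's mean value theorem on $Y'\simeq Y_{2d-2}$ plays the analogous role. Either way one is left with ordinary integrals of $f$ over affine hyperplanes of $\R^{2d}$, so that $P_f(\textbf{x})$ is exhibited as $\sum_{k\neq 0}f(k\textbf{x})$ plus a non-negative term assembled from such hyperplane slices (whose total mass over the family of parallel slices is $\Vert f\Vert_1$).

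\emph{Estimating the two pieces.} For the diagonal piece, $\int_{\R^{2d}}\lp\sum_{k\neq 0}f(k\textbf{x})\rp f(j\textbf{x})\hsp\dd\textbf{x} = \sum_{k\neq 0}\int_{\R^{2d}}f(k\textbf{x})f(j\textbf{x})\hsp\dd\textbf{x}$, and Cauchy--Schwarz bounds each integral by $\Vert f(k\,\cdot)\Vert_2\Vert f(j\,\cdot)\Vert_2 = (|k|\hsp j)^{-d}\Vert f\Vert_2^2$; since $d\geqslant 2$ the series over $k\neq 0$ and over $j\geqslant 1$ converge and this piece contributes $\ll\Vert f\Vert_2^2$. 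For the slice piece I would pass to polar coordinates $\textbf{x} = r\omega$, so that the radial variable $r = \Vert\textbf{x}\Vert$ is integrated simultaneously with the auxiliary height parameter that appears in the unfolding; after the substitutions arising there, each of the resulting Euclidean integrals is bounded by $\Vert f\Vert_1^2$ times a term of a convergent numerical series in $j$ (and the companion index), giving a total $\ll\Vert f\Vert_1^2$. Combining the two estimates gives $\Vert\widehat{f}\Vert_2^2\ll\Vert f\Vert_2^2 + \Vert f\Vert_1^2$.

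\emph{Main obstacle.} The delicate step is the last estimate. The hyperplane-slice functions of $f$ occurring in $P_f$ are not controlled pointwise --- the ``Riemann sums'' $\Vert\textbf{x}\Vert^{-1}\sum_k(\text{slice of }f\text{ at height }k/\Vert\textbf{x}\Vert)$ that build them up can be arbitrarily large for a fixed direction --- so one cannot simply bound $P_f(\textbf{x})$ at a point: that would only produce something of the shape $\Vert f\Vert_\infty\Vert f\Vert_1$ with a constant depending on $\mathrm{diam}\,\mathrm{supp}\,f$, which is too weak. One must instead keep these sums under the radial integral and estimate the whole expression at once, in order to recover the scale-invariant bound $\Vert f\Vert_2^2 + \Vert f\Vert_1^2$ with an implied constant depending only on $d$; this averaging argument is the real content of the lemma.
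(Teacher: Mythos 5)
Your reduction to even (and moreover non-negative) $f$ is fine, and your treatment of the diagonal piece $\sum_{n\neq 0}f(n\textbf{x})$ via Cauchy--Schwarz is correct. But the proposal as written does not prove the lemma, because the estimate of the slice piece is asserted, not carried out, and you say so yourself: in the \emph{Main obstacle} paragraph you explicitly identify the bound $\sum_{j\geqslant 1}\int S(\textbf{x})f(j\textbf{x})\,\dd\textbf{x}\ll\Vert f\Vert_1^2$ as ``the delicate step'' and ``the real content of the lemma'' and leave it there. Merely saying ``after the substitutions arising there, each of the resulting Euclidean integrals is bounded by $\Vert f\Vert_1^2$ times a term of a convergent numerical series'' is not a proof --- you would at minimum have to specify the substitution, verify that the radial integration and the height-sum combine into the product structure you are claiming, and confirm that the hyperplane slices do contribute only to the $\Vert f\Vert_1^2$ term and not cross-contaminate the $\Vert f\Vert_2^2$ term. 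You have correctly diagnosed why a pointwise bound on $P_f$ is insufficient (it would yield a $\Vert f\Vert_\infty\Vert f\Vert_1$-type estimate with a support-dependent constant), but identifying the obstacle is not the same as overcoming it.

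For comparison, the paper's own proof is one line and does not unfold $P_f$ at all: after reducing to even $f$, it invokes the fact, proved by Kelmer--Yu (their Eq.\ (5.11) together with Prop.\ 5.5), that the map
$\iota(f) = -f + \tfrac{1}{2}\bigl(\mathcal{P}_f - \tfrac{1}{\zeta(2d)}\int f\bigr)$
is an $L^2$-isometry on even functions. From this, $\Vert\mathcal{P}_f\Vert$-type pairings with $f$ are controlled directly by $\Vert f\Vert_2$ and $\int f$, and the lemma follows from Kelmer--Yu's Theorem 1.1 without ever touching the combinatorics of the period integral. Your route is a reasonable ``hands-on'' alternative, and the diagonal/slice decomposition via Proposition \ref{riemanntotherescue} is the right way to set it up, but to make it a proof you would need to actually perform the averaging estimate on the slice piece --- precisely the work that the $L^2$-isometry result packages up for you. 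As it stands there is a genuine gap.
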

\begin{proof}
Since the (primitive) Siegel transform only depends on the even part of a function, we can assume that $f$ is an even function. In this case the claim follows immediately from \cite[Theorem 1.1]{kelmer} and the fact that the map 
\begin{align*}
\iota : L^2_\mathrm{even} \lp \R^{2d} \setminus \lbr \0 \rbr \rp \longrightarrow L^2_\mathrm{even} \lp \R^{2d} \setminus \lbr \0 \rbr \rp, \quad \quad \iota(f) = -f + \frac{1}{2}\lp \mathcal{P}_f - \frac{1}{\zeta (2d)} \int_{\R^{2d}} f(\textbf{x}) \hsp \dd \textbf{x} \rp 
\end{align*}
is an $L^2$-isometry (cf. \cite[Eq. (5.11)]{kelmer} and \cite[Prop. 5.5]{kelmer}).
\end{proof}
We will also need the following result.
\begin{prop}\label{riemanntotherescue}
Let $f : \R^{2d} \rightarrow \R$ be a Riemann integrable function whose support is compact and does not contain the origin. If $\textbf{x} \in \R^{2d} \setminus \lbr \0 \rbr$ is written in polar coordinates as $\textbf{x} = K_\textbf{x} \Vert \textbf{x} \Vert \textbf{e}_{2d}$, one has
 \begin{align*}
P_f \lp \textbf{x} \rp = \sum_{n \neq 0} f \lp n \textbf{x} \rp  +  \Vert \textbf{x} \Vert^{-1} \sum_{n \in \Z} \int_{\R^{2d-1}} f \lp K_\textbf{x} \lp \Vert \textbf{x} \Vert^{-1} n, \textbf{r} \rp^\intercal \rp \hsp \dd \textbf{r}.
\end{align*} 
In particular, 
 \begin{align}\label{thereistherub}
P_f (\textbf{x}) = \int_{\R^{2d}} f(\textbf{r}) \hsp \dd \textbf{r} \hsp + O \lp \Vert \textbf{x} \Vert^{-1} \rp, \quad \quad \textbf{x} \in \R^{2d} \setminus \lbr \0 \rbr.
\end{align} 
\end{prop}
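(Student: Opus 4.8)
The plan is to unfold the definition (\ref{perioddef}) of $P_f$ by splitting the sum over $\textbf{v} \in \Z^{2d} \setminus \lbr \0 \rbr$ according to the first and ``middle'' coordinates of $\textbf{v}$. Writing $\textbf{v} = \lp v_1, \textbf{w}, v_{2d} \rp^\intercal$ with $v_1, v_{2d} \in \Z$, $\textbf{w} \in \Z^{2d-2}$, a direct matrix computation gives
\[
a_y m(A) u_{\textbf{t},s} \textbf{v} = \lp y v_1, \, A \lp v_1 \textbf{t} + \textbf{w} \rp, \, \lp v_1 s + \textbf{t}^* \cdot \textbf{w} + v_{2d} \rp / y \rp^\intercal ,
\]
and, since $\textbf{x} = K a_y \textbf{e}_{2d} = \|\textbf{x}\|^{-1} K \textbf{e}_{2d}$ forces $K_\textbf{x} = K$ and $y = \|\textbf{x}\|^{-1}$, the summand is $f$ of $K_\textbf{x}$ applied to this vector. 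I would treat three families: (I) $v_1 = 0$, $\textbf{w} = \0$; (II) $v_1 \neq 0$; (III) $v_1 = 0$, $\textbf{w} \neq \0$. In family (I) one has $u_{\textbf{t},s} \textbf{v} = v_{2d} \textbf{e}_{2d}$, so the summand equals $f\lp v_{2d} \textbf{x} \rp$ with no dependence on $\textbf{t}, s, A$; as the measures on $\lb 0, 1 \rp^{2d-1}$ and on $Y'$ are probability measures, family (I) contributes exactly $\sssum_{n \neq 0} f(n\textbf{x})$, the first term of the claimed formula.

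For family (II), fix $v_1 = n \neq 0$. I would first carry out the $v_{2d}$-sum and the $s$-integral simultaneously: as $ns + v_{2d}$ ranges over $\R$ with multiplicity $|n|$ while $s \mapsto ns$ has Jacobian $|n|$, this replaces the last coordinate by a free variable and outputs the factor $y$, the outcome being independent of $\textbf{t}^* \cdot \textbf{w}$. What remains depends on $\textbf{t}, \textbf{w}$ only through $n\textbf{t} + \textbf{w}$, so summing over $\textbf{w} \in \Z^{2d-2}$ and integrating over $\textbf{t} \in \lb 0, 1 \rp^{2d-2}$ turns these into an integral over $\R^{2d-2}$ (the multiplicity $|n|^{2d-2}$ cancels the Jacobian), after which the substitution $\textbf{u} \mapsto A\textbf{u}$ (determinant $1$) removes the $A$-dependence and the $Y'$-integral is trivial. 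The net contribution of family (II) is $\sssum_{n \neq 0} \|\textbf{x}\|^{-1} \inttt_{\R^{2d-1}} f\lp K_\textbf{x} \lp \|\textbf{x}\|^{-1} n, \textbf{r} \rp^\intercal \rp \dd \textbf{r}$.

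Family (III) is the crux. Here $s$ does not appear and $v_1 = 0$, but $\textbf{t}^* \cdot \textbf{w}$ is a nonzero integral linear form in $\textbf{t}$; picking a variable $t_{j_0}$ with nonzero integer coefficient $m_0$ and integrating it against the $\Z$-sum over $v_{2d}$, the inner function is $1$-periodic in $m_0 t_{j_0}$, so the $t_{j_0}$-integral over $\lb 0, 1 \rp$ again unfolds the $v_{2d}$-sum and yields $y \inttt_\R f\lp K_\textbf{x} \lp 0, A\textbf{w}, r \rp^\intercal \rp \dd r$, with no dependence on the remaining $t_j$'s. One is then left with $\|\textbf{x}\|^{-1} \inttt_{Y'} \widehat{F_0}\lp A\Z^{2d-2} \rp \dd \mu_{d-1}(A)$, where $F_0(\textbf{u}) := \inttt_\R f\lp K_\textbf{x}(0, \textbf{u}, r)^\intercal \rp \dd r$ is bounded with compact support and $Y' \simeq Y_{2d-2}$. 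Applying the symplectic Siegel mean value theorem \cite[Theorem 2]{mosko} on $Y_{2d-2}$ turns this into $\|\textbf{x}\|^{-1} \inttt_{\R^{2d-2}} F_0 \dd \textbf{u} = \|\textbf{x}\|^{-1} \inttt_{\R^{2d-1}} f\lp K_\textbf{x}(0, \textbf{r})^\intercal \rp \dd \textbf{r}$, which is precisely the $n = 0$ term absent from family (II). Adding the three contributions proves the identity.

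Finally, for the displayed asymptotic: the first sum is $O\lp \|\textbf{x}\|^{-1} \rp$ since it is empty once $\|\textbf{x}\|$ exceeds the outer radius of $\mathrm{supp}\, f$ and otherwise has $O\lp \|\textbf{x}\|^{-1} \rp$ terms, each bounded by $\|f\|_\infty$; and with $G(a) := \inttt_{\R^{2d-1}} f\lp K_\textbf{x}(a, \textbf{r})^\intercal \rp \dd \textbf{r}$ one has $\inttt_\R G = \inttt_{\R^{2d}} f$ by Fubini and rotation-invariance of Lebesgue measure, while the second sum is the Riemann sum $\|\textbf{x}\|^{-1} \sssum_{n \in \Z} G\lp \|\textbf{x}\|^{-1} n \rp$ for $\inttt_\R G$ with mesh $\|\textbf{x}\|^{-1}$, whose error is $O\lp \|\textbf{x}\|^{-1} \rp$ because $f$ Riemann integrable with compact support makes $G$ bounded, supported in a fixed interval, and of bounded variation. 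I expect the main difficulty to be the observation in family (III) that the residual average over $\textbf{w} \in \Z^{2d-2}$ and over $A \in Y'$ is an integral of a Siegel transform on the lower-rank symplectic lattice space, which is what conjures up the $n = 0$ term; the rest is careful bookkeeping of covering multiplicities and Jacobians in the unfolding, together with the standard Riemann-sum estimate.
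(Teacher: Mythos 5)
Your proof is correct and follows essentially the same route as the paper: the three families (I), (II), (III) correspond exactly to the paper's cases $v_1 = 0$ with $\tilde{\textbf{v}} = \0$, $v_1 \neq 0$, and $v_1 = 0$ with $\tilde{\textbf{v}} \neq \0$, you unfold the $v_{2d}$-sum and $s$- or $t$-integrals by the same covering/multiplicity argument, and the crucial identification of the residual average in family (III) as an integral of a Siegel transform over $Y_{2d-2}$ to be evaluated via the symplectic mean value theorem is precisely what the paper does. Your Riemann-sum error estimate at the end is if anything slightly more careful than the paper's (which handles large and small $\Vert\textbf{x}\Vert$ somewhat informally).
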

\textsc{Remark.} In order to prove that the limiting variance in Theorem \ref{maintheorem} is strictly positive, a better understanding of the remainder term in (\ref{thereistherub}) is necessary, cf. the computation before Lemma \ref{BGeq415} .
\begin{proof}
To ease the notation, we will write $K = K_\textbf{x}$ and $y = \Vert \textbf{x} \Vert^{-1}$. Take any non-zero $\textbf{v} = (v_1, \ldots, v_{2d})^\intercal \in \Z^{2d}$. If we write $\widetilde{\textbf{v}} := (0, v_2, \ldots, v_{2d-1}, 0)^\intercal$ and $\textbf{t} = (t_2, \ldots, t_{2d-1})^\intercal$, then we note that
 \begin{align*}
u_{\textbf{t},s}\textbf{v} = \lp \begin{matrix}
v_1 \\
t_2 v_1 + v_2 \\
\vdots \\
t_{2d-1} v_1 + v_{2d-1} \\
sv_1 + (0,\textbf{t}^*,0) \cdot \widetilde{\textbf{v}} + v_{2d}
\end{matrix} \rp.
\end{align*} 
We now consider two cases, and in each case we study the integral 
 \begin{align}\label{integral}
\int_{\lb 0, 1 \rp^{2d-1}}  f\lp K a_y m(A) u_{\textbf{t},s} \textbf{v} \rp \hsp \dd t_2 \hsp \cdots \hsp \dd t_{2d-1} \hsp \dd s.
\end{align} 
\par In the first case, assume that $v_1 = 0$. If $v_i \neq 0$ for some $i = 2, \ldots, 2d-1$, assume with no loss of generality that $v_2 > 0$. Since the integral converges absolutely, we can change the order of integration so that we first integrate with respect to the variable $t_{2d-1}$. Changing variables in this integral by letting $r = r(t_{2d-1}) = (0,\textbf{t}^*,0) \cdot \widetilde{\textbf{v}} + v_{2d}$, we find that the integral with respect to $t_{2d-1}$ equals
 \begin{align*}
v_2^{-1} \int_{(0,\textbf{t}^*,0) \cdot \widetilde{\textbf{v}} + v_{2d} - v_2 t_{2d-1}}^{(0,\textbf{t}^*,0) \cdot \widetilde{\textbf{v}} + v_{2d} - v_2 t_{2d-1} + v_2} f \lp K a_y m(A) (0, v_2, \ldots, v_{2d-1}, r)^\intercal \rp \hsp \dd r ,
\end{align*} 
where we stress that the bounds of the integral are independent of $t_{2d-1}$. By summing this over $v_{2d} \in \Z$, we see that the resulting domains of integration 
 \begin{align*}
\lb (0,\textbf{t}^*,0) \cdot \widetilde{\textbf{v}} + v_{2d} - v_2 t_{2d-1}, (0,\textbf{t}^*,0) \cdot \widetilde{\textbf{v}} + v_{2d} - v_2 t_{2d-1} + v_2 \rb
\end{align*} 
cover the real line exactly $v_2$ times. Therefore, this sum over $v_{2d} \in \Z$ equals 
 \begin{align}\label{yeah}
\int_{\R} f \lp K a_y m(A) \lp 0, v_2, \ldots, v_{2d-1}, r \rp^\intercal \rp \hsp \dd r.
\end{align} 
\par Next, if both $v_1 = 0$ and $\widetilde{\textbf{v}} = 0$, the integral (\ref{integral}) equals
 \begin{align}\label{thelittleonebehindyouismouse}
f \lp K a_y \lp 0, \ldots, 0, v_{2d} \rp^\intercal \rp = f\lp v_{2d} K a_y \textbf{e}_{2d} \rp = f \lp v_{2d} \textbf{x} \rp.
\end{align} 
We conclude that with $F_{K,y} \lp r_2, \ldots, r_{2d-1} \rp := \int_{\R} f \lp K a_y \lp 0, r_2, \ldots, r_{2d-1}, r_{2d} \rp^\intercal \rp \hsp \dd r_{2d}$,
 \begin{align}\label{conclusion1}
\nonumber &\sum_{v_{2d} \in \Z}\hsp  \int_0^1 \cdots \int_0^1 \int_0^1 f\lp K a_y m(A) u_{\textbf{t},s} \textbf{v} \rp \hsp \dd t_2 \cdots \hsp \dd t_{2d-1} \hsp \dd s \\
&\quad \quad = F_{K,y} \lp A \lp v_2, \ldots, v_{2d-1} \rp^\intercal \rp + \sum_{v_{2d} \in \Z \setminus \lbr 0 \rbr} f \lp v_{2d} \textbf{x} \rp
\end{align}  
whenever $v_1 = 0$.
\par In the second case, assume that $v_1 \neq 0$. Without loss of generality we then assume that $v_1 > 0$. Let $\textbf{r}(t_2, \ldots, t_{2d-1}, s) := (r_2, \ldots, r_{2d})^\intercal$ be given by the relation
 \begin{align*}
u_{\textbf{t},s} \textbf{v} = \lp \begin{matrix}
v_1 \\
r_2 \\
r_3 \\
\vdots \\
r_{2d}
\end{matrix} \rp,
\end{align*} 
so that $\dd \textbf{r} = v_1^{2d-1} \dd t_2 \cdots \dd t_{2d-1} \, \dd s$. Then we find that (\ref{integral}) is equal to
 \begin{align*}
v_1^{-(2d-1)} \int_{\textbf{r}\lp \lb 0, 1 \rp^{2d-1} \rp} f(K a_y m(A) \lp v_1,\textbf{r} \rp^\intercal) \hsp \dd \textbf{r}
\end{align*} 
where 
 \begin{align*}
\textbf{r}\lp \lb 0, 1 \rp^{2d-1} \rp = \prod_{i = 2}^{2d-1} \lb v_i, v_i + v_1 \rp \times \lb (0,\textbf{t}^*,0) \cdot \widetilde{\textbf{v}} + v_{2d}, (0,\textbf{t}^*,0) \cdot \widetilde{\textbf{v}} + v_{2d} + v_1 \rp.
\end{align*} 
When summing this integral over all $v_2, \ldots, v_{2d} \in \Z$, we see that each of the factors in the product set $\textbf{r}\lp \lb 0, 1 \rp^{2d-1} \rp$ covers the real line exactly $v_1$ times. Consequently, this sum over all $v_2, \ldots, v_{2d} \in \Z$ equals 
 \begin{align}\label{conclusion2}
\sum_{v_i \in \Z \atop (i\geqslant 2)} \int_{\lb 0, 1\rp^{2d-1}} f\lp K a_y m(A) u_{\textbf{t},s} \textbf{v} \rp \hsp \dd \textbf{t} \hsp \dd s = \int_{\R^{2d-1}} f(Ka_y m(A) (v_1, r_2, \ldots, r_{2d})^\intercal ) \hsp \dd \textbf{r}.
\end{align} 
whenever $v_1 \neq 0$.
\par By (\ref{conclusion1}) and (\ref{conclusion2}) we now have 
 \begin{align*}
\sum_{\textbf{v} \in \Z^{2d} \setminus \lbr \0 \rbr} \int_{\lb 0, 1 \rp^{2d-1}} f\lp K a_y m (A) u_{\textbf{t},s} \textbf{v} \rp \hsp \dd \textbf{t} \hsp \dd s &= \sum_{\widetilde{\textbf{v}} \in \Z^{2d-2} \setminus \lbr \0 \rbr} F_{K,y} \lp A \widetilde{\textbf{v}} \rp  + \sum_{v_{2d} \in \Z \setminus \lbr 0 \rbr} f \lp v_{2d} \textbf{x} \rp \\ &\quad \quad + \sum_{v_1 \neq 0} \int_{\R^{2d-1}} f \lp K a_y m(A) \lp v_1, r_2, \ldots, r_{2d} \rp^\intercal \rp \hsp \dd \textbf{r}.
\end{align*} 
When we integrate this over $Y'$, Siegel's mean value theorem shows that the integral of the first sum is
 \begin{align*}
\int_{\R^{2d-2}} F_{K,y}(\textbf{r}) \hsp \dd \textbf{r} &= y \int_{\R^{2d-1}} f \lp K \lp 0, \textbf{r} \rp^\intercal \rp \hsp \dd \textbf{r}.
\end{align*} 
In the integral of the last sum, we note that the $\SP(2d-2, \R)$-invariance of Lebesgue measure on $\R^{2d-2}$ allows us to drop the $m(A)$ at the cost of a factor of $\mathrm{Vol} \lp Y'\rp = 1$. Hence the integral of the last sum is
 \begin{align*}
\sum_{v_1 \neq 0} \int_{\R^{2d-1}} f \lp K a_y \lp v_1, \textbf{r} \rp^\intercal \rp \hsp \dd \textbf{r} = y \sum_{v_1 \neq 0} \int_{\R^{2d-1}} f \lp K \lp yv_1, \textbf{r} \rp^\intercal \rp \hsp \dd \textbf{r}.
\end{align*} 
Putting everything together, we obtain the claimed formula for $P_f$. \par 
To prove the last claim, we note that the first sum in the formula vanishes for $\Vert \textbf{x} \Vert$ large enough since $f$ has compact support, and that the second sum is a (one-dimensional) Riemann sum for $\int f \, \dd \textbf{r}$ with step length $1/ \Vert \textbf{x} \Vert$. Hence, it suffices to show the claim when $\textbf{x}$ lies in some ball centered at the origin, say $\Vert \textbf{x} \Vert \leqslant R$. To this end, note first that due to the compact support of $f$, when $\Vert \textbf{x} \Vert$ is small enough, the Riemann sum equals
 \begin{align*}
\lev \textbf{x} \rev^{-1} \int_{\R^{2d-1}} f \lp K \lp 0, \textbf{r} \rp^\intercal \rp \hsp \dd \textbf{r} \leqslant \frac{C_0}{\lev \textbf{x} \rev}
\end{align*} 
for some $C_0 > 0$. Hence, we may certainly find $C_1 > 0$ such that
 \begin{align*}
\left| \int_{\R^{2d}} f \hsp \dd \textbf{r} - \Vert \textbf{x} \Vert^{-1} \sum_{n \in \Z} \int_{\R^{2d-1}} f \lp K_\textbf{x} \lp \Vert \textbf{x} \Vert^{-1} n, \textbf{r} \rp^\intercal \rp \hsp \dd \textbf{r} \right| \leqslant \frac{C_0}{\lev \textbf{x} \rev} + C_1 
\end{align*} 
for all $\Vert \textbf{x} \Vert \leqslant R$, $\textbf{x} \neq 0$. Next, we observe that if $M$ and $m$ denote the maximal and minimal lengths, respectively, of any vector in the support of $f$, we have
 \begin{align*}
\sum_{n \neq 0} f(n \textbf{x}) \leqslant \lev f \rev_\infty \sum_{n \neq 0} \mathbbm{1} \lp | n | \in \lb \Vert \textbf{x} \Vert^{-1}m, \Vert \textbf{x} \Vert^{-1}M \rb \rp = O\lp \Vert \textbf{x} \Vert^{-1} \rp. 
\end{align*} 
Combining these results and observing that $C_1 = O \lp \Vert \textbf{x} \Vert^{-1} \rp$ for $\Vert \textbf{x} \Vert \leqslant R$, we obtain the claim in this case.
\end{proof}
We now prove (\ref{aux01}) by verifying that 
 \begin{align*}
\lim_{N \rightarrow \infty} \big\| G_N \big\|_2^2 = \lim_{N \rightarrow \infty} \sum_{\pm s = 0}^{N-1} \lp 1 - \frac{|s|}{N} \rp \lp \int_{\Y} \widehat{\chi}_2 \cdot \lp \widehat{\chi}_2 \circ b^s \rp - \mu \lp \widehat{\chi}_2 \rp^2 \hsp \dd \mu \rp < \infty,
\end{align*} 
cf. the proof of (\ref{smoothf-finitevariance}). This, in turn, will follow from the dominated convergence theorem if we can show that with $f = \chi_2$ and $g = f \circ b^s$,
 \begin{align}\label{dccomics}
\sum_{\pm s = 0}^\infty \left| \int_{\Y} \widehat{f} \widehat{g} - \mathrm{Leb}(f)^2 \hsp \dd \mu  \right| < \infty.
\end{align} 
We begin by noting that if $\textbf{x} = j^{-1} b^{-s} (\textbf{x}_1, \textbf{x}_2 ) \in \mathrm{supp} (g_j) = j^{-1}b^{-s} \Omega_2$, one has
 \begin{align*}
\Vert \textbf{x} \Vert^2 = j^{-2} \lp 2^{-2s} \Vert \textbf{x}_1 \Vert^2 + 2^{2s} \Vert \textbf{x}_2 \Vert^2 \rp \gg j^{-2} (2^{-2s} + 2^{2s}) \gg j^{-2} 2^{2|s|},  
\end{align*} 
and hence $\min \lbr \Vert \textbf{x} \Vert : \textbf{x} \in j^{-1} b^{-s} \Omega_2 \rbr \gg j^{-1}2^{|s|}$. It now follows from Theorem \ref{kelmerthm}, Proposition \ref{riemanntotherescue}, and this estimate that 
 \begin{align*}
\int_\Y \widehat{f} \widehat{g} - \mathrm{Leb} (f)^2 \hsp \dd \mu &= \frac{1}{\zeta(2d)} \sum_{j \geqslant 1} \int_{\R^{2d}} P_{f} (\textbf{x}) g(j\textbf{x}) \, \, \dd \textbf{x} - \mathrm{Leb}(f)^2 \\
&= \frac{1}{\zeta(2d)} \sum_{j \geqslant 1} \lp \int_{\R^{2d}} P_f(\textbf{x})g(j\textbf{x}) \hsp \dd \textbf{x} - \mathrm{Leb}(g_j) \mathrm{Leb}(f) \rp \\
&= \frac{1}{\zeta(2d)} \sum_{j \geqslant 1} \int_{j^{-1} b^{-s} \Omega_2} \lp P_f(\textbf{x}) - \mathrm{Leb}(f) \rp \hsp \dd \textbf{x}  \\
&\ll \sum_{j \geqslant 1} \int_{j^{-1} b^{-s} \Omega_2} \Vert \textbf{x} \Vert^{-1} \hsp \dd \textbf{x} \\[+0.5em]
&\ll \sum_{j \geqslant 1} \vol (j^{-1} b^{-s} \Omega_2) j 2^{-|s|} \\[+0.5em]
&= \zeta(2d-1) \vol (\Omega_2) 2^{-|s|},
\end{align*} 
where we recall that $g_j$ denotes the composition of $g$ with multiplication by $j$. This proves (\ref{dccomics}) and hence (\ref{aux01}).\par 
We now prove (\ref{aux02}). We first need the following lemma.
\begin{lemma}\label{BGeq415}
We have $ \lev \widehat{f_\varepsilon} \lp 1 - \eta_L \rp \rev_2 \ll_{f_\varepsilon} L^{-(d - 2)/2}$.
\end{lemma}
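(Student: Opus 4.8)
The plan is to bound $\lev \widehat{f_\varepsilon}(1 - \eta_L) \rev_2$ by an $L^2$-tail of the height function $\alpha$, using only the pointwise estimate of Proposition \ref{schmidt-alpha} together with the integrability furnished by Theorem \ref{aaalphaint}. First I would use property (\ref{alphatwo}) of Lemma \ref{etaisgod}: since $\eta_L \equiv 1$ on $\lbr \alpha \leqslant c^{-1}L \rbr$ and $0 \leqslant \eta_L \leqslant 1$ everywhere, the function $1 - \eta_L$ vanishes off $\lbr \alpha > c^{-1}L \rbr$ and is bounded by $1$, so that
\begin{align*}
\lev \widehat{f_\varepsilon}(1 - \eta_L) \rev_2^2 = \int_{\Y} \widehat{f_\varepsilon}^{\,2} (1 - \eta_L)^2 \hsp \dd \mu \leqslant \int_{\lbr \alpha > c^{-1}L \rbr} \widehat{f_\varepsilon}^{\,2} \hsp \dd \mu.
\end{align*}
Next, because $f_\varepsilon$ is bounded with compact support, with $\Vert f_\varepsilon \Vert_\infty \leqslant 1$ and $\mathrm{supp}\, f_\varepsilon$ uniformly bounded in $\varepsilon$, Proposition \ref{schmidt-alpha} yields the pointwise bound $\big| \widehat{f_\varepsilon}(\Lambda) \big| \ll_{f_\varepsilon} \alpha(\Lambda)$, reducing the task to estimating $\int_{\lbr \alpha > c^{-1}L \rbr} \alpha^2 \hsp \dd \mu$.

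The heart of the argument is then a Chebyshev-type tail estimate for $\alpha$. Writing $\alpha^2 = \alpha^{d} \cdot \alpha^{2-d}$ and noting that $2 - d \leqslant 0$ (valid since $d \geqslant 2$), on $\lbr \alpha > c^{-1}L \rbr$ one has $\alpha^{2-d} \leqslant (c^{-1}L)^{2-d}$, so that
\begin{align*}
\int_{\lbr \alpha > c^{-1}L \rbr} \alpha^2 \hsp \dd \mu \leqslant (c^{-1}L)^{2-d} \int_{\Y} \alpha^{d} \hsp \dd \mu \ll_{c} \lev \alpha \rev_d^{d} \cdot L^{-(d-2)},
\end{align*}
where $\lev \alpha \rev_d < \infty$ by Theorem \ref{aaalphaint} applied with $p = d$. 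Combining the two displays and taking square roots gives $\lev \widehat{f_\varepsilon}(1 - \eta_L) \rev_2 \ll_{f_\varepsilon} L^{-(d-2)/2}$, the fixed threshold constant $c > 1$ being absorbed into the implied constant.

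I do not anticipate any genuine obstacle here: the proof is essentially a one-line application of Proposition \ref{schmidt-alpha} followed by a tail bound. The only point deserving attention is that one must invoke the \emph{endpoint} integrability $\alpha \in L^d(\Y)$ supplied by Theorem \ref{aaalphaint} rather than a lower exponent — it is exactly this that produces the exponent $-(d-2)/2$, and correspondingly the estimate degenerates to a triviality at $d = 2$, consistent with the first remark following Theorem \ref{maintheorem}.
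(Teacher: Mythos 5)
Your proof is correct and follows essentially the same route as the paper: reduce to a tail integral of $\alpha^2$ via Proposition \ref{schmidt-alpha}, then exploit the endpoint integrability $\alpha \in L^d(\Y)$ from Theorem \ref{aaalphaint}. The only difference is that you replace the paper's Hölder-plus-Chebyshev step (with exponents $v = d/2$, $w = d/(d-2)$) by the equivalent direct pointwise bound $\alpha^2 \leqslant \alpha^d (c^{-1}L)^{2-d}$ on the tail set, which yields the same $L^{-(d-2)}$ after integrating.
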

\begin{proof}
We follow the proof of \cite[Eq. (4.15)]{BG2}. Proposition \ref{schmidt-alpha} shows that
 \begin{align*}
\lev \widehat{f_\varepsilon} \lp 1 - \eta_L \rp \rev_2^2 = \int_{\Y} \widehat{f_\varepsilon}^2 \lp 1 - \eta_L \rp^2 \hsp \dd \mu \ll_{\mathrm{supp}\lp f_\varepsilon \rp} \lev f_\varepsilon \rev_\infty^2 \int_{\Y} \mathbbm{1}\lp \alpha \geqslant c^{-1} L \rp \alpha^2 \hsp \dd \mu, 
\end{align*} 
since, by construction, $1 - \eta_L = 0$ whenever $\alpha < c^{-1} L$. If we now have real numbers $v$ and $w$ satisfying $1/v + 1/w = 1$, Hölder's inequality and Theorem \ref{aaalphaint} show that the right-hand side is at most 
 \begin{align*}
 \lev f_\varepsilon \rev_\infty^2 \cdot \lev \alpha^2 \rev_v \cdot \mu \lp \lbr \alpha \geqslant c^{-1} L \rbr \rp^{1/w} \ll_{f_\varepsilon, \, p}  \lev \alpha \rev_{2v}^2 \lp c^{-1} L \rp^{-p/w}
\end{align*} 
for any $p \leqslant d$. In order to maximize $p/w$, we take $p = d$ and $v = d/2$ where the last choice ensures that $\lev \alpha \rev_{2v} < \infty$. All in all, this yields
 \begin{align*}
\lev \widehat{f_\varepsilon} \lp 1 - \eta_L \rp \rev_2^2 \ll_{f_\varepsilon} L^{-(d-2)},
\end{align*} 
and the lemma follows.
\end{proof}
Consider
 \begin{align*}
G_N^{(\epsilon, L)} - G_N = \frac{1}{\sqrt{N}} \lp \sum_{m = 0}^{N-1}   \lp \widehat{f}_\epsilon \eta_L - \widehat{\chi}_2 \rp \circ b^m  - N \cdot \mu \lp \widehat{f}_\epsilon \eta_L - \widehat{\chi}_2 \rp \rp.
\end{align*} 
Applying the triangle inequality and reusing the estimate of $\lv \widehat{f}_\epsilon \eta_L - \widehat{\chi}_2 \rv_1$ from the proof of Lemma \ref{smoothapprox}, we find that
 \begin{align*}
\lv G_N^{(\epsilon, L)} - G_N \rv_2 &\leqslant \sqrt{N} \lp \lv  \widehat{f}_\epsilon \eta_L - \widehat{\chi}_2 \rv_2 +  \lv \widehat{f}_\epsilon \eta_L - \widehat{\chi}_2 \rv_1 \rp 
\\ &\leqslant  \sqrt{N} \lp \lv  \widehat{f}_\epsilon \lp 1 - \eta_L \rp \rv_2 + \lv \widehat{f}_\epsilon - \widehat{\chi}_2 \rv_2   +  L^{-d/2} + \epsilon \rp.
\end{align*} 
Applying Lemma \ref{BGeq415} and Lemma \ref{siegelestnorm} to the right-hand side, we find that
 \begin{align*}
\lv G_N^{(\epsilon, L)} - G_N \rv_2 &\ll_{f_\varepsilon} \sqrt{N} \lp L^{-(d - 2)/2} + \lv f_\epsilon - \chi_2 \rv_1 + \lv f_\epsilon - \chi_2 \rv_2 + L^{-d/2} + \epsilon \rp
\\ 
&\ll \sqrt{N} \lp L^{-(d-2)/2} + \sqrt{\epsilon} \rp,
\end{align*} 
where we used (\ref{feps01}) and (\ref{feps02}) in the last step. We conclude that (\ref{aux02}) holds provided that $L$ and $\epsilon$ satisfy the conditions
 \begin{align}\label{conditionsarechanging}
\epsilon = o \lp N^{-1} \rp, \quad \quad N = o \lp L^{d-2} \rp.
\end{align}  
Note that these conditions imply that (\ref{conditionsonepsandl}) is satisfied. We postpone the matter of choosing $L$ and $\varepsilon$ since other conditions will have to be taken into account, too, in order to ensure (\ref{GN-cumtozero}). In conclusion, aside from ensuring (\ref{conditionsarechanging}), we have proved (\ref{aux02}). Along with (\ref{aux01}), this proves (\ref{GN-finitevariance}). 
\subsubsection{Initial Estimates of the Cumulants}
We now begin the proof of (\ref{GN-cumtozero}). The proof of Theorem \ref{cltforsmoothfcts} shows that with $\phi = \widehat{f_\varepsilon} \eta_L$, we have the estimate
 \begin{align*}
\nonumber \cum_r \lp G_N^{(\epsilon, L)} \rp &\ll_{q,r}  N^{1-r/2} \beta_{r }^{r-1} \lv \phi \rv_\infty^r + \max \lbr 1, S_{q } \lp \phi \rp^r \rbr N^{r/2} \sum_{j = 0}^r e^{- \lp \delta' \beta_{j+1} - q \xi \alpha_j r \rp }.
\end{align*} 
However, we will need a refinement of the estimate (\ref{cumest}) in order to improve the factor $\lv \phi \rv_\infty^r$ in the estimate above. To do so, we adapt the result \cite[Eq. (5.17)]{BG2} to the current situation.
\begin{lemma}\label{degnenihegnet}
For $n = 1, 2, \ldots, r$, let $\psi_{n} = \phi \circ b^n - \mu \lp \phi \rp$. We then have
 \begin{align*}
\cum_r \lp \psi_{m_1}, \ldots, \psi_{m_r} \rp \ll_{r, d, \mathrm{supp} \, f_\varepsilon} \lv \phi \rv_\infty^{(r-d)^+},
\end{align*} 
where $(r-d)^+ = \max \lbr 0, r-d \rbr$. 
\end{lemma}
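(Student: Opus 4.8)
The plan is to expand $\cum_r(\psi_{m_1},\ldots,\psi_{m_r})$ over set partitions as in Definition \ref{cumdef}, apply the triangle inequality, and bound each factor $\int_\Y\prod_{i\in I}|\psi_{m_i}|\,\dd\mu$; the one place where we must improve on the crude estimate (\ref{cumest}) is that $\phi=\widehat{f_\varepsilon}\eta_L$ lies in $L^d(\Y)$, not merely in $L^\infty(\Y)$. Indeed, since $0\leqslant\eta_L\leqslant1$ and $\lv f_\varepsilon\rv_\infty\leqslant1$, Proposition \ref{schmidt-alpha} gives $|\phi|\leqslant|\widehat{f_\varepsilon}|\ll_{\mathrm{supp}\,f_\varepsilon}\alpha$ pointwise on $\Y$, and Theorem \ref{aaalphaint} with $p=d$ then yields $\lv\phi\rv_{L^d(\Y)}\ll_{\mathrm{supp}\,f_\varepsilon}\lv\alpha\rv_{L^d(\Y)}\ll_d1$; the implied constant is uniform in $\varepsilon$ because $\lbr\mathrm{supp}\,f_\varepsilon\rbr_\varepsilon$ is uniformly bounded. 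Moreover, since $b$ acts on $\Y$ measure-preservingly, $\lv\psi_n\rv_p=\lv\phi-\mu(\phi)\rv_p$ is independent of $n$ for every $p\in[1,\infty]$; denoting this common value by $\lv\psi\rv_p$, we have $\lv\psi\rv_p\leqslant2\lv\phi\rv_p$, so in particular $\lv\psi\rv_d\ll_{d,\mathrm{supp}\,f_\varepsilon}1$ and $\lv\psi\rv_\infty\leqslant2\lv\phi\rv_\infty$.

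Assume first that $\lv\phi\rv_\infty\geqslant1$. Fix a partition $\mathcal P$ of $\lbr1,\ldots,r\rbr$ and a block $I\in\mathcal P$ with $s:=\#I$. Bounding $(s-d)^+$ of the factors $|\psi_{m_i}|$ by $\lv\psi\rv_\infty$ and applying Hölder's inequality with exponent $\min\lbr s,d\rbr$ to the remaining $\min\lbr s,d\rbr$ factors, and using $\lv\psi\rv_{\min\lbr s,d\rbr}\leqslant\lv\psi\rv_d$ (as $\mu$ is a probability measure and $\min\lbr s,d\rbr\leqslant d$), we obtain
\[
\int_\Y\prod_{i\in I}|\psi_{m_i}|\,\dd\mu\leqslant\lv\psi\rv_\infty^{(s-d)^+}\,\lv\psi\rv_d^{\min\lbr s,d\rbr}\ll_{r,d,\mathrm{supp}\,f_\varepsilon}\lv\phi\rv_\infty^{(s-d)^+}.
\]
Multiplying over the at most $r$ blocks of $\mathcal P$ and summing over the boundedly many partitions (with $(\#\mathcal P-1)!\leqslant r!$), the triangle inequality applied to the cumulant expansion gives
\[
\bigl|\cum_r(\psi_{m_1},\ldots,\psi_{m_r})\bigr|\ll_{r,d,\mathrm{supp}\,f_\varepsilon}\max_{\mathcal P}\lv\phi\rv_\infty^{\,\sum_{I\in\mathcal P}(\#I-d)^+}\leqslant\lv\phi\rv_\infty^{(r-d)^+},
\]
where the last inequality is the superadditivity of the convex function $t\mapsto(t-d)^+$ on $[0,\infty)$, together with $\sum_{I\in\mathcal P}\#I=r$. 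If instead $\lv\phi\rv_\infty<1$, the crude bound $\int_\Y\prod_{i\in I}|\psi_{m_i}|\,\dd\mu\leqslant\lv\psi\rv_\infty^{\#I}\leqslant(2\lv\phi\rv_\infty)^{\#I}$ multiplies over the blocks of any partition to $\leqslant2^r\lv\phi\rv_\infty^{\,r}\leqslant2^r\lv\phi\rv_\infty^{(r-d)^+}$, and summing over partitions again yields the claim.

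I expect no genuine obstacle here. The whole content is replacing the $L^\infty$-estimate of $\int_\Y\prod_{i\in I}|\psi_{m_i}|$ by one that leaves $\min\lbr\#I,d\rbr$ of the factors in $L^d$, which is precisely where the sharp integrability of $\alpha$ from Theorem \ref{aaalphaint} — including the endpoint $p=d$ — is needed; the remaining ingredients (the combinatorial expansion of the cumulant, Hölder's inequality, and the superadditivity of $t\mapsto(t-d)^+$) are routine. The only mild subtleties are the uniformity in $\varepsilon$ of the constant coming from Proposition \ref{schmidt-alpha} and the case distinction according to whether $\lv\phi\rv_\infty$ is at least $1$.
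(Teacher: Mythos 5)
Your proof is correct and follows essentially the same route as the paper's: expand the cumulant over partitions, and for each block $I$ bound $\int_\Y \prod_{i\in I}|\psi_{m_i}|\,\dd\mu$ by placing up to $d$ of the factors in $L^d(\Y)$ (via Proposition \ref{schmidt-alpha} and the endpoint $p=d$ of Theorem \ref{aaalphaint}) and the remaining $(\#I - d)^+$ in $L^\infty$, then combine. Your Hölder-with-exponent-$\min\{\#I,d\}$ plus Lyapunov is a trivial variant of the paper's padding with constant functions, and you merely make explicit the superadditivity of $t\mapsto(t-d)^+$ and the benign $\lv\phi\rv_\infty<1$ edge case, both of which the paper leaves implicit.
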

\begin{proof}
By definition of the cumulant $\cum_r \lp \psi_{m_1}, \ldots, \psi_{m_r} \rp$ it is enough to show that for any of the possible values of $n$,
 \begin{align}\label{genholder01}
\int_{\Y}  \left|  \psi_{m_1} \cdots\psi_{m_n} \right| \hsp \dd \mu \ll_{n, d, \mathrm{supp} \, f_\varepsilon} \lv \phi \rv_\infty^{(n-d)^+}.
\end{align} 
Suppose first that $n \leqslant d$. Applying the generalized Hölder inequality to the $d$ functions $\psi_{m_1}, \ldots, \psi_{m_n}, 1, \ldots, 1$, we find that
 \begin{align}\label{genholder02}
\int_{\Y} \left| \psi_{m_1} \cdots \psi_{m_n} \right| \hsp \dd \mu \leqslant \lv \psi_{m_1} \rv_d \cdots \lv \psi_{m_n} \rv_d \ll_{d, \mathrm{supp} \, f_\varepsilon} 1.
\end{align} 
Indeed, for any $m$ we have
 \begin{align*}
\lv \psi_m \rv_d \leqslant \mu (\phi) + \lv \widehat{f_\varepsilon} \rv_d \ll_{d, \mathrm{supp} \, f_\varepsilon } \mu(\phi) + \lv \alpha \rv_d
\end{align*} 
by the $G$-invariance of $\mu$, Proposition \ref{schmidt-alpha}, and 
 \begin{align*}
\mu (\phi) \leqslant \lv f_\varepsilon \rv_1 \leqslant \lev f_\varepsilon - \chi_2 \rv_1 +  \lv \chi_2 \rv_1 \ll \varepsilon + \mathrm{Vol}(\Omega_2) 
\end{align*} 
by (\ref{feps01}). In combination with Theorem \ref{aaalphaint}, this proves (\ref{genholder02}). \par 
Next, suppose that $n > d$ and consider the $d+1$ functions $\psi_{m_1}, \, \ldots, \psi_{m_d}, \, \psi_{m_{d+1}} \cdots \psi_{m_n}$. Once again applying the generalized Hölder inequality, we obtain
 \begin{align}\label{genholder03}
\nonumber \int_{\Y} \left| \psi_{m_1} \cdots \psi_{m_n} \right| \hsp \dd \mu &\leqslant \lv \psi_{m_{d+1}} \cdots \psi_{m_n} \rv_\infty \int_{\Y} \left| \psi_{m_1} \cdots \psi_{m_d} \right| \hsp \dd \mu \\ 
\nonumber &\leqslant 2^{n-d} \lv \phi \rv_\infty^{n-d} \lv \psi_{m_1} \rv_d \cdots \lv \psi_{m_d} \rv_d \\[+0.4em]
\nonumber &\ll_{d, \mathrm{supp} \, f_\varepsilon} 2^{n-d} \lv \phi \rv_\infty^{n-d} \\[+0.4em]
&\ll_{n, d} \lv \phi \rv_\infty^{n-d},
\end{align} 
where we reused the above estimate of $\lv \psi_m \rv_d$. This proves (\ref{genholder01}) and hence the lemma.
\end{proof}
\textsc{Remark.} We stress that the implied constant in Lemma \ref{degnenihegnet} is allowed to depend on the support of $f_\varepsilon$, but does not depend on $L$.\\\par 
Now, since $\eta_L \leqslant 1$ is supported on the set $\lbr \Lambda \in \Y : \alpha (\Lambda) \leqslant c L \rbr$,
 \begin{align*}
\lv \phi \rv_\infty &= \lv \widehat{f_\varepsilon} \eta_L \rv_\infty 
\leqslant \sup_{\Lambda \in \Y} \lbr \left| \widehat{f}_\varepsilon (\Lambda) \right| : \alpha (\Lambda) \leqslant c L \rbr \\ 
&\ll_{\text{supp } f_\epsilon} \lv f_\varepsilon \rv_\infty \sup_{\Lambda \in \Y} \lbr \left| \alpha ( \Lambda ) \right| : \alpha (\Lambda ) \leqslant cL\rbr \ll_{f_\varepsilon, \, c} L,
\end{align*} 
by Proposition \ref{schmidt-alpha}. From this and Lemma \ref{degnenihegnet} we obtain
 \begin{align}\label{genholdertotherescue}
\cum_r \lp G_N^{(\epsilon, L)} \rp &\ll_{q,c, f_\epsilon}  2^{r} P(r) N^{1-r/2} \beta_{r }^{r-1} L^{(r-d)^+} + \max \lbr 1, S_{q } \lp \phi \rp^r \rbr N^{r/2} \sum_{j = 0}^r e^{- \lp \delta' \beta_{j+1} - q \xi \alpha_j r \rp },
\end{align} 
where $2^r P(r)$ denotes (an upper bound for) the function of $r$ appearing in the implied constant of Lemma \ref{degnenihegnet}.
\subsubsection{Estimating the Sobolev Norm $S_q (\phi)$}
We now proceed by estimating $S_{q} \lp \phi \rp$. First of all, we note that since $\phi = \widehat{f}_\epsilon \eta_L$ and $\eta_L (\Lambda)$ vanishes if $\alpha (\Lambda) > cL$, all suprema of $\phi$ and its Lie derivatives may be taken over the set $ \alpha^{-1} \bigl( \lb 0, c L \rb \bigr) \subset \Y$. If we first consider the $L^\infty$-norm, we therefore see that
 \begin{align}\label{infty-estimate}
\nonumber \lv \widehat{f}_\varepsilon \eta_L \rv_\infty &= \sup \lbr \left| \widehat{f}_\epsilon (\Lambda) \eta_L (\Lambda) \right| : \Lambda \in \Y, \, \alpha (\Lambda) \leqslant cL \rbr 
\\
\nonumber &\leqslant \lp \sup_{\alpha \leqslant cL} \widehat{f}_\varepsilon (\Lambda) \rp \lp \sup_{\alpha \leqslant cL} \eta_L (\Lambda) \rp \\
\nonumber &\ll_{c, \text{supp } f_\epsilon} \lv f_\varepsilon \rv_\infty \sup_{\alpha \leqslant cL} \alpha (\Lambda)
\\
&\ll_c L. 
\end{align} 
\par Next, let us consider the $C^q$-norm. If $Z \in \mathcal{U}(\mathfrak{g})$ is any monomial of degree at most $q$, then as before, since $\mathcal{D}_Z$ is a derivation on $C_c^\infty ( \Y )$, we find that 
 \begin{align}\label{derivation-sum}
\mathcal{D}_Z \lp \widehat{f_\varepsilon} \eta_L \rp = \sum_{Z', Z''} \mathcal{D}_{Z'} \lp \widehat{f_\varepsilon} \rp \mathcal{D}_{Z''} \lp \eta_L \rp
\end{align} 
where the sum extends over all monomials $Z', Z'' \in \mathcal{U}(\mathfrak{g})$ of degree at most $q$ and satisfying $\deg Z' + \deg Z'' = \deg Z$. We note that, although $\widehat{f_\varepsilon}$ is not a compactly supported function on $\Y$, the symbol $\mathcal{D}_{Z'} \lp \widehat{f_\varepsilon} \rp$ still carries meaning. Indeed, if we write $\Lambda = g \Gamma$, then for $Z'$ of degree $1$,
 \begin{align*}
\lp \mathcal{D}_{Z'} \widehat{f_\varepsilon} \rp \lp \Lambda \rp 
&= \lim_{t \rightarrow 0} \hsp t^{-1} \lp \widehat{f_\varepsilon}  \lp e^{tZ'} g \Gamma \rp - \widehat{f_\varepsilon} \lp g \Gamma \rp \rp = \lim_{t \rightarrow 0} \hsp \sumprime_{\textbf{v} \in g \Z^{2d}} t^{-1} \lp f_\varepsilon \lp e^{tZ'}\textbf{v} \rp - f_\varepsilon (\textbf{v}) \rp.
\end{align*} 
Since the sum above is finite, we obtain
 \begin{align}\label{siegelcommuteswithdiff}
\lp \mathcal{D}_{Z'} \widehat{f_\varepsilon} \rp \lp \Lambda \rp = \sumprime_{\textbf{v} \in g \Z^{2d}} \lp \mathcal{D}_{Z'} f_\varepsilon \rp \lp \textbf{v} \rp = \lp \widehat{\mathcal{D}_{Z'} f_\varepsilon} \rp \lp \Lambda \rp,
\end{align} 
where $\mathcal{D}_{Z'}$ now also represents the differential operator on $C_c^\infty \lp \R^{2d} \rp$ given by
 \begin{align*}
\lp \mathcal{D}_{Z'} f \rp ( \textbf{x} ) = \left. \frac{\partial}{\partial t} \right|_{t = 0} \lp f \lp e^{tZ'} \textbf{x} \rp - f (\textbf{x}) \rp, \quad \quad f \in C_c^\infty \lp \R^{2d} \rp, \quad \quad \textbf{x} \in \R^{2d}.
\end{align*} 
The case of a monomial $Z'$ of arbitrary degree follows easily by induction. It now follows from (\ref{siegelcommuteswithdiff}), Proposition \ref{schmidt-alpha}, and the properties of $f_\epsilon$ that we have
 \begin{align}\label{kindofblue}
\sup_{\alpha \leqslant cL} \left| \lp \mathcal{D}_{Z'} \widehat{f}_\epsilon \rp (\Lambda) \right| \ll_{\text{supp } f_\epsilon} \lv \mathcal{D}_{Z'} f_\varepsilon \rv_\infty \sup_{\alpha \leqslant cL} \alpha ( \Lambda) \ll_c \epsilon^{-q} L,
\end{align} 
since $\deg Z' \leqslant \deg Z \leqslant q$. Moreover, by (\ref{etadef}) and the $G$-invariance of $\mu$, we have
 \begin{align*}
\mathcal{D}_{Z''} \lp \eta_L \rp = \mathcal{D}_{Z''} \lp \Xi * \mathbbm{1}_{\lbr \alpha \leqslant L \rbr} \rp = \lp \mathcal{D}_{Z''} \Xi  \rp * \mathbbm{1}_{\lbr \alpha \leqslant L \rbr},
\end{align*} 
and hence
 \begin{align*}
\lv \mathcal{D}_{Z''} \lp \eta_L \rp \rv_\infty \leqslant \lv \mathcal{D}_{Z''} \Xi \rv_1 \ll_q 1.
\end{align*} 
From this, (\ref{kindofblue}), and (\ref{derivation-sum}) we now conclude that
 \begin{align*}
\lv \mathcal{D}_Z \lp \phi \rp \rv_{\infty} = \sup_{\alpha \leqslant cL} \left| \mathcal{D}_{Z} \lp \widehat{f}_\epsilon \eta_L \rp \right| \ll_{c,q, \text{supp } f_\epsilon} \epsilon^{-q}L,
\end{align*} 
and by taking the supremum of the left-hand side over all monomials $Z$ of degree at most $q$, we establish the bound
 \begin{align*}
\lv \phi \rv_{C^q} \ll_{c,q,\text{supp } f_\epsilon} \varepsilon^{-q} L.
\end{align*} 
Along with (\ref{infty-estimate}), this proves that
 \begin{align}\label{sq-est-lava}
S_q \lp \widehat{f_\epsilon} \eta_L \rp = S_q \lp \phi \rp \ll_{c,q} \epsilon^{-q} L.
\end{align} 
We note that the implied constant is indeed independent of $f_\epsilon$ since the family $\lbr \mathrm{supp} \, f_\epsilon \rbr_\epsilon$ is uniformly bounded.
\subsubsection{Optimizing the Parameters $\epsilon$ and $L$}
By (\ref{genholdertotherescue}) and the estimate (\ref{sq-est-lava}), we obtain 
 \begin{align*}
\cum_r \lp G_N^{(\epsilon, L)} \rp &\ll_{q,c, f_\varepsilon} 2^r P(r) N^{1-r/2} \beta_{r }^{r-1} L^{(r-d)^+} + \varepsilon^{-qr} L^r N^{r/2} \sum_{j = 0}^r e^{- \lp \delta' \beta_{j+1} - q \xi \alpha_j r \rp } \\
&\ll  2^r P(r) N^{1-r/2} \gamma^{r-1} L^{(r-d)^+} + \varepsilon^{-qr} L^r N^{r/2} e^{-\delta ' \gamma}
\end{align*} 
where the last inequality follows by choosing the sequence $\lbr \beta_i \rbr$ as in (\ref{choiceofbeta}). Since $r$ is arbitrary and thus constant, all that remains in order to prove Theorem \ref{maintheorem} for $T = 2^N$ is therefore to choose the parameters $\varepsilon$ and $L$ so that (\ref{conditionsarechanging}) is satisfied and so that 
 \begin{align}
N^{1-r/2} \gamma^{r-1} L^{(r-d)^+} &\longrightarrow 0, \label{conv01}\\
\varepsilon^{-qr} L^r N^{r/2} e^{-\delta ' \gamma} &\longrightarrow 0. \label{conv02}
\end{align} 
As in \textsc{Section 4}, we will also here take $\gamma$ to be some multiple $C_\gamma \log N$ of $\log N$. We can then assume that $r-d > 0$, since otherwise (\ref{conv01}) follows immediately as $r \geqslant 3$. Then, if we take $L = N^j$ for some real number $j$ to be determined later, we see that (\ref{conv01}) is satisfied provided that $1 - r/2 + j (r-d) < 0$. That is, $j$ should satisfy
 \begin{align*}
j < \frac{r-2}{2(r - d )}.
\end{align*} 
In order for (\ref{conditionsarechanging}) to be satisfied, we also need that $( d-2 ) j > 1$. In summary, we can find such a $j$ if and only if
 \begin{align}\label{apr22-ineq}
\frac{1}{d-2} < \frac{r-2}{2(r-d)}.
\end{align} 
This is equivalent to $r(d-4) + 4 > 0$, which is true since $d \geqslant 4$. Hence, we may find a suitable $j$ such that (\ref{conv01}) is satisfied with $L = N^j$. \par 
It remains to choose the constant $C_\gamma = \gamma / \log N$ and $\varepsilon$. Taking $\varepsilon = N^{-3/2}$, we see that (\ref{conv02}) is satisfied if 
 \begin{align*}
r \lp \frac{3q + 1}{2} + j \rp - \delta' C_\gamma < 0,
\end{align*} 
which is true if 
 \begin{align*}
C_\gamma > \frac{r}{\delta'} \lp \frac{3q + 1}{2} + j \rp.
\end{align*} 
Obviously such a choice of $C_\gamma$ is possible. Therefore, with the given choices of $L$ and $\epsilon$, also (\ref{conv02}) is satisfied. Since (\ref{conv01}) and (\ref{conv02}) are now satisfied, we conclude that (\ref{GN-cumtozero}) is satisfied for all $r \geqslant 3$. Together with Lemma \ref{smoothapprox} (in particular (\ref{hbo-2})), this now proves Theorem \ref{maintheorem} in the case $T = 2^N$.
\subsection{Reduction of Theorem \ref{maintheorem} to the Special Case $T = 2^N$}
We conclude by showing that, in fact, the special case $T = 2^N$ of Theorem \ref{maintheorem} implies the theorem in its full generality. For the real parameter $T$, we let $N = N(T) = \lfloor \log_2 T \rfloor$ so that
 \begin{align}\label{TvsNT}
\frac{T}{2} < 2^{N} \leqslant T < 2^{N + 1}.
\end{align} 
If $\Lambda \in \Y$ denotes any symplectic lattice, we then have
 \begin{align*}
\# \lp \Lambda \cap \Omega_T \rp - \vol \lp \Omega_T \rp &= \# \lp \Lambda \cap \Omega_{2^N} \rp - \vol \lp \Omega_{2^N} \rp + \# \lp \Lambda \cap \lp \Omega_T \setminus \Omega_{2^N} \rp \rp - \vol \lp \Omega_T \setminus \Omega_{2^N} \rp,
\end{align*} 
and hence
 \begin{align*}
X_T := \frac{\# \lp \Lambda \cap \Omega_T \rp - \vol \lp \Omega_T \rp}{\vol \lp \Omega_T \rp^{1/2}} 
&= \alpha_T Z_T + \beta_T - \gamma_T,
\end{align*} 
where we write
 \begin{align*}
\alpha_T := \lp \frac{\vol \lp \Omega_{2^N} \rp  }{\vol \lp \Omega_T \rp} \rp^{1/2}, \quad \beta_T &:= \frac{\# \lp \Lambda \cap \lp \Omega_T \setminus \Omega_{2^N} \rp \rp }{\vol \lp \Omega_T \rp^{1/2}}, \quad \gamma_T := \lv \beta_T \rv_1,
\end{align*} 
and 
 \begin{align*}
Z_T := \frac{\# \lp \Lambda \cap \Omega_{2^N} \rp - \vol \lp \Omega_{2^N} \rp}{\vol \lp \Omega_{2^N} \rp^{1/2}}.
\end{align*} 
So far, we know that as $T \longrightarrow \infty$, $Z_T$ converges in distribution to $N\lp 0, \sigma^2 \rp$ for some $\sigma \geqslant 0$. It will therefore follow by standard methods that $X_T \Longrightarrow N\lp 0, \sigma^2 \rp$ if we can show that 
 \begin{align}
\alpha_T &\longrightarrow 1, \label{alphaconvergence}\\
\gamma_T &\longrightarrow 0. \label{gammaconvergence}
\end{align} 
To this end, we now compute the volume of $\Omega_T$. By changing to spherical coordinates, we see that $\Omega_T$ is the set
 \begin{align*}
\lbr \lp r, \phi_1, \ldots, \phi_{d-1}; \, s, \theta_1, \ldots, \theta_{d-1} \rp \in \lp \R_+ \times \lb 0, \pi \rb^{d-2} \times \lb 0, 2 \pi \rp \rp^2 : 1/s \leqslant r \leqslant 2/s, \, 1 \leqslant s < T \rbr.
\end{align*} 
Writing $r^{d-1} F\lp \phi_1, \ldots, \phi_{d-1} \rp \,  \dd \phi_1 \cdots \dd \phi_{d-1}  \dd r$ for the spherical volume element on $\R^d$, we see that with
 \begin{align*}
c_d = \int_0^{2 \pi} \int_0^\pi \cdots \int_0^\pi F\lp \phi_1, \ldots, \phi_{d-1} \rp \hsp \hsp \dd \phi_1 \cdots \dd \phi_{d-1},
\end{align*} 
the volume of $\Omega_T$ is
 \begin{align*}
\vol \lp \Omega_T \rp  = c_d^2 \int_{1}^T \int_{1/s}^{2/s} r^{d-1} s^{d-1} \hsp \dd r \hsp \dd s = \frac{c_d^2 (2^d - 1)}{d} \log T.
\end{align*} 
\par By taking the logarithm in (\ref{TvsNT}) and dividing by $\log T$, we get that
 \begin{align}\label{whatawonderfulworld}
1 - \frac{\log 2}{\log T} < \frac{N \log 2}{\log T}  < 1 + \frac{\log 2}{\log T},
\end{align} 
and hence
 \begin{align*}
 \lp 1 - \frac{\log 2}{\log T} \rp^{1/2} < \alpha_T = \lp \frac{N \log 2}{\log T} \rp^{1/2} < \lp 1 + \frac{\log 2}{\log T} \rp^{1/2},
\end{align*} 
which proves (\ref{alphaconvergence}). \par 
As for $\gamma_T$, we see that (\ref{TvsNT}) implies
 \begin{align*}
0 \leqslant \log T - N \log 2 < \log 2,
\end{align*} 
and it follows that 
 \begin{align*}
\gamma_T = \lp \frac{c_d^2 (2^d - 1)}{d} \rp^{1/2} \frac{\log T - N \log 2}{\lp \log T \rp^{1/2}} \ll_d \frac{\log 2}{\lp \log T \rp^{1/2}}  \longrightarrow 0.
\end{align*} 
This completes the proof of (\ref{gammaconvergence}). Since now both (\ref{alphaconvergence}) and (\ref{gammaconvergence}) hold, the proof of Theorem \ref{maintheorem} is complete.
\begin{appendices}
\section{A Siegel Set for $G / \Gamma$}\label{siegelappendix}
We now give a proof of the explicit Siegel set decomposition expressed in Proposition \ref{siegeldroid}. We follow the arguments in \cite[Chap. V.1]{bekkamayer}.
\begin{lemma}\label{tooktoolong}
There exists a number $m(d) \geqslant 2$ with the property that ${\bf N} = {\bf N}_{m(d)}  {\bf N}_{\Z}$.
\end{lemma}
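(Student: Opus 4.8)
The plan is to show that $\textbf{N}_{\Z}$ is a cocompact lattice in $\textbf{N}$, and then to absorb a compact fundamental set for $\textbf{N}/\textbf{N}_{\Z}$ into the bounded box $\textbf{N}_u$ by taking $u$ large. First I would record the polynomial coordinates on $\textbf{N}$ that are already implicit in the proof of Lemma~\ref{haarmeasure}: an element $g = \left(\begin{smallmatrix} N & M \\ 0 & N^{-\intercal}\end{smallmatrix}\right) \in \textbf{N}$ is determined by the $\tfrac{d(d-1)}{2}$ strictly-upper entries $n_{ij}$ ($i<j$) of the unipotent matrix $N$ together with the $\tfrac{d(d+1)}{2}$ entries $m_{ij}$ of $M$ with $i \geqslant j$, the remaining entries of $M$ being integer-coefficient polynomials in these forced by the relation $NM^\intercal = MN^\intercal$. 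In these coordinates $\textbf{N}$ is identified with $\R^{d^2}$, the group multiplication is polynomial with integer coefficients, $\textbf{N}$ is a simply connected nilpotent Lie group defined over $\textbf{Q}$, and $\textbf{N}_{\Z} = \mathrm{GL}(2d,\Z)\cap\textbf{N}$ is precisely the set of integer points.

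Next I would prove cocompactness of $\textbf{N}_{\Z}$ in $\textbf{N}$ following \cite[Chap.~V.1]{bekkamayer} (see also \cite{morris}). One runs the short induction on the descending central series of $\textbf{N}$: let $\textbf{C}\cong\R$ be its last nontrivial term, a central one-parameter subgroup corresponding to a single corner coordinate, so that $\textbf{C}\cap\textbf{N}_{\Z}\cong\Z$. The quotient $\textbf{N}/\textbf{C}$ is again a simply connected nilpotent group defined over $\textbf{Q}$, and $\textbf{N}_{\Z}$ surjects onto its integer points, the relevant coordinate projection being integral and surjective because the chosen coordinates are adapted to the central series. By the inductive hypothesis $\textbf{N}/\textbf{C} = \overline{\mathcal{K}}\cdot(\textbf{N}_{\Z}\textbf{C}/\textbf{C})$ for some compact $\overline{\mathcal{K}}$; lifting $\overline{\mathcal{K}}$ to a compact section $\mathcal{K}_0\subset\textbf{N}$ gives $\textbf{N} = \mathcal{K}_0\,\textbf{C}\,\textbf{N}_{\Z}$, and writing $\textbf{C} = \mathcal{K}_{\textbf{C}}(\textbf{C}\cap\textbf{N}_{\Z})$ with $\mathcal{K}_{\textbf{C}}$ a bounded interval and using that $\textbf{C}$ is central we obtain $\textbf{N} = \mathcal{K}\,\textbf{N}_{\Z}$ with $\mathcal{K} := \mathcal{K}_0\mathcal{K}_{\textbf{C}}$ compact. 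Equivalently, and even more explicitly, one may produce a bounded fundamental set by right-multiplying a given $g\in\textbf{N}$ by the integral elementary symplectic unipotents so as to bring each coordinate into $[-\tfrac12,\tfrac12)$, processing the coordinates in the order dictated by the central series so that each adjustment disturbs only coordinates that have not yet been, or need not be, reduced.

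Finally, since $\mathcal{K}$ is compact and $A\mapsto\Vert A\Vert_\infty$ is continuous, $M_0 := \sup_{A\in\mathcal{K}}\Vert A\Vert_\infty < \infty$. Choosing any integer $m(d)\geqslant\max\{2, M_0\}$ — which depends only on $d$, since $\textbf{N} = \textbf{N}(d)$ and the fundamental set $\mathcal{K}$ depend only on $d$ — we get $\mathcal{K}\subseteq\textbf{N}_{m(d)}$ and hence $\textbf{N} = \mathcal{K}\,\textbf{N}_{\Z}\subseteq\textbf{N}_{m(d)}\textbf{N}_{\Z}\subseteq\textbf{N}$, which forces $\textbf{N} = \textbf{N}_{m(d)}\textbf{N}_{\Z}$, as claimed. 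The main obstacle is the cocompactness step: one must check carefully that $\textbf{N}_{\Z}$ is genuinely a lattice in $\textbf{N}$ and that it surjects onto the integer points of each successive quotient in the central series, i.e. that the rational structure behaves well under these quotients — which is exactly where the explicit symplectic coordinates on $\textbf{N}$ introduced in the first step do the work. Once a bounded fundamental set is available, enlarging it to $\textbf{N}_{m(d)}$ is immediate.
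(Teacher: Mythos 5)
Your proposal is correct, but takes a genuinely different route from the paper. The paper's proof is a direct, hands-on induction on $d$: it peels off the first row and column of the unipotent block $N$, writing $N$ with a $(d-1)\times(d-1)$ block $A$ in the corner, applies the inductive hypothesis for $\SP(2d-2)$ to control $A$ by an integer $S_0$, chooses an integer vector $\textbf{s}$ so that $A\textbf{s}+\textbf{x}$ lands in a bounded box, and then handles the $M$-block by observing that $(N')^{-1}M'$ is symmetric and rounding it to a nearby integer symmetric matrix $-T''$; this produces an explicit recursion $m(1)=2$, $m(d)=2d(d-1)m(d-1)$. You instead appeal to the general theory of arithmetic subgroups of simply connected nilpotent $\mathbf{Q}$-groups, inducting on the descending central series of $\textbf{N}$ to produce a compact fundamental set $\mathcal{K}$ for $\textbf{N}_{\Z}$, and then enlarge to $\textbf{N}_{m(d)}\supseteq\mathcal{K}$. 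Both arguments are sound. Yours is more conceptual and would apply verbatim to any unipotent $\mathbf{Q}$-group together with its integer points; the price is that it is non-constructive for $m(d)$ unless you track bounds through the central-series reduction, and, as you correctly flag, it hinges on verifying that the rational structure is adapted to the central series and that $\textbf{N}_{\Z}$ surjects onto the integer points of each successive quotient -- exactly the kind of bookkeeping the paper's concrete construction sidesteps. One suggestion to tighten that verification: take as free coordinates the strictly upper entries $n_{ij}$ of $N$ together with the entries of the symmetric matrix $P := N M^\intercal$, rather than the lower-triangular entries of $M$. Then $M = P N^{-\intercal}$, the group law is manifestly polynomial with integer coefficients (since $N^{-1}$ is an integer polynomial in $N$ by nilpotency of $N-I$), and $\textbf{N}_{\Z}$ is exactly the set of points with all $n_{ij}, p_{ij} \in \Z$ -- which avoids having to argue that the constrained upper entries of $M$ are integer-coefficient polynomials in the lower ones.
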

\begin{proof}
We proceed by induction in $d$. The case $d = 1$ is well-known (see \cite{bekkamayer}) since $\SP(2, \R) = \SL(2, \R)$, and in this case we can take $m(1) = 2$. 
\par We now let $d \geqslant 2$ and assume the claim for $2(d-1) \times 2(d-1)$ matrices in ${\bf N}(d-1)$ and define $m(d)$ recursively. Given any $\lp \begin{smallmatrix}
N & M \\
0 & N^{-\intercal} \end{smallmatrix} \rp \in {\bf N}(d)$, we have to prove the existence of a $d \times d$ unipotent, upper-triangular, integer matrix $S$ and a $d \times d$ integer matrix $T$ such that 
 \begin{align}
\left\Vert NS \right \Vert_\infty &\leqslant m(d), \label{show01}
\\ \lev \lp NS \rp^{-1} \rev_\infty &\leqslant m(d), \label{show02} 
\\ \left \Vert NT + MS^{-\intercal} \right \Vert_\infty &\leqslant m(d), \label{show03} 
\\ ST^\intercal &= TS^\intercal, \label{show04}
\end{align} 
for some $m(d)$ only depending on $d$. \par 
Let us write $N$ in the form
 \begin{align*}
N = \lp \begin{matrix}
A & \textbf{x} \\
0 & 1
\end{matrix} \rp,
\end{align*} 
where $A$ is a $(d-1) \times (d-1)$ unipotent upper-triangular matrix and $\textbf{x} \in \R^{d-1}$. Then we have
 \begin{align*}
N^{-1} = \lp \begin{matrix}
A^{-1} & -A^{-1} \textbf{x} \\
0 & 1
\end{matrix} \rp. 
\end{align*} 
Applying the induction hypothesis to the matrix $\lp \begin{smallmatrix} A & 0 \\ 0 & A^{-\intercal} \end{smallmatrix} \rp$, we can find a unipotent, upper-triangular integer matrix $S_0$ of dimensions $(d-1) \times (d-1)$ such that $\Vert A S_0 \Vert_\infty \leqslant m(d-1)$ and $\Vert (AS_0)^{-1} \Vert_\infty \leqslant m(d-1)$. \par We now augment $S_0$ to a matrix $S$ satisfying (\ref{show01}) and (\ref{show02}) as follows. Write first
 \begin{align*}
S = \lp \begin{matrix}
S_0 & \textbf{s} \\
0 & 1
\end{matrix} \rp
\end{align*} 
where $\textbf{s} \in \Z^{d-1}$. Then one has
 \begin{align*}
NS &= \lp \begin{matrix}
AS_0 & A\textbf{s} + \textbf{x} \\
0 & 1
\end{matrix} \rp, \\ (NS)^{-1} &= \lp \begin{matrix}
( AS_0  )^{-1} & -( AS_0  )^{-1}(A \textbf{s} + \textbf{x}) \\
0 & 1
\end{matrix} \rp.
\end{align*} 
It is now possible to choose the vector $\textbf{s}$ in such a way that $\Vert A \textbf{s} + \textbf{x} \Vert_\infty \leqslant 2$. Indeed, this claim is equivalent to the existence of a point of the lattice $A \Z^{d-1}$ in the set $-\textbf{x} + \lb -2,2 \rb^{d-1}$. This claim, on the other hand, is true since surely $-\textbf{x} + \lb -2,2 \rb^{d-1}$ contains one of the cubes 
 \begin{align*}
\lbr \textbf{v} + \lb 0,1 \rb^{d-1} : \textbf{v} \in \Z^{d-1} \rbr,
\end{align*} 
each of which necessarily contains a lattice point from $A \Z^{d-1}$, as may be seen by solving the system of inequalities
 \begin{align*}
A \textbf{s} = \lp \begin{matrix}
1 & a_{12} & a_{13} & \cdots & a_{1,d-1}\\
0 & 1 & a_{23} & \cdots & a_{2,d-1}\\
0 & 0 & 1 & \cdots & a_{3,d-1} \\
\vdots &\vdots &\vdots &\ddots &\vdots\\
0 & 0 & 0 & \cdots & 1
\end{matrix} \rp \lp \begin{matrix}
s_1 \\ s_2 \\ s_3 \\ \vdots \\ s_{d-1}
\end{matrix} \rp \in \textbf{v} + \lb 0, 1 \rb^{d-1}
\end{align*} 
by starting with $s_{d-1}$. It follows that we now have $\Vert A S_0 \Vert_\infty \leqslant m(d-1)$, $\Vert (AS_0 )^{-1} \Vert_\infty \leqslant m(d-1)$, and $\Vert A \textbf{s} + \textbf{x} \Vert_\infty \leqslant 2$. However, these inequalities imply that additionally
 \begin{align*}
\lev( AS_0  )^{-1}(A \textbf{s} + \textbf{x})\rev_\infty \leqslant 2(d-1) m(d-1).
\end{align*} 
We therefore obtain 
 \begin{align}\label{partial01}
\lev NS \rev_\infty \leqslant \max \lbr m(d-1), 2 \rbr = m(d-1) < 2(d-1) m(d-1),
\end{align} 
since $m(d-1) \geqslant 2$ by assumption, and 
 \begin{align}\label{partial02}
\lev (NS)^{-1} \rev_\infty \leqslant 2(d-1) m(d-1).
\end{align} 
\par
Let $u(d) = 2(d-1) m(d-1)$. With $S$ as above, we now let $T'$ be any integer matrix satisfying $ S (T')^\intercal = T' S^\intercal$ so that 
 \begin{align*}
\lp \begin{matrix}
N & M \\
0 & N^{-\intercal} \end{matrix} \rp 
\lp \begin{matrix}
S & T' \\
0 & S^{-\intercal} \end{matrix} \rp 
=
\lp \begin{matrix}
N' & M' \\
0 & (N')^{-\intercal} \end{matrix} \rp
\end{align*} 
with $\Vert N' \Vert_\infty, \, \Vert (N')^{-\intercal} \Vert_\infty \leqslant u(d)$. Thus, if we can find a symmetric integer matrix $T''$ such that $\Vert N' T'' + M' \Vert_\infty$ is bounded, we will be done. Indeed, then we may take $m(d)$ as an appropriate multiple of $u(d)$ and define $T$ by the relation 
 \begin{align}\label{matrices}
\lp \begin{matrix}
S & T \\
0 & S^{-\intercal} \end{matrix} \rp = 
\lp \begin{matrix}
S & T' \\
0 & S^{-\intercal} \end{matrix} \rp 
\lp \begin{matrix}
I & T'' \\
0 & I \end{matrix} \rp.
\end{align} 
\par In order to find such a matrix $T''$, we note that the condition $N' (M')^\intercal = M' (N')^\intercal$ implies that $(N')^{-1} M'$ is symmetric. Indeed, 
 \begin{align*}
\lp (N')^{-1} M' \rp^\intercal = (M')^\intercal (N')^{- \intercal} = (N')^{-1} M' (N')^{\intercal} (N')^{-\intercal} = (N')^{-1} M'.
\end{align*} 
Therefore we obtain a symmetric integer matrix by letting
 \begin{align*}
T'' = \lp t_{ij} \rp_{i,j = 1}^d, \quad \quad t_{ij} = \left\lfloor \lp -(N')^{-1} M' \rp_{ij} \right\rfloor.
\end{align*} 
For a suitable $d \times d$ matrix $U$ with $\Vert U \Vert_\infty \leqslant 1$, we now have $T'' = -(N')^{-1} M' + U$. This implies that
 \begin{align*}
\lev N' T'' + M' \rev_\infty = \lev N' U \rev_\infty \leqslant d \cdot u(d).  
\end{align*} 
Hence we can take $m(d) = d \cdot u(d)$. By (\ref{partial01}) and (\ref{partial02}), the conditions (\ref{show01})-(\ref{show04}) are satisfied with $S$ and $T$ given by (\ref{matrices}).
\end{proof}
For $M$ a $(d-1) \times (d-1)$ matrix, $a \in \R$, and $\textbf{x}, \textbf{y} \in \R^{d-1}$, let us introduce the $d \times d$ matrix
 \begin{align*}
\mathcal{M}(a, \textbf{x}, \textbf{y}, M ) := \lp \begin{matrix}
a & \textbf{x}^\intercal\\
\textbf{y} & M
\end{matrix} \rp.
\end{align*} 
To complete the induction step and obtain a Siegel set for $\SP (2d, \R) / \SP (2d, \Z)$, we will rely on the following lemma which allows us to project and lift matrices to and from $\SP(2d-2, \R)$.
\begin{lemma}\label{symplecticlemma} For $i = 1,2,3$, let $a_i \in \R$ and let ${\bf x}_i \in \R^{d-1}$. Moreover, for $i = 1, 2, 3, 4$, let $M_i \in \mathrm{GL}(d-1, \R)$. \\ \textit{i)} Suppose that
 \begin{align*}
\lp 
\begin{matrix}
\mathcal{M}(a_1, {\bf x}_1, \0, M_1 ) & \mathcal{M}(a_2, {\bf x}_2, {\bf x}_3, M_2) \\
0 & *
\end{matrix}
\rp \in \SP(2d, \R).
\end{align*} 
Then 
 \begin{align*}
\lp \begin{matrix}
M_1 & M_2 \\
0 & M_1^{-\intercal}
\end{matrix} \rp \in \SP (2d-2, \R).
\end{align*} 
\\ \textit{ii)} Suppose that
 \begin{align*}
\lp \begin{matrix}
M_1 & M_2 \\
M_3 & M_4
\end{matrix} \rp \in \SP (2d-2, \R).
\end{align*} 
Then
 \begin{align*}
\lp 
\begin{matrix}
\mathcal{M}(1,\0, \0, M_1 ) & \mathcal{M}(0, \0, \0, M_2 ) \\
\mathcal{M}(0, \0, \0, M_3 ) & \mathcal{M}(1, \0, \0, M_4 ) 
\end{matrix}
\rp \in \SP(2d, \R).
\end{align*} 
\end{lemma}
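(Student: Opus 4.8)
The plan is to reduce both parts to the defining identity $A^\intercal J A = J$ for symplecticity, exploiting the hidden $1+(d-1)$ block decomposition inside each $d\times d$ block of the matrices in question.

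For part \textit{i)}, I would write the given matrix as $g = \bigl(\begin{smallmatrix} P & Q \\ 0 & R\end{smallmatrix}\bigr)$ in $d\times d$ blocks, with $P = \mathcal{M}(a_1,\textbf{x}_1,\0,M_1)$, $Q = \mathcal{M}(a_2,\textbf{x}_2,\textbf{x}_3,M_2)$, and $R$ the unspecified block. Expanding $g^\intercal J_{2d} g = J_{2d}$ blockwise yields $P^\intercal R = I_d$ — so that $P$ is invertible, hence $a_1 \neq 0$ (as $M_1$ is invertible by hypothesis), and $R = P^{-\intercal}$ — together with $Q^\intercal R - R^\intercal Q = 0$, which combined say precisely that $P^{-1}Q$ is symmetric. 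Since $P$ is block upper-triangular with diagonal blocks $a_1$ and $M_1$, its inverse is again block upper-triangular with diagonal blocks $a_1^{-1}$ and $M_1^{-1}$, so the lower-right $(d-1)\times(d-1)$ block of $P^{-1}Q$ is exactly $M_1^{-1}M_2$. Symmetry of $P^{-1}Q$ therefore forces $M_1^{-1}M_2$ to be symmetric. Finally, for $N := \bigl(\begin{smallmatrix} M_1 & M_2 \\ 0 & M_1^{-\intercal}\end{smallmatrix}\bigr)$ a direct expansion of $N^\intercal J_{2d-2} N$ shows that the relations coming from the zero lower-left block and from the lower-right block being $M_1^{-\intercal}$ hold automatically, and the single remaining relation is exactly the symmetry of $M_1^{-1}M_2$; hence $N \in \SP(2d-2,\R)$.

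For part \textit{ii)}, let $B$ denote the displayed $(2d)\times(2d)$ matrix, viewed as a $2\times2$ array of $d\times d$ blocks, each of which is itself block-diagonal of the form $\diag(\varepsilon, M_i)$ with $\varepsilon\in\{0,1\}$. Expanding $B^\intercal J_{2d} B$ in these blocks, every entry splits into a trivial scalar contribution from the $\{0,1\}$ corners and a $(d-1)\times(d-1)$ contribution assembled from the products $M_i^\intercal M_j$; the hypothesis $\bigl(\begin{smallmatrix} M_1 & M_2 \\ M_3 & M_4\end{smallmatrix}\bigr)^\intercal J_{2d-2}\bigl(\begin{smallmatrix} M_1 & M_2 \\ M_3 & M_4\end{smallmatrix}\bigr) = J_{2d-2}$ supplies exactly the identities $M_1^\intercal M_3 = M_3^\intercal M_1$, $M_2^\intercal M_4 = M_4^\intercal M_2$, and $M_1^\intercal M_4 - M_3^\intercal M_2 = I_{d-1}$ needed to collapse $B^\intercal J_{2d} B$ to $J_{2d}$. (Conceptually, permuting the two coordinates carrying the constant $1$ to the front turns $B$ into $\mathrm{id}_2 \oplus \bigl(\begin{smallmatrix} M_1 & M_2 \\ M_3 & M_4\end{smallmatrix}\bigr)$ and $J_{2d}$ into $J_2 \oplus J_{2d-2}$, but the blockwise check is just as short.)

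Everything here is routine block matrix algebra except for one point in part \textit{i)}: one must extract from ``$g$ symplectic'' precisely the statement that the $(d-1)\times(d-1)$ corner of $P^{-1}Q$ is symmetric — the remaining scalar and vector components of $P^{-1}Q = (P^{-1}Q)^\intercal$, as well as the exact identity of the unknown block $R$, are never used — and one must not overlook the small step that invertibility of $P$, hence $a_1 \neq 0$, is forced by the symplectic relation rather than assumed.
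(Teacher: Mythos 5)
Your proof is correct and follows essentially the same route as the paper: both reduce each part to a blockwise expansion of the defining symplectic relation. The only cosmetic difference in part \textit{i)} is that you work with $g^\intercal J g = J$, extracting $R = P^{-\intercal}$ and the symmetry of $P^{-1}Q$, whereas the paper uses the equivalent condition $PQ^\intercal = QP^\intercal$ (never inverting $P$); both collapse to the same requirement $M_1^{-1}M_2 = (M_1^{-1}M_2)^\intercal$, and part \textit{ii)} is the same direct verification in both.
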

\begin{proof} We first prove claim \textit{i)}. The assumption implies that
 \begin{align*}
\mathcal{M}(a_1, {\bf x}_1, \0, M_1 ) \mathcal{M}(a_2, {\bf x}_2, {\bf x}_3, M_2)^\intercal = \mathcal{M}(a_2, {\bf x}_2, {\bf x}_3, M_2) \mathcal{M}(a_1, {\bf x}_1, \0, M_1 )^\intercal.
\end{align*} 
By computing the transposes and matrix products on both sides, one immediately obtains
 \begin{align*}
\mathcal{M} \lp *, *, *, M_1 M_2^\intercal \rp = \mathcal{M} \lp *,*,*, M_2 M_1^\intercal \rp,
\end{align*} 
and the claim follows. \par We now prove claim \textit{ii)}. The assumption implies that
 \begin{align}\label{symplecticcondition}
M_1^\intercal M_3 = M_3^\intercal M_1, \quad \quad M_2^\intercal M_4 = M_4^\intercal M_2 , \quad \quad M_1^\intercal M_4 = I + M_3^\intercal M_2.
\end{align} 
Using this, one now computes that
 \begin{align*}
&\lp 
\begin{matrix}
\mathcal{M}(1, \0, \0, M_1 ) & \mathcal{M}(0, \0, \0, M_2 ) \\
\mathcal{M}(0, \0, \0, M_3 ) & \mathcal{M}(1, \0, \0, M_4 ) 
\end{matrix}
\rp^\intercal \lp \begin{matrix}
0 & I \\
-I & 0 
\end{matrix} \rp 
\lp 
\begin{matrix}
\mathcal{M}(1, \0, \0, M_1 ) & \mathcal{M}(0, \0, \0, M_2 ) \\
\mathcal{M}(0, \0, \0, M_3 ) & \mathcal{M}(1, \0, \0, M_4 ) 
\end{matrix}
\rp 
\\[+1em] &\quad \quad = 
\lp 
\begin{matrix}
\mathcal{M}(0, \0, \0, -M_3^\intercal ) & \mathcal{M}(1, \0, \0, M_1^\intercal ) \\
\mathcal{M}(-1, \0, \0, -M_4^\intercal ) & \mathcal{M}(0, \0, \0, M_2^\intercal ) 
\end{matrix}
\rp
\lp 
\begin{matrix}
\mathcal{M}(1, \0, \0, M_1 ) & \mathcal{M}(0, \0, \0, M_2 ) \\
\mathcal{M}(0, \0, \0, M_3 ) & \mathcal{M}(1, \0, \0, M_4 ) 
\end{matrix}
\rp \\[+1em]
&\quad \quad = \lp
\begin{matrix}
\mathcal{M} (0, \0, \0, -M_3^\intercal M_1 + M_1^\intercal M_3 ) & \mathcal{M} (1, \0, \0, -M_3^\intercal M_2 + M_1^\intercal M_4 ) \\
\mathcal{M} (-1, \0, \0, -M_4^\intercal M_1 + M_2^\intercal M_3 ) & \mathcal{M} (0, \0, \0, -M_4^\intercal M_2+  M_2^\intercal M_4 )
\end{matrix} \rp \\[+1em]
&\quad \quad = \lp
\begin{matrix}
0 & \mathcal{M}(1, \0, \0, I) \\
\mathcal{M}(-1, \0, \0, -I) & 0
\end{matrix} \rp = \lp \begin{matrix}
0 &I \\
-I & 0
\end{matrix} \rp.
\end{align*} 
This proves the second claim. 
\end{proof}
Finally, we also need the following lemma. 
\begin{lemma}\label{diagonallemma}
Suppose that $g \in \SP (2d, \R)$ has the Iwasawa decomposition $g = k a n$ where $k \in {\bf K}$, $a = \mathrm{diag} \lp a_1, a_2, \ldots \rp \in {\bf A}$, and $n \in {\bf N}$. If $\Vert g \textbf{e}_1 \Vert \leqslant \Vert g \textbf{v} \Vert$ for any $\textbf{v} \in \Z^{2d} \setminus \lbrace \0 \rbrace$, then we have $a_1 / a_2 \leqslant 2 / \sqrt{3}$.
\end{lemma}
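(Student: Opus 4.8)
The plan is to apply the minimality hypothesis against one carefully chosen lattice vector and read off the desired inequality between $a_1$ and $a_2$. The key observation is that the unipotent factor $n$ acts simply on $\textbf{e}_1$ and $\textbf{e}_2$: writing $n = \lp \begin{smallmatrix} N & M \\ 0 & N^{-\intercal} \end{smallmatrix} \rp$ with $N$ unipotent and upper-triangular, the first column of $n$ is $\textbf{e}_1$ and the second column is $\textbf{e}_2 + c\,\textbf{e}_1$, where $c$ is the $(1,2)$-entry of $N$. Hence $n\textbf{e}_1 = \textbf{e}_1$ and $n\textbf{e}_2 = \textbf{e}_2 + c\,\textbf{e}_1$. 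Applying $a = \diag(a_1, \ldots, a_d, a_1^{-1}, \ldots, a_d^{-1})$ followed by $k \in {\bf K}$, we get $g\textbf{e}_1 = k a \textbf{e}_1 = a_1 k\textbf{e}_1$, whence $\Vert g\textbf{e}_1 \Vert = a_1$ (recall $a_1 > 0$ and $k$ is an isometry of the Euclidean norm).

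Next I would take as competing vector $\textbf{v} := \textbf{e}_2 - m\,\textbf{e}_1$, where $m \in \Z$ is a nearest integer to $c$, so that $|c - m| \leqslant 1/2$; this $\textbf{v}$ is a nonzero element of $\Z^{2d}$. Then $n\textbf{v} = \textbf{e}_2 + (c - m)\textbf{e}_1$, so $an\textbf{v} = a_2 \textbf{e}_2 + (c - m) a_1 \textbf{e}_1$, and since $k$ is orthogonal and $\textbf{e}_1, \textbf{e}_2$ are orthonormal, $\Vert g\textbf{v} \Vert^2 = a_2^2 + (c - m)^2 a_1^2 \leqslant a_2^2 + \tfrac{1}{4}a_1^2$. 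The hypothesis $\Vert g\textbf{e}_1 \Vert \leqslant \Vert g\textbf{v} \Vert$ then gives $a_1^2 \leqslant a_2^2 + \tfrac{1}{4}a_1^2$, i.e. $\tfrac{3}{4}a_1^2 \leqslant a_2^2$, which is exactly $a_1/a_2 \leqslant 2/\sqrt{3}$.

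I do not expect a genuine obstacle here: this is the standard first step in the construction of a Siegel set, where minimality of the first basis vector forces the leading diagonal ratio to be small. The only point requiring a little care is that the hypothesis must be applied to $\textbf{v} = \textbf{e}_2 - m\,\textbf{e}_1$ and not to $\textbf{e}_2$ directly, since $c$ itself is a priori unbounded; it is the reduction of $c$ modulo $\Z$ — permissible because $\textbf{e}_1 \in \Z^{2d}$ — that supplies the bound $|c - m| \leqslant 1/2$ on which the whole estimate rests.
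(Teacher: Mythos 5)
Your proof is correct and takes essentially the same approach as the paper: the paper cites Bekka--Mayer's Lemmas V.1.5 and V.1.6, which amount to precisely the comparison you carry out directly — reduce $n_{12}$ modulo $\Z$ (i.e.\ test the minimality hypothesis against $\textbf{v} = \textbf{e}_2 - m\,\textbf{e}_1$ with $|n_{12}-m|\leqslant 1/2$) and read off $a_1^2 \leqslant a_2^2 + \tfrac14 a_1^2$. Your version simply unpacks the cited lemmas into a self-contained computation.
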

\begin{proof}
This is an immediate consequence of \cite[Lemma V.1.6]{bekkamayer}: Instead of ensuring that $\left|n_{ij} \right| \leqslant 1/2$ for all $1 \leqslant i < j \leqslant 2d$, we only have to ensure $\left| n_{12} \right| \leqslant 1/2$. This can be done in the symplectic case with the help of \cite[Lemma V.1.5]{bekkamayer} applied to the upper leftmost block $N$ in the block decomposition $n = \lp \begin{smallmatrix} N & M \\ 0 & N^{-\intercal} \end{smallmatrix} \rp$ of $n$.
\end{proof}
We are now ready to prove Proposition \ref{siegeldroid} with $u = m(d)$, where $m(d)$ is defined in Lemma \ref{tooktoolong}.
\begin{proof}[Proof of Proposition \ref{siegeldroid}]
If $d = 1$, the result follows from \cite[Thm. V.1.7]{bekkamayer} since then the symplectic group coincides with the special linear group. We now proceed by induction in $d$. \par 
Assume that $d \geqslant 2$ and that the claim holds for $\SP (2d-2, \R)$. Now, let $g \in \SP(2d, \R)$, and let $\textbf{v}_0 \in \Z^{2d} \setminus \lbrace 0 \rbrace$ be a primitive vector with the property that $g \textbf{v}_0$ is a shortest non-zero vector in the lattice $g \Z^{2d}$. Since $\SP (2d, \Z)$ acts transitively on the set of primitive vectors in $\Z^{2d}$ (see e.g. \cite{mosko}), there is $\gamma \in \SP (2d, \Z)$ such that $\gamma \textbf{e}_1 = \textbf{v}_0$. Then with $g' = g \gamma$, we have
 \begin{align}\label{maybeneededlater}
\Vert g' \textbf{e}_1 \Vert = \Vert g \gamma \textbf{e}_1 \Vert = \Vert g \textbf{v}_0 \Vert \leqslant \Vert g \textbf{v} \Vert,
\end{align} 
for any non-zero $\textbf{v} \in \Z^{2d}$. \par 
We claim that, in order to finish the proof, it is enough to find $\gamma' \in \Gamma$ such that $a' n' \gamma' \in {\bf K} {\bf A}_t {\bf N}$ where $k' a' n' = g'$ is the Iwasawa decomposition of $g'$. Indeed, according to Lemma \ref{tooktoolong}, we will then have
 \begin{align*}
a' n' \in {\bf K} {\bf A}_t {\bf N}_{m(d)} {\bf N}_{\Z} \lp \gamma ' \rp^{-1} \subset {\bf K} {\bf A}_t {\bf N}_{m(d)} \SP (2d, \Z),
\end{align*} 
and hence $g = g' \gamma^{-1} \in {\bf K} {\bf A}_t {\bf N}_{m(d)} \SP (2d, \Z)$.\par 
If $a' = \text{diag}(d_1, d_2, \ldots)$, then we can write
 \begin{align*}
h = a' n' = \lp 
\begin{matrix}
\mathcal{M}\lp d_1, \textbf{x}_1, {\bf 0}, E_0 \rp & \mathcal{M}\lp \lambda, \textbf{y}_1, \textbf{y}_2,F_0 \rp \\
0 & \mathcal{M} \lp d_1^{-1}, {\bf 0}, \textbf{x}_2, E_0^{-\intercal} \rp
\end{matrix} \rp
\end{align*} 
where the symplecticity of $h$ forces the relations
 \begin{align}
\textbf{x}_2 &= -d_1^{-1}E_0^{-\intercal} \textbf{x}_1, \label{thing1}\\
\textbf{y}_2 &= d_1^{-1} \lp E_0 \textbf{y}_1 - F_0 \textbf{x}_1 \rp \label{thing2}.
\end{align} 
It follows from Lemma \ref{symplecticlemma} that 
 \begin{align*}
\lp \begin{matrix}
E_0 &F_0 \\
0 & E_0^{-\intercal} 
\end{matrix} \rp \in \SP (2d-2, \R ).
\end{align*} 
By the inductive hypothesis, we can therefore find 
 \begin{align*}
\gamma '' = \lp \begin{matrix}
\Gamma_1 & \Gamma_2 \\
\Gamma_3 & \Gamma_4
\end{matrix} \rp \in \SP (2d-2, \Z)
\end{align*} 
such that
 \begin{align*}
\lp \begin{matrix}
E_0 &F_0 \\
0 & E_0^{-\intercal} 
\end{matrix} \rp \gamma '' \in {\bf K} {\bf A}_t {\bf N}_{m(d-1)}.
\end{align*} 
We then augment $\gamma ''$ to a $2d \times 2d$ integer matrix $\gamma' $ by letting
 \begin{align*}
\gamma ' := \lp \begin{matrix}
\mathcal{M} (1, {\bf 0}, {\bf 0}, \Gamma_1 ) &\mathcal{M} (0, {\bf 0}, {\bf 0}, \Gamma_2 ) \\
\mathcal{M} (0, {\bf 0}, {\bf 0}, \Gamma_3 ) &\mathcal{M} (1, {\bf 0}, {\bf 0}, \Gamma_4 )
\end{matrix} \rp.
\end{align*}  
By Lemma \ref{symplecticlemma}, $\gamma '$ is symplectic. A computation now shows that
 \begin{align}\label{hellisheavencomparedtothis}
h \gamma' = \lp \begin{matrix}
\mathcal{M} \Bigl( d_1, \Gamma_1^\intercal \textbf{x}_1 + \Gamma_3^\intercal \textbf{y}_1, {\bf 0}, E_0 \Gamma_1 + F_0 \Gamma_3 \Bigr) & \mathcal{M} \Bigl( \lambda, \Gamma_2^\intercal \textbf{x}_1 + \Gamma_4^\intercal \textbf{y}_1, \textbf{y}_2, E_0 \Gamma_2 + F_0 \Gamma_4 \Bigr) \\
\mathcal{M} \lp 0, {\bf 0}, {\bf 0}, E_0^{-\intercal} \Gamma_3 \rp & \mathcal{M} \lp d_1^{-1}, {\bf 0}, \textbf{x}_2, E_0^{-\intercal} \Gamma_4 \rp
\end{matrix}
\rp.
\end{align} 
\par Let us suppose that $\lp \begin{smallmatrix}
E_0 &F_0 \\
0 & E_0^{-\intercal} 
\end{smallmatrix} \rp \gamma ''$ has the Iwasawa decomposition $k'' a'' n''$ with
 \begin{align*}
k'' = \lp \begin{matrix}
K_1 & K_2 \\
K_3 & K_4
\end{matrix} \rp, \quad a'' = \lp  \begin{matrix}
A & 0\\
0 & A^{-1} 
\end{matrix} \rp, \quad n'' = \lp \begin{matrix}
N_1 &M_1 \\
0 & N_1^{-\intercal}
\end{matrix}  \rp .
\end{align*} 
We will now augment $k''$, $a''$, and $n''$ to symplectic $(2d) \times (2d)$ matrices $\tilde{k}$, $\tilde{a}$, and $\tilde{n}$ so that $\tilde{k} \tilde{a} \tilde{n} = h \gamma '$ is the Iwasawa decomposition of $h \gamma'$. To this end, we let
 \begin{align*}
\tilde{k} &:= \lp \begin{matrix}
\mathcal{M} (1, {\bf 0}, {\bf 0}, K_1 ) & \mathcal{M}(0, {\bf 0}, {\bf 0}, K_2) \\
\mathcal{M} (0, {\bf 0}, {\bf 0}, K_3 ) & \mathcal{M} (1, {\bf 0}, {\bf 0}, K_4 )
\end{matrix}
\rp, \\[+1em]
\tilde{a} &:= \lp
\begin{matrix}
\mathcal{M} (d_1, {\bf 0}, {\bf 0}, A ) & 0 \\
0 & \mathcal{M} \lp d_1^{-1}, {\bf 0}, {\bf 0}, A^{-1} \rp
\end{matrix}
\rp,
\end{align*} 
and, for some suitable $\textbf{v}_1, \textbf{w}_1 \in \R^{d-1}$,
 \begin{align*}
\tilde{n} :=  \lp 
\begin{matrix}
\mathcal{M} \lp 1, \textbf{v}_1, {\bf 0}, N_1 \rp & \mathcal{M} \lp d_1^{-1} \lambda, \textbf{w}_1, N_1 \textbf{w}_1 - M_1 \textbf{v}_1, M_1 \rp \\
0 & \mathcal{M} \lp 1, {\bf 0}, -N_1^{-\intercal} \textbf{v}_1, N_1^{-\intercal} \rp
\end{matrix}
\rp.
\end{align*} 
Here it is clear that $\tilde{a}$ is symplectic, and from Lemma \ref{symplecticlemma} we see that also $\tilde{k}$ is. To see that even $\tilde{n}$ is symplectic, we need to verify that the lower rightmost block in $\tilde{n}$ is the inverse-transpose of its upper leftmost block, which is clear; and moreover that we have the identity
 \begin{align}\label{morestufftocheck}
\nonumber &\mathcal{M} \lp 1, \textbf{v}_1, {\bf 0}, N_1 \rp \mathcal{M} \lp d_1^{-1} \lambda, \textbf{w}_1, N_1 \textbf{w}_1 - M_1 \textbf{v}_1, M_1 \rp^\intercal 
\\ &\quad \quad = \mathcal{M} \lp d_1^{-1} \lambda, \textbf{w}_1, N_1 \textbf{w}_1 - M_1 \textbf{v}_1, M_1 \rp \mathcal{M} \lp 1, \textbf{v}_1, {\bf 0}, N_1 \rp^\intercal.
\end{align} 
This follows immediately from the fact that $N_1 M_1^\intercal = M_1 N_1^\intercal$. Using the fact that $\smash{\lp \begin{smallmatrix}
E_0 &F_0 \\
0 & E_0^{-\intercal} 
\end{smallmatrix} \rp \gamma ''} = k'' a'' n''$, we then obtain that 
 \begin{align*}
\tilde{k} \tilde{a} \tilde{n} = 
\lp 
\begin{matrix}
\mathcal{M} \lp d_1, d_1 \textbf{v}_1, {\bf 0}, E_0 \Gamma_1 + F_0 \Gamma_3 \rp & \mathcal{M} \lp \lambda, d_1 \textbf{w}_1, \textbf{u}_1 , E_0 \Gamma_2 + F_0 \Gamma_4 \rp \\
\mathcal{M} \lp 0, {\bf 0}, {\bf 0}, E_0^{-\intercal} \Gamma_3 \rp & \mathcal{M} \lp d_1^{-1}, {\bf 0}, \textbf{u}_2, E_0^{-\intercal} \Gamma_4 \rp
\end{matrix}
\rp
\end{align*} 
with
 \begin{align}
\textbf{u}_1 &= - (E_0 \Gamma_2 + F_0 \Gamma_4 ) \textbf{v}_1 + (E_0 \Gamma_1 + F_0 \Gamma_3) \textbf{w}_1, \label{homealone1}\\
\textbf{u}_2 &= -E_0^{-\intercal}\Gamma_4 \textbf{v}_1 + E_0^{-\intercal} \Gamma_3 \textbf{w}_1. \label{homealone2}
\end{align} 
By (\ref{thing1}), (\ref{thing2}), and (\ref{hellisheavencomparedtothis}) the matrix $\tilde{k} \tilde{a} \tilde{n}$ equals $h \gamma'$ precisely if we have the relations
 \begin{align}
\textbf{v}_1 &= d_1^{-1} \lp \Gamma_1^\intercal \textbf{x}_1 + \Gamma_3^\intercal \textbf{y}_1 \rp, \label{psydub01} \\
\textbf{w}_1 &= d_1^{-1} \lp \Gamma_2^\intercal \textbf{x}_1 + \Gamma_4^\intercal \textbf{y}_1 \rp, \label{psydub02} \\
\textbf{u}_1 &= d_1^{-1} \lp - F_0 \textbf{x}_1 + E_0 \textbf{y}_1  \rp, \label{psydub03} \\
\textbf{u}_2 &= -d_1^{-1}E_0^{-\intercal} \textbf{x}_1. \label{psydub04}
\end{align} 
Since we are free to choose $\textbf{v}_1$ and $\textbf{w}_1$, we simply take (\ref{psydub01}) and (\ref{psydub02}) as definitions. It then follows from (\ref{homealone1}) and (\ref{homealone2}) that (\ref{psydub03}) and (\ref{psydub04}) are satisfied if
 \begin{align*}
\Gamma_4 \Gamma_1^\intercal - \Gamma_3 \Gamma_2^\intercal = I, \quad \quad \Gamma_1 \Gamma_2^\intercal = \Gamma_2 \Gamma_1^\intercal, \quad \quad \Gamma_3 \Gamma_4^\intercal = \Gamma_4 \Gamma_3^\intercal .
\end{align*} 
These relations follow immediately from the fact that $\lp \gamma '' \rp^\intercal$ is symplectic because $\gamma''$ is, and we conclude that with the above choices of $\textbf{v}_1$ and $\textbf{w}_1$, $\tilde{k} \tilde{a} \tilde{n}$ is, indeed, the Iwasawa decomposition of $h \gamma '$.
\par It follows from the induction hypothesis that the entries $a_1, a_2, \ldots, a_{d-1}$ of the diagonal matrix $A$ satisfy the inequality $a_i / a_{i+1} \leqslant 2 / \sqrt{3}$ for $i = 1, \ldots, d-2$. Hence, in order to conclude the proof we must show that $d_1 / a_1 \leqslant 2 / \sqrt{3}$. This can easily be accomplished by appealing to Lemma \ref{diagonallemma}: The matrix $\gamma '$ fixes $\textbf{e}_1$, and therefore, for any $\textbf{v} \in \Z^{2d} \setminus \lbrace 0 \rbrace$,
 \begin{align*}
\lev h \gamma ' \textbf{e}_1 \rev = \lev g' \textbf{e}_1 \rev \leqslant \lev g' \textbf{v} \rev = \lev h \textbf{v} \rev
\end{align*} 
due to (\ref{maybeneededlater}). Replacing $\textbf{v}$ by $\gamma ' \textbf{v} \in \Z^{2d} \setminus \lbrace 0 \rbrace$, we obtain that 
 \begin{align*}
\lev h \gamma ' \textbf{e}_1 \rev \leqslant \lev h \gamma ' \textbf{v} \rev
\end{align*} 
for any non-zero $\textbf{v} \in \Z^{2d}$. By Lemma \ref{diagonallemma}, we therefore have $d_1 / a_1 \leqslant 2 / \sqrt{3}$, and so the proof is concluded.
\end{proof}
\end{appendices}
\vspace{1cm}
\textsc{Acknowledgements.} The author wishes to thank Michael Björklund and Anders Södergren for suggesting the problem and for many illuminating discussions. Moreover, the author is very grateful to the two anonymous referees for their detailed comments and suggestions to a previous version of this article.

\hrulefill 
$ $ \vspace{0.5cm}
\begin{adjustwidth}{}{}
\begin{small}
\textit{Email address:} {\tt holm@math.uni-kiel.de}
\end{small}
\end{adjustwidth}

\end{document}